\pdfoutput=1
\documentclass[11pt]{amsart}
\usepackage[T1]{fontenc}
\usepackage{amsmath}
\usepackage{amssymb}
\usepackage{amsthm}
\usepackage{pdfpages}
\usepackage{textcomp}
\usepackage{graphicx}
\usepackage{tikz}
\usetikzlibrary{patterns}
\textwidth 16cm
\oddsidemargin 0cm
\evensidemargin 0cm
\usepackage{enumerate}
\usepackage[justification=centering]{caption}

\newtheorem{theorem}{Theorem}[section]
\newtheorem{definitio}[theorem]{Definition}

\newtheorem{lemma}[theorem]{Lemma}
\newtheorem{proposition}[theorem]{Proposition}
\newtheorem{corollary}[theorem]{Corollary}

\newtheorem{question}[theorem]{Question}
\newtheorem*{lem}{Lemma}

\newtheorem*{theo}{Theorem}

\newcommand{\Z}{\mathbb{Z}}

\newcommand{\OO}{\mathcal{O}}

\newcommand{\lk}{\textrm{\textnormal{lk}}}
\newcommand{\Int}{\mathrm{Int}}
\newcommand{\Arf}{\mathrm{Arf}}
\newcommand{\sd}{s_{\scriptscriptstyle{\Delta}}}
\newcommand{\rd}{r_{\scriptscriptstyle{\Delta}}}

\newcommand{\draww}[1]{\draw[white,line width=5pt] #1 \draw #1}
\newcommand{\tour}{ 
 \draw[very thick] (0,2) arc (90:270:2) -- (1.5,-2);
 \draw[very thick,dashed] (1.5,-2) node[above] {$\partial\Sigma$} -- (2.5,-2) (0,2) -- (1,2);}

\definecolor{vert}{RGB}{0,205,0}

\title{Slice genus, $T$--genus and $4$--dimensional clasp number}
\author{Delphine Moussard}

\begin{document}

\begin{abstract}
 The $T$--genus of a knot is the minimal number of borromean-type triple points on a normal singular disk with no clasp bounded by the knot; it is an upper bound for the slice genus. Kawauchi, Shibuya and Suzuki characterized the slice knots by the vanishing of their $T$--genus. We generalize this to provide a $3$--dimensional characterization of the slice genus. Further, we prove that the $T$--genus majors the $4$--dimensional positive clasp number and we deduce that the difference between the $T$--genus and the slice genus can be arbitrarily large. We introduce the ribbon counterpart of the $T$--genus and prove that it is an upper bound for the ribbon genus. Interpreting the $T$--genera in terms of $\Delta$--distance, we show that the $T$--genus and the ribbon $T$--genus coincide for all knots if and only if all slice knots are ribbon. We work in the more general setting of algebraically split links and we also discuss the case of colored links. Finally, we express Milnor's triple linking number of an algebraically split $3$--component link as the algebraic intersection number of three immersed disks bounded by the three components.
\end{abstract}

\maketitle

\tableofcontents

\section{Introduction}

The first attempts to prove the ribbon--slice conjecture led to a 3--dimensional characterization of slice knots. Kawauchi, Shibuya and Suzuki \cite{KSS} proved that slice knots are exactly those that bound a normal singular disk in the 3--sphere with no clasp and no triple point of a certain type, called here borromean. We generalize this result to a 3--dimensional characterization of the slice genus, proving that the slice genus of a knot is the minimal genus of a normal singular surface in the 3--sphere, with no clasp and no borromean triple point, bounded by the knot.

It was proved by Kaplan \cite{Kaplan} that any knot bounds a normal singular disk with no clasp. This allowed Murakami and Sugishita \cite{MS} to define the $T$--genus of a knot as the minimal number of borromean triple points on such a disk. They proved that the $T$--genus is a concordance invariant and an upper bound for the slice genus. Further, they showed that the mod--$2$ reduction of the $T$--genus coincides with the Arf invariant. From these properties, they deduced the value of the $T$--genus for several knots for which the difference between the $T$--genus and the slice genus is 0 or 1. This raises the question of wether this difference can be greater than one; we prove that it can be arbitrarily large. For this, we show that the $T$--genus is an upper bound for the 4--dimensional positive clasp number and we use a recent result of Daemi and Scaduto \cite{DaSca} that states that the difference between the 4--dimensional positive clasp number and the slice genus can be arbitrarily large. 

We introduce the ribbon $T$--genus of a knot, defined as the minimal number of borromean triple points on an immersed disk bounded by the knot with no clasp and no non-borromean triple point. In \cite{KMS}, Kawauchi, Murakami and Sugishita proved that the $T$--genus of a knot equals its $\Delta$--distance to the set of slice knots. We prove the ribbon counterpart of it, namely that the ribbon $T$--genus of a knot equals its $\Delta$--distance to the set of ribbon knots. As a consequence, providing a knot with distinct $T$--genus and ribbon $T$--genus would imply the existence of a non-ribbon slice knot.

We generalize the definition and properties of the $T$--genus to algebraically split links. In addition, we express Milnor's triple linking number of an algebraically split 3--component link as the algebraic intersection of three disks bounded by the three components, that intersect only along ribbons and borromean triple points. We also give an elementary proof, in the setting of non-split links, that the difference between the $T$--genus and the slice genus can be arbitrarily large, computing the $T$--genus on a family of cabled borromean links. Finally, we discuss the case of colored links.

\vspace{1ex}

\noindent\textbf{Conventions.}
We work in the smooth category. All manifolds are oriented. Boundaries of oriented manifolds are oriented using the ``outward normal first'' convention. 

\vspace{1ex}

\noindent\textbf{Acknowledgments.}
I wish to thank Emmanuel Wagner for motivating conversations and Jae Choon Cha for an interesting suggestion.

\section{Definitions and main statements}

\begin{figure}[htb] 
\begin{center}
\begin{tikzpicture} [scale=0.6]
\begin{scope} [xshift=-5cm]
 \draw[fill=gray!40,color=gray!40] (0,0) circle (2 and 1);
 \draw[fill=gray!70,color=gray!70] (-0.5,0) -- (-0.5,2) -- (0.5,2) -- (0.5,0);
 \draw[fill=gray!70,color=gray!70] (0.5,-2) -- (0.5,-0.99) -- (0,-1) -- (-0.5,-0.99) -- (-0.5,-2);
 \foreach \x in {-0.5,0.5} {
 \draw (\x,0) -- (\x,2);
 \draw (\x,-0.98) -- (\x,-2);
 \draw[dashed] (\x,-0.95) -- (\x,-0.05);}
 \draw[color=gray!40] (-2,0) arc (-180:0:2 and 1);
\end{scope}
\begin{scope} [scale=0.8]
 \tour
 \draw (-0.3,-1.3) -- (-0.5,-0.2);
 \draw (-0.4,-0.7) node[right] {$i$};
 \draw[rotate=240] (-1.43,-1.4) -- (1.43,-1.4);
 \draw (-1,1) node[right] {$b$};
\end{scope}
 \draw (-2.5,-2.8) node {ribbon};
\begin{scope}[xshift=6.5cm,yscale=0.5,xscale=0.7]
 \draw[fill=gray!40] (-2,-3) arc (-90:90:3);
 \draw[fill=white] (2,3) arc (90:270:3);
 \draw[fill=gray!60] (2,3) arc (90:270:3);
 \draw[fill=white] (-2,-3) arc (-90:0:3) -- (-2,0);
 \draw[gray!40,fill=gray!40] (-2,-3) arc (-90:0:3) -- (-2,0);
 \draw (-2,-3) arc (-90:0:3);
 \draw[gray!40,line width=2pt] (-2,0) -- (-1.01,0);
\end{scope}
\begin{scope} [xshift=11cm,scale=0.8]
 \tour
 \draw (-1.5,-1.3) -- (-0.5,-0.5);
 \draw (-1.5,1.3) -- (-0.5,0.5);
\end{scope}
 \draw (8.7,-2.8) node {clasp};
\end{tikzpicture}
\end{center}
\caption{Lines of double points and their preimages} 
\label{figribbonclasp}
\end{figure}
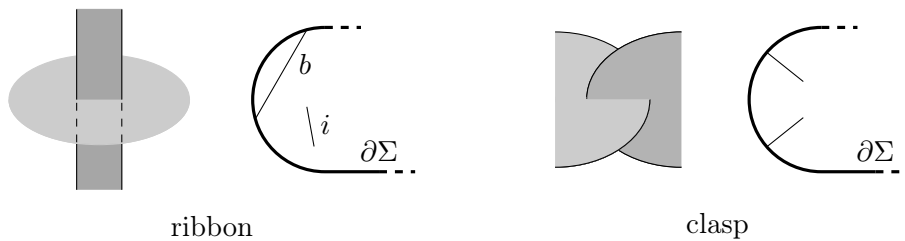

If $\Sigma$ is a compact surface immersed in $S^3$, the self-intersections of $\Sigma$ are lines of double points, which possibly intersect along triple points. The lines of double points are circles or intervals; the latter are of two kinds: ribbons and clasps (see Figure~\ref{figribbonclasp}). A {\em ribbon} is a line of double points whose preimages by the immersion are a {\em $b$--line} properly immersed in $\Sigma$ and an {\em $i$--line} immersed in the interior of $\Sigma$. A {\em clasp} is a line of double points that is not a ribbon. 
When three ribbons meet at a triple point, there are again two possibilities. We say that the triple point is {\em borromean} if its three preimages are intersections of a $b$--line and an $i$--line (see Figures~\ref{figtriplepoints} and~\ref{fig(non)Borromean}). We will also consider surfaces with {\em branch points}, namely points that have a neighborhood as represented in Figure~\ref{figbranchedpt}.

\begin{figure}[htb]
\begin{center}
\begin{tikzpicture} [scale=0.6]
 \draw (0,0) circle (2);
 \draw[red] (-1.43,-1.4) -- (1.43,-1.4);
 \draw[green] (0,-1.8) -- (0,-1);
 \draw[rotate=120,green] (-1.43,-1.4) -- (1.43,-1.4);
 \draw[rotate=120,blue] (0,-1.8) -- (0,-1);
 \draw[rotate=240,blue] (-1.43,-1.4) -- (1.43,-1.4);
 \draw[rotate=240,red] (0,-1.8) -- (0,-1);
 \draw (0,-1.4) node[above right] {$\scriptstyle{p_1}$};
 \draw (-1,0.2) node {$\scriptstyle{p_2}$};
 \draw (1,0.2) node {$\scriptstyle{p_3}$};
 \draw (0,-2.8) node {borromean};
 \begin{scope} [xshift=6cm]
  \draw (0,0) circle (2);
  \draw[red] (-1.43,-1.4) -- (1.43,-1.4);
  \draw [blue] (0,-1.8) -- (0,-1);
  \draw[rotate=240,blue] (-1.43,-1.4) -- (1.43,-1.4);
  \draw[green] (-1.43,1.4) -- (1.43,1.4);
  \draw[red] (-0.3,0) -- (0.5,0); 
  \draw[green] (0.1,-0.4) -- (0.1,0.4);
  \draw (0,-1.4) node[above right] {$\scriptstyle{p_1}$};
  \draw (-0.6,0.9) node {$\scriptstyle{p_2}$};
  \draw (0.6,0.3) node {$\scriptstyle{p_3}$};
  \draw (0,-2.8) node {non borromean};
 \end{scope}
\end{tikzpicture}
\caption{Triple points on a disk\vspace{0.8ex}\\{\footnotesize The picture represents the singular set of the disk on its preimage.\\ The points $p_1$, $p_2$ and $p_3$ are the three preimages of a triple point $p$.}} \label{figtriplepoints}
\end{center}
\end{figure}

\begin{figure}[htb] 
\begin{center}
\begin{tikzpicture} [scale=0.25]
\begin{scope}
 %Surfaces
 \draw[blue!10,fill=blue!10] (-6.5,-2.1) -- (-6.5,3) .. controls +(0,5) and +(-5,0) .. (0,8.5) .. controls +(5,0) and +(0,5) .. (6.5,3) -- (6.5,-2.1) .. controls +(0,-5) and +(5,0) .. (0,-8.5) .. controls +(-5,0) and +(0,-5) .. (-6.5,-2.1); 
 \draw[green,fill=green!20] (0,-6) .. controls +(2,-2) and +(0,-4) .. (3,-3) .. controls +(0,5) and +(3,-3) .. (0,6); 
 \draw[red!30,fill=red!30] (-6.5,2) .. controls +(-3,0) and +(0,2) .. (-10,0) .. controls +(0,-2) and +(-2,0) .. (-6.5,-2) -- (2,-2) -- (0,0) -- (-6.5,0) -- (-6.5,2);
 \draw[red!30,fill=red!30] (6.5,2) .. controls +(3,0) and +(0,2) .. (10,0) .. controls +(0,-2) and +(2,0) .. (6.5,-2) --(3,-2) -- (3,0) -- (6.5,0);
 %Noeuds
 \draw[green,dashed] (0,-6) .. controls +(-3,3) and +(0,-5) .. (-3,3) .. controls +(0,4) and +(-2,2) .. (0,6); 
 \draw[red,dashed] (-6,2) -- (6,2) (2.2,-2) -- (2.5,-2);
 \draw[blue,dashed] (-6.5,-1.9) -- (-6.5,0) (6.5,0) -- (6.5,-1.9);
 \draw[red] (-7.1,2) .. controls +(-2,0) and +(0,2) .. (-10,0) .. controls +(0,-2) and +(-2,0) .. (-6.5,-2) -- (2,-2) (3.5,-2) -- (6.5,-2) .. controls +(2,0) and +(0,-2) .. (10,0) .. controls +(0,2) and +(2,0) .. (7.1,2);
 \draw[blue] (-6.5,-2.1) .. controls +(0,-5) and +(-5,0) .. (0,-8.5) ..controls +(5,0) and +(0,-5) .. (6.5,-2.1) (-6.5,0) -- (-6.5,3) .. controls +(0,5) and +(-5,0) .. (0,8.5) .. controls +(5,0) and +(0,5) .. (6.5,3) -- (6.5,0);
 \draw[green] (0,-6) .. controls +(2,-2) and +(0,-4) .. (3,-3) .. controls +(0,5) and +(3,-3) .. (0,6); 
 %Rubans
 \draw[gray,densely dashed,thick] (2,-2) -- (-2,2) (0,-6) -- (0,6) (-6.5,0) -- (6.5,0);
\end{scope}
\begin{scope} [xshift=25cm]
 %Surfaces
 \draw[green,fill=green!20] (0,-8) .. controls +(-4,4) and +(0,-6) .. (-3,3) .. controls +(0,5) and +(-3,3) .. (0,8); 
 \draw[blue!10,fill=blue!10] (-6.5,-2.1) -- (-6.5,3) .. controls +(0,3) and +(-5,0) .. (0,6.5) .. controls +(5,0) and +(0,3) .. (6.5,3) -- (6.5,-2.1) .. controls +(0,-3) and +(5,0) .. (0,-6.5) .. controls +(-5,0) and +(0,-3) .. (-6.5,-2.1); 
 \draw[green,fill=green!20] (0,-8) .. controls +(3,-3) and +(0,-5) .. (3,-3) .. controls +(0,6) and +(4,-4) .. (0,8); 
 \draw[red!30,fill=red!30] (-6.5,2) .. controls +(-3,0) and +(0,2) .. (-10,0) .. controls +(0,-2) and +(-2,0) .. (-6.5,-2) -- (2,-2) -- (0,0) -- (-6.5,0) -- (-6.5,2);
 \draw[red!30,fill=red!30] (6.5,2) .. controls +(3,0) and +(0,2) .. (10,0) .. controls +(0,-2) and +(2,0) .. (6.5,-2) --(3,-2) -- (3,0) -- (6.5,0);
 %Noeuds
 \draw[green,dashed] (0,-8) .. controls +(-4,4) and +(0,-6) .. (-3,3) .. controls +(0,5) and +(-3,3) .. (0,8); 
 \draw[red,dashed] (-6,2) -- (6,2) (2.2,-2) -- (2.5,-2);
 \draw[blue,dashed] (-6.5,-1.9) -- (-6.5,0) (6.5,0) -- (6.5,-1.9) (0.1,-6.5) ..controls +(5,0) and +(0,-3) .. (6.5,-2.1) (0,6.5) .. controls +(5,0) and +(0,3) .. (6.5,3);
 \draw[red] (-7.1,2) .. controls +(-2,0) and +(0,2) .. (-10,0) .. controls +(0,-2) and +(-2,0) .. (-6.5,-2) -- (2,-2) (3.5,-2) -- (6.5,-2) .. controls +(2,0) and +(0,-2) .. (10,0) .. controls +(0,2) and +(2,0) .. (7.1,2);
 \draw[blue] (-6.5,-2.1) .. controls +(0,-3) and +(-5,0) .. (0,-6.5) (3,-6.28) ..controls +(3,0.55) and +(0,-1.9) .. (6.5,-2.1) (-6.5,0) -- (-6.5,3) .. controls +(0,3) and +(-5,0) .. (0,6.5) (1.5,6.48) .. controls +(3.5,-0.1) and +(0,2.85) .. (6.5,3) -- (6.5,0);
 %\draw[gray!20] (-6.5,-2.1) .. controls +(0,-3) and +(-5,0) .. (0,-6.5) ..controls +(5,0) and +(0,-3) .. (6.5,-2.1) (-6.5,0) -- (-6.5,3) .. controls +(0,3) and +(-5,0) .. (0,6.5) .. controls +(5,0) and +(0,3) .. (6.5,3) -- (6.5,0);
 \draw[green] (0,-8) .. controls +(3,-3) and +(0,-5) .. (3,-3) .. controls +(0,6) and +(4,-4) .. (0,8); 
 %Rubans
 \draw[gray,densely dashed,thick] (2,-2) -- (-2,2) (0,-6.5) -- (0,6.5) (-6.5,0) -- (6.5,0);
\end{scope}
\end{tikzpicture}
\end{center}
\caption{Borromean and non borromean triple points\vspace{0.8ex}\\{\footnotesize On the left hand side (resp. right hand side), a borromean link (resp. a trivial link) bounds a disks complex with three ribbons and one borromean (resp. non borromean) triple point}} \label{fig(non)Borromean}
\end{figure} 

\begin{samepage}
A compact surface is:
\begin{itemize}
 \item {\em normal singular} if it is immersed in $S^3$ except at a finite number of branch points,
 \item {\em ribbon} if it is immersed in $S^3$ with no clasp and no triple point, 
 \item {\em $T$--ribbon} if it is immersed in $S^3$ with no clasp and no non-borromean triple point, 
 \item {\em slice} if it is smoothly properly embedded in $B^4$.
\end{itemize}
\end{samepage}

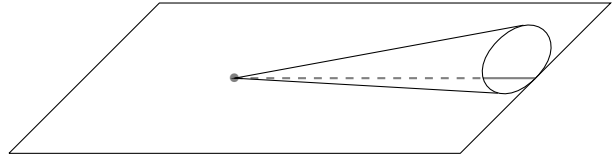
\begin{figure}[htb]
\begin{center}
\begin{tikzpicture} [xscale=0.8]
\begin{scope}
 \draw (-3,-1) -- (3,-1) -- (5,1) -- (-1,1) -- (-3,-1);
 \draw[thick,gray,dashed] (0,0) node {$\scriptstyle{\bullet}$} -- (3.3,0);
 \draw[thick,gray] (3.3,0) -- (4,0);
 \draw (4,0) .. controls +(0.5,0.5) and +(0.5,0.5) .. (3.5,0.5) .. controls +(-0.5,-0.5) and +(-0.5,-0.5) .. (4,0);
 \draw (3.5,-0.2) -- (0,0) -- (3.84,0.705);
 \draw (0,-1.6) node {embedded surface};
\end{scope}
\begin{scope} [xshift=9cm]
 \draw (-3,-1) -- (2.8,-1.2) -- (5.3,1.3) -- (-1,1) -- (-3,-1);
 \draw[thick,gray] (3.5,-0.5) -- (0,0) node {$\scriptstyle{\bullet}$} -- (4.5,0.5);
 \draw (0,-1.6) node {preimage};
\end{scope}
\end{tikzpicture}
\caption{A branched point} \label{figbranchedpt}
\end{center}
\end{figure}

Beyond ribbons and clasps, the different types of double point lines that appear on a normal singular surface are closed lines, namely {\em circles}, or intervals, called {\em branched ribbons} if one endpoint is branched and {\em branched circles} if the two endpoints are branched, see Figure~\ref{figsingularities}, where the preimages are drawn. Like for a ribbon, the two preimages of a branched ribbon are naturally divided into a {\em $b$--line} that contains a boundary point and an {\em $i$--line} that doesn't. For a (branched) circle, one may call $b$--line one preimage and $i$--line the other. A normal singular surface with such namings assigned to the preimages of each (branched) circle is said to be {\em marked}. A triple point on a normal singular surface is {\em borromean} if its three preimages are intersections of a $b$--line and an $i$--line.

\begin{figure}[htb]
\begin{center}
\begin{tikzpicture} [scale=0.5]
 \begin{scope}
 \tour
 \draw (-0.3,-1.3) -- (-0.5,-0.2);
 \draw (-0.4,-0.7) node[right] {$i$};
 \draw[rotate=240] (-1.43,-1.4) -- (1.43,-1.4);
 \draw (-1,1) node[right] {$b$};
 \draw (0.2,-2.8) node {ribbon};
 \end{scope}
 \begin{scope} [xshift=7cm] 
 \tour
 \draw (-0.3,-1.3) -- (0,0.2) -- (-2,0);
 \draw (-0.2,-0.5) node[right] {$i$};
 \draw (-1,0) node[above] {$b$};
 \draw (0.5,-2.8) node {branched ribbon};
 \end{scope}
 \begin{scope} [xshift=14cm] 
 \tour
 \draw (-0.2,-1.2) circle (0.4);
 \draw (-0.8,0.3) circle (0.5);
 \draw (0.3,-2.8) node {circle};
 \end{scope}
 \begin{scope} [xshift=21cm] 
 \tour
 \draw (-0.4,-1.2) .. controls +(0.7,1) and +(0.7,-1) .. (-0.4,1) .. controls +(-0.7,-1) and +(-0.7,1) .. (-0.4,-1.2);
 \draw (0.5,-2.8) node {branched circle};
 \end{scope}
\end{tikzpicture}
\caption{Double points lines on a normal singular surface (preimages)} \label{figsingularities}
\end{center}
\end{figure}
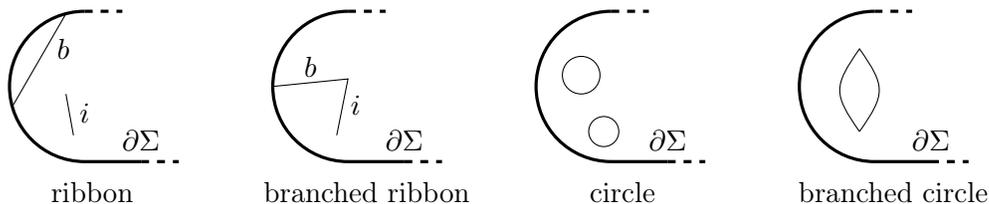

Given a link $L=L_1\sqcup\dots\sqcup L_n$ in $S^3$, a {\em complex for $L$} is a union of compact surfaces $\Sigma=\cup_{1\leq i\leq n}\Sigma_i$ such that $\partial\Sigma_i=L_i$ for all $i$; it is a {\em disks complex} if each $\Sigma_i$ is a disk. The notions defined above for compact surfaces extend to complexes. 
The {\em genus} of such a complex is the sum of the genera of its components. The {\em slice genus $g_s(L)$} (resp. {\em ribbon genus $g_r(L)$}) of a link $L$ is the minimal genus of a slice (resp. ribbon) complex for $L$. Note that $g_s(L)\leq g_r(L)$. These invariants are well-defined (finite) for {\em algebraically split links}, namely links whose components have trivial pairwise linking numbers. The following result generalizes the characterization of slice knots by Kawauchi--Shibuya--Suzuki \cite[Corollary~6.7]{KSS}.

\begin{theo}[Corollary~\ref{corCharSliceGenus}]
 The slice genus of an algebraically split link $L$ equals the minimal genus of a marked normal singular complex for $L$ with no clasp and no borromean triple point.
\end{theo}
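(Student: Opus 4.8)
The plan is to prove the two inequalities separately, along the lines of Kawauchi--Shibuya--Suzuki but for surfaces of arbitrary genus.

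\textbf{From a normal singular complex to a slice complex.} Suppose first that $\Sigma\subset S^3$ is a marked normal singular complex for $L$, with no clasp and no borromean triple point, of genus $g$. I would build a slice complex for $L$ of genus $g$ by pushing $\Sigma$ radially into $B^4$, working in a collar $S^3\times[0,1]$ of $\partial B^4=S^3\times\{1\}$. Every double-point line of $\Sigma$ is a ribbon, a branched ribbon, a circle or a branched circle, and carries a distinguished $i$-line and $b$-line; the recipe is to push a neighbourhood of each $i$-line slightly towards the interior of $B^4$, keeping the $b$-sheets on $S^3\times\{1\}$. Near a branch point one resolves the cross-cap by the same kind of push, and near a non-borromean triple point the three sheets inherit a linear order from the $b/i$ data, so one pushes the two lower $i$-sheets in. Each move is a small isotopy that leaves the genus unchanged, so after all singularities are resolved one is left with an embedded surface in $B^4$ with boundary $L=\partial\Sigma$, of genus $g$. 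The only thing to verify is that the local pushes are globally compatible, i.e. that the ``deeper than'' relation they prescribe has no cycle along $\Sigma$; this is precisely what is ruled out by the absence of clasps (whose two preimage arcs interlock, forbidding a consistent choice of which sheet is deeper) and of borromean triple points (which would produce a cyclic obstruction, exactly as in the Borromean rings). Hence $g_s(L)\le g$, so $g_s(L)$ is at most the minimal genus in the statement.

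\textbf{From a slice complex to a normal singular complex.} Conversely, let $F\subset B^4$ be a slice complex for $L$ of genus $g=g_s(L)$. I would isotope $F$ so that it avoids the centre of $B^4$, so that it is a product $L\times[1-\varepsilon,1]$ in a collar of $\partial B^4$, and so that the radial distance $h\colon B^4\to[0,1]$ restricts to a Morse function on $F$, then rearrange the critical points so that their values increase with the index. Radially projecting by $p\colon B^4\setminus\{0\}\to S^3$ with $F$ in general position with respect to the radial foliation, $p|_F$ is an immersion away from finitely many branch points, and $\Sigma_0:=p(F)$ is a normal singular complex for $L$ of the same genus as $F$. Mark each double-point line of $\Sigma_0$ using $h$: of its two preimage arcs, the one of smaller height is the $i$-line and the one of larger height the $b$-line. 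Since at each triple point of $\Sigma_0$ the three sheets have pairwise distinct heights, none of these triple points is borromean. It then remains to remove the clasps: I would invoke Kaplan's procedure, which trades a clasp for ribbons, triple points and branch points without changing the genus, arranged so that no borromean triple point is created, and finally use that $L$ is slice to cancel any leftover borromean triple points at no cost in genus. The result is a marked normal singular complex for $L$ with no clasp and no borromean triple point, of genus $g_s(L)$, so the minimal genus in the statement is at most $g_s(L)$.

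Putting the two bounds together gives the asserted equality. The routine parts are the genus bookkeeping in both directions and, in the second direction, the $4$-to-$3$-dimensional compression itself (isotopy into product-plus-Morse position, rearrangement, radial projection, general position). The hard part will be the clean-up at the end of the second direction: removing the clasps by Kaplan-type moves while never introducing a borromean triple point, and making essential use of the sliceness hypothesis to eliminate all borromean triple points within genus $g_s(L)$ --- indeed, for a link that is not slice the same procedure must leave borromean triple points behind, which is exactly the mechanism behind the lower bounds for the $T$-genus. In the first direction the one delicate point is the global coherence of the pushing-in, which, as noted, is encoded precisely by the two hypotheses ``no clasp'' and ``no borromean triple point''.
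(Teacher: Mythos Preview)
Your first direction is correct and is essentially the paper's Proposition~\ref{propborrotoclasp} specialised to $b=0$: the $b/i$ data give a well-defined radius function precisely because there is no clasp and no borromean triple point.

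In the second direction, your core idea---project radially and mark each double line by height---is right, and in fact it already finishes the proof; the trouble is that you do not trust it and then append an unjustified clean-up step. Observe that if $F\subset B^4$ is \emph{embedded}, then along any double line of the projection the two preimage arcs can never exchange heights: equal height at the same $S^3$--coordinate would mean equal point in $B^4$. Two consequences follow. First, your height marking is globally consistent and, for (branched) ribbons, automatically agrees with the structural $b/i$ assignment, since the $b$--line reaches $\partial F$ at height~$1$ while the $i$--line does not. Second, a clasp is impossible: its two preimage arcs each touch $\partial F$ at one end, so the height difference is positive at one endpoint of the line and negative at the other, forcing a crossing. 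Thus the projected complex already has no clasp and no borromean triple point, and your appeal to ``Kaplan's procedure'' and to ``$L$ is slice'' is unnecessary. That last appeal is also incorrect as stated: $L$ is not assumed slice, only algebraically split with some slice genus $g_s(L)$, and there is no mechanism offered for cancelling borromean triple points ``at no cost in genus''---that would amount to assuming the very equality you are proving.

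For comparison, the paper's proof of this direction (Theorem~\ref{thprojection}) does not use the height marking directly. Instead it reorganises the Morse function so that the projection is a ribbon complex $\Sigma'$ (from the minima and saddles) together with capping disks $D$ (from the maxima), and then analyses the double lines case by case to produce a marking with no borromean triple point. Your height argument is more direct once one sees that it excludes clasps as well; the paper's decomposition, on the other hand, is reused later to prove Theorem~\ref{thconcordance}.
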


The {\em $T$--genus} $T_s(L)$ (resp. {\em ribbon $T$--genus $T_r(L)$}) is the minimal number of borromean triple points on a marked normal singular disks complex (resp. $T$--ribbon disks complex) with no clasp for $L$ (the subscripts $s$ and $r$ stand for slice and ribbon, indeed $T_s$ is related to slice invariants and $T_r$ to ribbon invariants); obviously $T_s(L)\leq T_r(L)$. Note that these numbers may be undefined. Kaplan \cite{Kaplan} proved that any algebraically split link bounds a $T$--ribbon disks complex. In particular, the (ribbon) $T$--genus of an algebraically split link is well-defined. It follows that any of the four invariants $g_s,g_r,T_s,T_r$ is well-defined on a link if and only if this link is algebraically split (see for instance Corollary~\ref{corgenusTgenus}).

In \cite{MS}, Murakami and Sugishita proved that the $T$--genus of a knot is an upper bound for the slice genus. We generalize this result to algebraically split links and prove the ribbon counterpart of it.
\begin{theo}[Corollary~\ref{corgenusTgenus}]
 For any algebraically split link $L$, $g_s(L)\leq T_s(L)$ and $g_r(L)\leq T_r(L)$. 
\end{theo}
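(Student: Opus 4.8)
The plan is to start, in each case, from an optimal disks complex and to destroy the borromean triple points one at a time by a local modification that trades each such triple point for one unit of genus while keeping the complex inside the relevant class. The single ingredient to be established is the following local move: if $p$ is a borromean triple point of a marked normal singular (resp. $T$--ribbon) complex, then, inside a ball meeting the complex only in the three sheets $A$, $B$, $C$ that pass through $\bar p$ (these sheets meet pairwise along ribbons $r_{AB}$, $r_{BC}$, $r_{CA}$, all three containing $p$), one can delete a small disk of the sheet carrying the $i$--line through one preimage of $p$ and glue in an embedded tube (a $1$--handle) so that the result (i) no longer has the triple point $p$, (ii) has genus increased by exactly $1$, and (iii) acquires only new ribbon double-point lines — no clasp, no branch point, no new triple point. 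Assuming this, the two inequalities follow by straightforward bookkeeping.

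For the slice inequality, let $D=\cup D_i$ be a marked normal singular disks complex for $L$ with no clasp and exactly $T_s(L)$ borromean triple points (finitely many, so their local balls may be taken disjoint). Apply the local move at each of them, choosing an obvious marking for the newly created (branched) circles. Since each move is supported in its ball and adds one handle, the outcome is a marked normal singular complex $D'$ for $L$ with no clasp, no borromean triple point, and genus $T_s(L)$. The slice genus characterization (Corollary~\ref{corCharSliceGenus}) then gives $g_s(L)\leq \mathrm{genus}(D')=T_s(L)$. For the ribbon inequality, start instead from a $T$--ribbon disks complex for $L$ with $T_r(L)$ borromean triple points; being $T$--ribbon it is immersed in $S^3$ (no branch points), with no clasp and no non-borromean triple point. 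Applying the same local move at each of its $T_r(L)$ triple points keeps the complex immersed in $S^3$ with no clasp, and once all the triple points have been removed there are no triple points left at all, so we obtain a ribbon complex for $L$ of genus $T_r(L)$; hence $g_r(L)\leq T_r(L)$.

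The heart of the matter, and the step I expect to be the main obstacle, is the local lemma itself: that a borromean triple point admits a genus-one tubing that eliminates it and leaves only ribbon singularities. This is exactly where the borromean hypothesis is used — the pattern of $b$--lines and $i$--lines at the three preimages of $p$ (see Figure~\ref{figtriplepoints}) is what allows the new tube to be routed off the offending sheet along an $i$--line without creating a clasp or a fresh triple point; for a non-borromean triple point the analogous move breaks down. Carrying this out rigorously requires a careful local picture, in the spirit of Figures~\ref{figribbonclasp} and \ref{figsingularities}, together with a check that the tubing interacts with $B$ and $C$ only in ribbons, does not produce branch points, and affects the genus count and the marking as claimed. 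The inequalities $T_s(L)\leq T_r(L)$ and $g_s(L)\leq g_r(L)$ recorded in the text are consistent with this argument but are not needed for it.
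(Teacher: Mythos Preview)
Your approach is correct and genuinely more direct than the paper's. The paper's proof of Corollary~\ref{corgenusTgenus} routes through Theorem~\ref{thDeltaDistance}: since $T_r(L)=\rd(L)$, the link $L$ arises from a ribbon link by $T_r(L)$ $\Delta$--moves, and each $\Delta$--move is realized by gluing a copy of the genus--$1$ ribbon complex of Figure~\ref{figSeifertBorro}; likewise for the slice version (the paper also gives an earlier proof of $g_s\leq T_s$, Corollary~\ref{corslicegenusTgenus}, via the cobordism to split borromean links of Corollary~\ref{corcobborro}). You instead perform the trade ``borromean triple point $\leadsto$ one unit of genus'' directly on the optimal disks complex, then invoke Corollary~\ref{corCharSliceGenus} in the slice case. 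Both routes rest on the same geometric fact: a neighborhood of a borromean triple point is the picture of Figure~\ref{figDisksBorromean}, and it can be replaced by the genus--$1$ ribbon picture of Figure~\ref{figSeifertBorro}; that substitution \emph{is} your local lemma, and once you phrase it this way the verification you flag as the ``main obstacle'' is immediate. (Your description ``delete a small disk and glue in a tube'' is a bit loose --- a tube has two ends --- but the figure makes the intended move unambiguous.) The payoff of your route is brevity and the avoidance of the $\Delta$--distance machinery; the paper's route has the advantage of establishing the stronger identities $T_s=\sd$, $T_r=\rd$ along the way.
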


The proof we give relies on the expression of the $T$--genus in terms of $\Delta$--distance, which is the distance on the set of links defined by the $\Delta$--move (see Figure~\ref{figdeltamove}). 
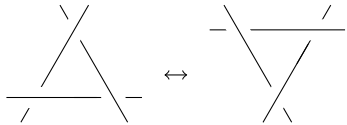
\begin{figure}[htb] 
\begin{center}
\begin{tikzpicture} [scale=0.3]
\newcommand{\DM}{
\draw (-3,-1) -- (3,-1);
\draw[color=white,line width=8pt,rotate=120] (-3,-1) -- (0,-1);
\draw[rotate=120] (-3,-1) -- (3,-1);
\draw[color=white,line width=8pt,rotate=240] (-3,-1) -- (0,-1);
\draw[rotate=240] (-3,-1) -- (3,-1);
\draw[color=white,line width=8pt] (-3,-1) -- (0,-1);
\draw (-3,-1) -- (1,-1);}
\DM
\draw[<->] (4,0) -- (5,0);
\begin{scope} [xshift=9cm,yshift=1cm,rotate=60]
\DM
\end{scope}
\end{tikzpicture}
\end{center}
\caption{$\Delta$--move} \label{figdeltamove}
\end{figure}
The {\em $\Delta$--slicing number $\sd(L)$} (resp. {\em $\Delta$--ribbonning number $\rd(L)$}) is the minimal number of $\Delta$--moves necessary to change $L$ into a slice (resp. ribbon) link. Note that these are well-defined if and only if $L$ is algebraically split (see Theorem~\ref{thMuNa}). Kawauchi, Murakami and Sugishita \cite{KMS} proved that the $T$--genus of a knot equals its $\Delta$--slicing number. We generalize this as follows. 
\begin{theo}[Theorem~\ref{thDeltaDistance}]
 For any algebraically split link $L$, $T_s(L)=\sd(L)$ and $T_r(L)=\rd(L)$.
\end{theo}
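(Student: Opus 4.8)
The plan is to prove $T_s(L)=\sd(L)$ and then deduce $T_r(L)=\rd(L)$ by running exactly the same argument with ``no clasp'' strengthened to ``no clasp and no non-borromean triple point'' (i.e.\ \emph{$T$--ribbon}) and ``slice'' replaced by ``ribbon''. This extends the knot case of Kawauchi--Murakami--Sugishita \cite{KMS}, and the bridge between the two quantities is a \emph{local correspondence}: inserting one borromean triple point into a marked normal singular disks complex corresponds to performing one $\Delta$--move on its boundary. First I would isolate this as a local lemma. There is a standard pair of $3$--strand tangles $t_0,t_1$ in a ball $B^3$, related by a single $\Delta$--move, together with marked normal singular disks complexes $E_0,E_1\subset B^3$ for $t_0$ and $t_1$ rel $\partial$, such that $E_0$ is three disjoint embedded sheets while $E_1$ has no clasp, no non-borromean triple point, and exactly one borromean triple point (here $E_1$ is the restriction to $B^3$ of the classical three immersed disks bounded by the Borromean rings, the ones realizing $\bar\mu_{123}=1$). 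Then, for any marked normal singular disks complex $\mathcal D$ for a link $K$ and any ball $B$ with $(B,\mathcal D\cap B)$ ambient isotopic rel $\partial$ to $(B^3,E_0)$ (resp.\ $(B^3,E_1)$), replacing $\mathcal D\cap B$ by a copy of $E_1$ (resp.\ $E_0$) yields a marked normal singular disks complex for the link $K'$ obtained from $K$ by the corresponding $\Delta$--move, with no new clasp, with the marking unchanged outside $B$, and with one more (resp.\ one fewer) borromean triple point; no clasp is created because nothing changes outside $B$ and the $b$/$i$ labels agree along $\partial B$, and in the $T$--ribbon setting one checks in addition that $E_1$ is itself $T$--ribbon.

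Granting the local lemma, both inequalities follow by induction. For $T_s(L)\le\sd(L)$, I would induct on $k=\sd(L)$. If $k=0$, then $L$ is slice, so by Corollary~\ref{corCharSliceGenus} it bounds a marked normal singular disks complex with no clasp and no borromean triple point, hence $T_s(L)=0$. If $k\ge1$, pick a $\Delta$--move carrying $L$ to $L'$ with $\sd(L')=k-1$, take a complex $\mathcal D'$ realizing $T_s(L')\le k-1$, isotope $\mathcal D'$ so that inside the move ball it consists of three trivial parallel sheets (push every other piece of $\mathcal D'$ out of that ball, which is possible since the three arcs of $L'$ there are unknotted and unlinked), and apply the local lemma in the direction from $E_0$ to $E_1$; this gives a complex for $L$ with at most $(k-1)+1=k$ borromean triple points. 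For $\sd(L)\le T_s(L)$, I would induct on $b=T_s(L)$. If $b=0$, then $L$ bounds a complex with no clasp and no borromean triple point, hence is slice by Corollary~\ref{corCharSliceGenus} and $\sd(L)=0$. If $b\ge1$, take $\mathcal D$ realizing $b$ borromean triple points, choose one such point $p$, and use the three $b$--lines meeting $p$ to guide an ambient isotopy dragging three short arcs of $L$ into a small ball $B$ around $p$ so that $(B,\mathcal D\cap B)$ becomes the model $(B^3,E_1)$; applying the local lemma in the direction from $E_1$ to $E_0$ realizes a single $\Delta$--move on $L$, producing $L'$ that bounds a complex with no clasp and $b-1$ borromean triple points, whence $\sd(L)\le\sd(L')+1\le b$ by induction.

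Combining the two inequalities gives $T_s(L)=\sd(L)$, and the ribbon statement is identical once one notes that a $T$--ribbon disks complex with no borromean triple point has no triple point at all, hence is a ribbon disks complex, so that $T_r(K)=0$ iff $K$ is a ribbon link; the well-definedness of all four quantities for algebraically split $L$ comes from Theorem~\ref{thMuNa} together with Kaplan's theorem \cite{Kaplan}.

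The main obstacle is the local lemma, and it splits into two. The first is producing the model $E_1$ and checking that grafting it into an arbitrary marked complex creates no clasp and keeps a coherent global marking, so that the inserted triple point is genuinely borromean and no pre-existing double-point line changes type: this is exactly the sort of delicate $b$--line/$i$--line bookkeeping that must be done on the nose. The second, and harder, is carrying out the two isotopies that bring $\mathcal D$ into standard local form: clearing the complex out of a $\Delta$--move ball is routine, but in the direction $\sd(L)\le T_s(L)$ one must drag three arcs of $L$ inward along the three $b$--lines emanating from a borromean triple point $p$ without creating new self-intersections of the complex, which requires controlling how those $b$--lines (and the associated $i$--lines, and the rest of $\mathcal D$) sit relative to the other singularities. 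This geometric input is the heart of the argument, and it is where the passage from knots to algebraically split links requires the most care.
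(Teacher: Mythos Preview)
Your approach to $T_s(L)\le\sd(L)$ and $T_r(L)\le\rd(L)$ via inserting the local model $E_1$ is essentially the paper's argument, phrased there as ``gluing a borromean link'' together with its standard $T$--ribbon disks (Figure~\ref{figDisksBorromean}). Likewise, your argument for $\rd(L)\le T_r(L)$---dragging arcs of $L$ along $b$--lines to isolate a borromean triple point and then excising it---is exactly what the paper does for that inequality (Figures~\ref{figSlideBoundary} and~\ref{figRemoveTriplePoint}); in a $T$--ribbon complex every double-point line is a genuine ribbon, so each $b$--line does reach $\partial\Sigma$ and the dragging makes sense.

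The gap is in your treatment of $\sd(L)\le T_s(L)$. You propose to run the same dragging argument on a general \emph{marked normal singular} disks complex, but there the $b$--lines through a borromean triple point need not be ribbon $b$--lines: they may be preimages of circles or branched circles (Figure~\ref{figsingularities}), in which case they lie entirely in the interior of $\tilde\Sigma$ and never meet $\partial\tilde\Sigma$, so there is no arc of $L$ to drag inward along them. Your proposal flags the dragging as delicate but does not address this obstruction; as stated, the local lemma in the $E_1\to E_0$ direction cannot be set up for such a triple point. The paper avoids this entirely by taking a different route for $\sd(L)\le T_s(L)$: via Proposition~\ref{propcob} and Corollary~\ref{corcobborro}, $T_s(L)=t$ yields a strict cobordism from $L$ to a split union $B^t$ of $t$ borromean links; one then observes (Figure~\ref{figBorroToTrivial}) that a suitable band sum of two borromean links is trivial, and slides those bands down the cobordism so that they attach to $L$ itself. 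This exhibits a single $\Delta$--move on $L$ producing $L'$ with a strict cobordism to $B^{t-1}$, hence $T_s(L')\le t-1$, and induction gives $\sd(L)\le T_s(L)$. That cobordism-and-band-sliding argument is what replaces your missing dragging step; note in particular that the paper does \emph{not} treat the two inequalities $\sd\le T_s$ and $\rd\le T_r$ uniformly.
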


It is a natural generalization of the ribbon--slice question to ask wether the $T$--genus and the ribbon $T$--genus always coincide. The above result shows that it is an equivalent question.

\begin{corollary}
 The slice knots are all ribbon if and only if $T_s(K)=T_r(K)$ for any knot $K$. The same holds for algebraically split links with a given number of components.
\end{corollary}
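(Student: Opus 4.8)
The plan is to read off the corollary directly from the $\Delta$--distance description of the $T$--genera (Theorem~\ref{thDeltaDistance}): that theorem identifies $T_s$ with $\sd$ and $T_r$ with $\rd$, so the asserted equivalence becomes a purely combinatorial statement about the two unknotting--type distances $\sd$ and $\rd$. Two observations will be used repeatedly. First, every knot is (vacuously) algebraically split, so both $\sd$ and $\rd$ are well defined on knots. Second, $\sd(L)\le\rd(L)$ always holds, since a ribbon complex is in particular slice, hence any sequence of $\Delta$--moves turning $L$ into a ribbon link also turns it into a slice link. Note also that, by definition, $\sd(K)=0$ if and only if $K$ is slice, and $\rd(K)=0$ if and only if $K$ is ribbon.

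For the first implication, assume every slice knot is ribbon and let $K$ be an arbitrary knot. Choose a shortest chain of $\Delta$--moves realizing $\sd(K)$ and ending at a slice knot $K'$. By hypothesis $K'$ is ribbon, so the very same chain shows $\rd(K)\le\sd(K)$; together with $\sd(K)\le\rd(K)$ this gives $\sd(K)=\rd(K)$, hence $T_s(K)=T_r(K)$ by Theorem~\ref{thDeltaDistance}. Conversely, assume $T_s(K)=T_r(K)$ for every knot $K$, i.e. $\sd(K)=\rd(K)$ for all $K$. If $K$ is slice then $\sd(K)=0$, so $\rd(K)=0$, i.e. $K$ is ribbon. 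This settles the knot case.

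The link version is proved by the same two arguments word for word, the only additional point being that a $\Delta$--move changes neither the number of components of a link nor its pairwise linking numbers (Theorem~\ref{thMuNa}). Consequently a chain of $\Delta$--moves starting at an algebraically split $n$--component link remains within that class, so both distances $\sd,\rd$ and the hypothesis ``all slice such links are ribbon'' are automatically taken with the component count fixed; the argument above then applies verbatim.

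I do not expect a genuine obstacle here: all the mathematical content sits in Theorem~\ref{thDeltaDistance} (and, for the link statement, in the Murakami--Nakanishi result Theorem~\ref{thMuNa}), and once these are granted the corollary is a short formal deduction. The only step deserving a line of care is checking that the $\Delta$--move preserves the ambient class of links, so that the ``fixed number of components'' clause is meaningful and the minimization defining $\sd$ and $\rd$ stays inside it.
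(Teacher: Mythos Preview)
Your argument is correct and is exactly the deduction the paper has in mind: the corollary is stated immediately after Theorem~\ref{thDeltaDistance} with no proof, the sentence ``The above result shows that it is an equivalent question'' making clear that the intended justification is precisely the translation $T_s=\sd$, $T_r=\rd$ together with the trivial observations $\sd\le\rd$ and $\sd=0\Leftrightarrow$ slice, $\rd=0\Leftrightarrow$ ribbon. Your remark that $\Delta$--moves preserve component count and linking numbers is the right way to make the link version precise.
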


In \cite{MS}, Murakami and Sugishita proved that the $T$--genus of knots is a concordance invariant whose mod--$2$ reduction is the Arf invariant --- we generalize this to algebraically split links. They deduced the value of the $T$--genus for several knots satisfying $T_s-g_s=0,1$. The question arises then to know if this difference can be arbitrarily large. In Section~\ref{secex}, we provide a family of non-split links $B_n$ with $g_s(B_n)=1$ and $T_s(B_n)=n$. These links are constructed by cabling one component of the borromean link. 
Nevertheless, this is not fully satisfying in that it is based on the augmentation of the number of components of the link. To get the answer in the setting of knots, we will compare the $T$--genus with the $4$--dimensional positive clasp number. The {\em $4$--dimensional clasp number} $c_4(L)$ of a link $L$ is the smallest number of transverse double points on an immersed disks complex in $B^4$ bounded by $L$. Similarly define the {\em $4$--dimensional positive/negative clasp number} $c_4^+(L)$/$c_4^-(L)$ by counting the positive/negative double points. We also consider here a balanced version of this invariant, which is the most natural in the comparison with the $T$--genus. The {\em balanced $4$--dimensional clasp number} $c_4^b(L)$ of a link $L$ is the smallest number of positive transverse double points on an immersed disks complex $\Sigma$ in $B^4$, bounded by $L$, with trivial self-intersection number ({\em ie} $\Sigma$ has the same number of positive and negative transverse double points). We have the following immediate inequalities (note that a positive or negative transverse double point can be added to an immersed surface in $B^4$ without modifying its boundary).
$$c_4^+(L),c_4^-(L)\leq c_4^b(L)\leq c_4(L)\leq 2c_4^b(L)$$

Note that, among all the invariants introduced at that point, only $c_4^\pm$ depends on the orientation, in the case of a link with at least two components.

It is well known that the $4$--dimensional clasp number is an upper bound for the slice genus --- a transverse double point can be smoothed at the cost of adding one to the genus of the surface. For knots, this can be improved as follows.

\begin{lem}[Lemma~\ref{lemmagscb}]
 For any knot $K$, $g_s(K)\leq c_4^b(K)$.
\end{lem}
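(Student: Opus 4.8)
The plan is to start from an immersed disk realizing $c_4^b(K)$ and to cancel its double points two at a time, trading each pair of opposite‑signed double points for a single handle, so that the genus of the resulting embedded surface grows by exactly the number of such pairs.

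Set $p=c_4^b(K)$ and let $f\colon D\to B^4$ be an immersed disk with $f(\partial D)=K$ whose only singularities are $p$ positive and $p$ negative transverse double points; in particular its algebraic self-intersection number is $0$, as the definition of $c_4^b$ requires. Since $K$ has a single component, $D$ is connected, so I can group the double points into $p$ pairs $(d_i^+,d_i^-)$, each made of one positive and one negative double point, and for every $i$ choose an embedded arc $\alpha_i$ in the source surface joining a preimage of $d_i^+$ to a preimage of $d_i^-$. Being arcs in a surface, the $\alpha_i$ may be taken pairwise disjoint and disjoint from every other double-point preimage. Let $\beta_i\subset B^4$ be a thin ball neighbourhood of the image of a regular neighbourhood in $D$ of $\alpha_i$ together with the two double points $d_i^\pm$; for the $\beta_i$ thin enough and the $\alpha_i$ generic, the $\beta_i$ are pairwise disjoint and $D\cap\beta_i$ is a single piece $P_i$ --- an immersed union of three disks, namely two small sheets and a band --- carrying exactly the double points $d_i^\pm$ and meeting $\partial\beta_i$ in a $3$--component link $\ell_i$.

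The crux is that a pair of opposite-signed double points together with a Whitney arc joining them has a standard local model: inside $\beta_i$ the piece $P_i$, the link $\ell_i$, and the framing data relevant to the move below are all determined, and it is precisely the \emph{opposition} of the two signs that makes this framing trivial. Granting this, one reads off the explicit picture an \emph{embedded} surface $P_i'\subset\beta_i$ with $\partial P_i'=\ell_i$ and Euler characteristic $\chi(P_i')=1$ (concretely, a disk together with an annulus): it is an embedded replacement of $P_i$ with the same boundary that raises the genus by one when spliced in. Had the two signs agreed, the framing would be nontrivial and the same scheme would only yield an embedded replacement raising the genus by two, recovering merely the known bound $g_s(K)\le c_4(K)$; thus the balanced hypothesis is exactly what makes the estimate sharper.

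Finally, replacing $P_i$ by $P_i'$ inside each $\beta_i$ --- these modifications are mutually disjoint and, since $D\cap\beta_i=P_i$, disjoint from the rest of $D$ --- yields a surface $\Sigma\subset B^4$ that is properly embedded, connected, with $\partial\Sigma=K$. An Euler-characteristic count gives $\chi(\Sigma)=\chi(D)-3p+p=1-2p$, so $\Sigma$ has genus $p$; hence $K$ bounds a slice surface of genus $p$ and $g_s(K)\le p=c_4^b(K)$. The one substantial point is the local statement of the third paragraph: writing down the standard model of a canceling pair along a Whitney arc, checking that opposite signs kill the relevant framing, and exhibiting $P_i'$ inside the small ball $\beta_i$. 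The remaining points --- choosing the Whitney arcs disjoint, shrinking the $\beta_i$, and the bookkeeping showing $\Sigma$ is connected with one boundary circle --- are routine.
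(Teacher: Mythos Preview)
Your approach is correct and is essentially the same as the paper's: pair the double points by sign, choose a Whitney arc on the immersed disk for each pair, and tube the two transverse sheets along that arc, the opposition of signs being exactly what guarantees the resulting surface is oriented (your ``trivial framing'' is the paper's ``the sign condition ensures that the new surface is again oriented''). The paper phrases the local step more concretely---it builds an explicit solid tube $C=\gamma\times D^2$ and replaces the two small transverse disks by the cylinder $\gamma\times S^1$---while you package it as an abstract local-model replacement with an Euler-characteristic count, but the content is identical.
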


We will see in Section~\ref{secex} that this lemma does not hold for algebraically split links. In contrast, we have the following.

\begin{theo}[Corollary~\ref{corTgenusclaspnb}]
 For any algebraically split link $L$, $c_4^b(L)\leq T_s(L)$.
\end{theo}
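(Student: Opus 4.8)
The plan is to realize the $T$--genus by a marked normal singular disks complex $\Sigma=\Sigma_1\cup\dots\cup\Sigma_n$ for $L$ with no clasp and exactly $T_s(L)$ borromean triple points, and to push $\Sigma$ into $B^4$ so that every singularity is resolved except that each borromean triple point leaves behind a cancelling pair of transverse double points. The resulting immersed disks complex $F$ for $L$ will then have trivial self--intersection number and exactly $T_s(L)$ positive double points, whence $c_4^b(L)\le T_s(L)$ by definition of $c_4^b$. As a preliminary I would first remove the branch points of $\Sigma$: near a branch point, $\Sigma$ consists of a sheet together with a ``flap'' tangent to it along a double--point arc terminating at the branch point, and pushing the flap slightly through the sheet inside a small ball disjoint from the rest of the singular set converts the adjacent branched ribbon (or branched circle) into an ordinary double circle and removes the branch point, creating neither a clasp nor a new triple point and fixing the set of borromean triple points. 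After this, $\Sigma$ is immersed, still has no clasp and exactly $T_s(L)$ borromean triple points, and possibly some non--borromean triple points.

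Next I would push $\Sigma$ into the collar $S^3\times[0,1]\subset B^4$, keeping $\partial\Sigma=L$ on $S^3$, by assigning heights to the sheets. Away from the triple points every line of double points is a ribbon or a marked double circle, hence carries a distinguished $b$--preimage and $i$--preimage; I would arrange that near such a line the sheet containing the $i$--line lies strictly below the one containing the $b$--line, so that the two sheets become disjoint in $B^4$ there. The same coherent height--ordering of the three local sheets can be achieved in a neighborhood of each \emph{non}--borromean triple point, precisely because its $b/i$--pattern is not cyclic, so all three of its ribbons are resolved simultaneously with no leftover intersection. Since the perturbation is $C^0$--small, it creates no intersections between parts of $\Sigma$ that were previously disjoint, so the only singularities of the pushed--in surface lie in disjoint balls around the borromean triple points.

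At a borromean triple point the three sheets $A,B,C$ and the three ribbons $A\cap B$, $B\cap C$, $C\cap A$ carry a cyclic $b/i$--pattern (that of the Borromean rings), so the three height inequalities needed to resolve the three ribbons at once are jointly inconsistent near the triple point. Working in the standard oriented local model, one checks that the three ribbons can nonetheless be resolved at the cost of exactly one pair of transverse double points of opposite signs: the local self--intersection number is forced to be $0$ because the three boundary arcs near a triple point have pairwise trivial linking, yet the local configuration cannot be made embedded in $B^4$ --- this is the four--dimensional incarnation of the non--vanishing of Milnor's triple linking number --- so an oppositely--signed pair of double points is both necessary and, in the standard resolution, sufficient. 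Performing this resolution in each of the $T_s(L)$ balls yields an immersed disks complex $F$ for $L$ with exactly $T_s(L)$ positive and $T_s(L)$ negative double points, hence with trivial self--intersection number; therefore $c_4^b(L)\le T_s(L)$.

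The step I expect to be the main obstacle is the last one: pinning down the oriented local model of a borromean triple point, compatibly with the given marking, and verifying carefully that the height--resolution of its three ribbons leaves exactly one oppositely--signed pair of double points and no further self--intersection --- in particular arranging the signs to cancel so that the self--intersection number of $F$ is genuinely zero rather than merely even. By comparison, the removal of branch points and the resolution of ribbons, double circles and non--borromean triple points are routine, the marking being used only to order the two sheets along each double circle and to make sense of the borromean condition.
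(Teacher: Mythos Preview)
Your overall strategy coincides with the paper's (Proposition~3.1): define a radius function $r:\tilde\Sigma\to(0,1]$ sending $\partial\tilde\Sigma$ to $1$, $i$--lines into $(0,\tfrac12)$ and $b$--lines into $(\tfrac12,1)$; the map $x\mapsto(\iota(x),r(x))$ into $B^4$ then resolves all ribbons, circles and non--borromean triple points exactly by the ``acyclic'' ordering you describe, and each borromean triple point is traded for one cancelling pair of transverse double points.

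Two points deserve correction. First, your preliminary removal of branch points in $S^3$ is both unnecessary and, as written, not valid: the Whitney umbrella is a \emph{stable} singularity of maps of surfaces into $3$--manifolds, so a small perturbation cannot remove it, and your claim that a branched \emph{ribbon} becomes a double circle cannot hold since one of its endpoints lies on $\partial\Sigma$. The paper avoids this entirely by sending branch points (together with the $(b\text{--}i)$ triple--point preimages) to $r=\tfrac12$. At a branch point the two preimage arcs of the adjacent double line are a $b$--arc and an $i$--arc meeting at the cross--cap preimage, so $r$ increases through $\tfrac12$ along the kernel direction of $d\iota$; the augmented map into $B^4$ therefore has full rank there and separates the two sheets, dissolving the branch point in $B^4$ without any modification in $S^3$.

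Second, what you flag as the main obstacle is handled in the paper by a completely explicit local model rather than by appeal to Milnor's invariant. After the first step all three preimages $p_1,p_2,p_3$ of a borromean triple point sit at $r=\tfrac12$, so the triple point persists in $B^4$. Choosing indices so that the $i$--line through $p_1$ and the $b$--line through $p_2$ project to the same ribbon, one replaces $r$ by $r-f$ on a small disk about $p_1$ and by $r+f$ on a small disk about $p_2$, where $f$ is a nonnegative bump supported near the centre. This makes $r(p_1)<r(p_2)<r(p_3)$ at the triple point while creating exactly two new transverse double points of opposite sign along that one ribbon (Figure~\ref{figmodifradius}); no further analysis of the oriented local model is needed.
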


In \cite{DaSca}, Daemi and Scaduto minored the difference $c_4^+(K)-g_s(K)$ for the connected sums of copies of the knot $7_4$.

\begin{theorem}[Daemi--Scaduto]
 For any positive integer $n$, $c_4^+(\sharp^n7_4)-g_s(\sharp^n7_4)\geq n/5$.
\end{theorem}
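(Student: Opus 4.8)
The statement is quoted from \cite{DaSca}; I indicate the gauge-theoretic strategy one would follow to establish it. The plan is to produce, for every knot $K$, a nonnegative real number $\Gamma_K$ --- a Fr{\o}yshov-type invariant extracted from the Chern--Simons functional on singular $SU(2)$ connections on $(S^3,K)$ --- with three properties: it is a concordance invariant with $\Gamma_{\mathrm{unknot}}=0$; it obeys an \emph{immersed cobordism inequality}; and it grows linearly under connected sum. First I would set up the singular instanton moduli spaces over the orbifold cylinder $\mathbb R\times(S^3,K)$ with a cone-angle singularity along the knot, perturb so that the critical set of the Chern--Simons functional is nondegenerate, and define $\Gamma_K$ from the filtration by critical values, as in the construction of singular instanton Floer homology and its Chern--Simons filtration. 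The point of allowing $\Gamma_K$ to be a genuine non-integer real number is exactly what makes the bound $n/5$ possible.

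The heart of the argument is the immersed cobordism inequality: if $K$ bounds in $B^4$ an immersed surface of genus $g$ with $p$ positive and $q$ negative transverse double points, then, after resolving each double point (tubing, or inserting a standard negative-definite piece) and building the corresponding singular ASD moduli space over the resulting cobordism, one extracts a bound of the shape $c_4^+(K)\ge g_s(K)+\Gamma_K$. Morally, the positive double points of a disk realizing $c_4^+(K)$ must \emph{simultaneously} pay for capping off the knot (this costs at least $g_s(K)$) and for absorbing the Chern--Simons energy that $\Gamma_K$ measures, and these two costs cannot be shared; making this precise requires tracking how the Chern--Simons filtration level shifts across the cobordism, with opposite contributions from positive versus negative double points and from added genus, together with the usual compactness, transversality and bubbling analysis for singular instantons.

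Next I would treat the connected sum. By a neck-stretching argument over $\mathbb R\times(S^3,\sharp^n 7_4)$, splitting the energy of a singular ASD connection among the $n$ summands, one obtains $\Gamma_{\sharp^n 7_4}\ge n\,\Gamma_{7_4}$ (e.g.\ from a superadditivity estimate $\Gamma_{K\sharp K'}\ge\Gamma_K+\Gamma_{K'}$). It then remains to bound $\Gamma_{7_4}$ from below, which is a purely $3$--dimensional matter. Since $7_4$ is the $2$--bridge knot $\mathfrak b(15,4)$, its traceless $SU(2)$ character variety is a finite set of $(15-1)/2=7$ points (Riley), and one computes the Chern--Simons invariants of the associated singular flat connections --- equivalently, one passes to the double branched cover $\Sigma_2(7_4)=L(15,4)$ and uses the classical formulas for Chern--Simons invariants of flat connections on lens spaces. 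The relevant minimal positive value is $3/15=1/5$, and this controls $\Gamma_{7_4}$ from below. Combining the three ingredients gives
\[
c_4^+(\sharp^n 7_4)-g_s(\sharp^n 7_4)\ \ge\ \Gamma_{\sharp^n 7_4}\ \ge\ n\,\Gamma_{7_4}\ \ge\ n/5 .
\]

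The main obstacle is unquestionably the immersed cobordism inequality: it demands the full machinery of singular instanton Floer homology over $4$--dimensional cobordisms whose singular locus is an immersed rather than embedded surface, including the correct local model at the double points and a delicate accounting of the Chern--Simons grading. The connected-sum (neck-stretching) estimate is the secondary difficulty. By contrast, the character-variety and Chern--Simons computations for $7_4$, while requiring care, are essentially explicit, and the choice of $7_4$ is dictated precisely by the fact that it is a small-slice-genus knot whose knot group already carries a singular flat connection of small but nonzero Chern--Simons invariant.
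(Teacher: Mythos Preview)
The paper does not prove this theorem at all: it is quoted verbatim as a result of Daemi and Scaduto \cite{DaSca} and used as a black box to deduce the subsequent corollary that $T_s-g_s$ is unbounded for knots. So there is no ``paper's own proof'' to compare your proposal against; your opening sentence already identifies the situation correctly.

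As a sketch of what happens in \cite{DaSca}, your outline is broadly in the right spirit --- a Fr{\o}yshov-type invariant extracted from the Chern--Simons filtration on singular instanton Floer homology, an immersed-cobordism inequality separating the contribution of positive double points from that of genus, superadditivity under connected sum, and the explicit lens-space computation for $\Sigma_2(7_4)=L(15,4)$ yielding the constant $1/5$. For the purposes of \emph{this} paper, however, none of that is needed: the appropriate ``proof'' here is simply the citation, which is exactly what the paper does.
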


\begin{corollary}
 The difference between the $T$--genus and the slice genus, for knots in $S^3$, can be arbitrarily large.
\end{corollary}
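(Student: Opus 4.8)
The plan is to deduce this corollary directly by chaining the three preceding results. Fix the family $K_n=\sharp^n7_4$ of connected sums of $n$ copies of the knot $7_4$ — this is exactly the family for which Daemi and Scaduto produced their lower bound. Each $K_n$ is a knot, hence (vacuously) algebraically split, so all the invariants $g_s$, $T_s$, $c_4^+$, $c_4^b$ are defined and finite on it, and Corollary~\ref{corTgenusclaspnb} applies.

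First I would invoke Corollary~\ref{corTgenusclaspnb} to get $c_4^b(K_n)\le T_s(K_n)$, then combine it with the elementary inequality $c_4^+(K_n)\le c_4^b(K_n)$ recorded in the introduction; together these give $c_4^+(K_n)\le T_s(K_n)$. Subtracting the slice genus and applying the Daemi--Scaduto theorem yields
\[
 T_s(K_n)-g_s(K_n)\ \ge\ c_4^+(K_n)-g_s(K_n)\ \ge\ \frac{n}{5},
\]
which is unbounded as $n\to\infty$. Hence $T_s-g_s$ takes arbitrarily large values on the family $\{K_n\}_{n\ge1}$, which is exactly the assertion.

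There is no real obstacle to overcome here: the substantive new input is the inequality $c_4^b\le T_s$ (Corollary~\ref{corTgenusclaspnb}), and the deep analytic content is entirely in the Daemi--Scaduto estimate, both of which we are free to cite. The only points that merit a word of care are that the comparison is routed through the positive clasp number $c_4^+$ rather than $c_4^b$ itself (harmless, since $c_4^+\le c_4^b$), and that for knots there is no orientation ambiguity in $c_4^+$, so the quantity appearing in the Daemi--Scaduto bound is the same one bounded above by $T_s$. One could equally run the mirror argument through $c_4^-$, but that is not needed.
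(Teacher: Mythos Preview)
Your proof is correct and is exactly the argument the paper intends: chain $c_4^+\le c_4^b\le T_s$ from the introduction and Corollary~\ref{corTgenusclaspnb}, then apply the Daemi--Scaduto bound to $K_n=\sharp^n7_4$ to get $T_s(K_n)-g_s(K_n)\ge n/5$. The paper leaves the corollary unproved precisely because this deduction is immediate from the two results just stated.
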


This raises the question of what can be the difference between the $T$--genus and the balanced 4--dimensional clasp number. In \cite{Miller}, Miller proves the existence of knots with arbitrarily large slice genus and trivial positive and negative 4--dimensional clasp number, which implies that the difference $T_s-c_4^{\pm}$ can be arbitrarily large.

\begin{question}
 Can the difference $T_s-c_4^b$ be arbitrarily large  for knots?
\end{question}

In Section~\ref{secex}, we propose a family of knots that could realize such an unbounded difference, namely the family of twist knots. The 4--dimensional clasp number equals $1$ for all non-slice twist knots; I would conjecture that their $T$--genus raises linearly with the number of twists.

\section{Surfaces, cobordisms and projections}

In this section, we present some manipulations on surfaces and cobordisms and we deduce a comparison of the $T$--genus with the slice genus and the balanced $4$--dimensional clasp number, and a characterization of the slice genus.

On the preimage of a marked normal singular compact surface, a preimage of a triple point is a {\em triple point of type ($b$--$i$)} if it is the intersection of a $b$--line and an $i$--line.

\begin{proposition} \label{propborrotoclasp}
 Let $\Sigma$ be a marked normal singular compact surface in $S^3$ with $b$ borromean triple points and no clasp. Then $\Sigma$ is the radial projection of a properly immersed surface in $B^4$ with $b$ positive and $b$ negative double points.
\end{proposition}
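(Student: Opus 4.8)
The plan is to build the immersed surface in $B^4$ by pushing $\Sigma$ into the interior of $B^4$ in a controlled way, so that all the ribbon singularities get resolved while the borromean triple points each turn into a pair of transverse double points of opposite sign. Concretely, I would use the radial coordinate $t\in[0,1]$ on $B^4$ (with $t=1$ corresponding to $S^3=\partial B^4$) and define a map from $\Sigma$ into $B^4$ whose $t$--coordinate is chosen according to the marking: near a $b$--line the surface should sit at a slightly smaller radius than near the corresponding $i$--line, so that the two sheets get separated in the radial direction. Away from all singularities, $\Sigma$ is embedded in $S^3$ and we can just take $t\equiv 1$; the perturbation only happens in tubular neighborhoods of the singular lines.

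The key steps, in order, are: (1) Set up the local models. Near a ribbon, the two preimage sheets ($b$--sheet and $i$--sheet) are two transverse planar pieces meeting in a line; push the $i$--sheet to a strictly smaller radius over a neighborhood of the $i$--line, using a bump function that is $1$ outside the neighborhood, so the two sheets become disjoint in $B^4$ and the ribbon singularity is removed. Check this is compatible at the endpoints of branched ribbons and along circles (where the marking tells us which preimage to push inward). (2) Near a borromean triple point $p$ with preimages $p_1,p_2,p_3$, each being a $(b$--$i)$ crossing: here three sheets are involved, and the radial pushes prescribed by the three ribbons through $p$ cannot be simultaneously realized consistently — exactly one of the three transverse intersections survives as a genuine double point in $B^4$, but because we are perturbing in a disk's worth of parameters the local picture is that of a standard Borromean-type obstruction, producing precisely one positive and one negative double point near $p$ after a small generic perturbation (this is the analogue of the computation of the triple linking number / the fact that three pairwise-unlinkable circles can still be Borromean). (3) Globalize: the perturbations near distinct singular lines and triple points have disjoint supports, so assemble them into a single properly immersed surface $\widetilde\Sigma\subset B^4$ with $\partial\widetilde\Sigma=L$, whose only singularities are $b$ positive and $b$ negative double points, and whose radial projection is $\Sigma$ by construction.

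The main obstacle is step (2): making rigorous the claim that a borromean triple point contributes exactly one $+$ and one $-$ double point, with the correct signs, after the radial pushes are carried out. One clean way is to argue locally in a $4$--ball neighborhood of $p$ in $B^4$: the three sheets, each being a disk, can be isotoped (rel their behavior outside the neighborhood, which is dictated by the ribbon resolutions already done) so that two of the three pairwise intersections are cleared and the third pair of sheets meets in two points; a direct sign computation, using that the ambient orientation of $B^4$ restricts to fixed orientations on $\Sigma$ and that swapping the roles of $b$-- and $i$--lines reverses the radial push, shows these two points have opposite signs. I would also need to double-check orientations throughout — the paper's outward-normal-first convention must be used consistently so that $\partial\widetilde\Sigma = L$ with the right orientation — and verify that no clasp singularities are accidentally created by the pushes (this is where the hypothesis ``no clasp'' on $\Sigma$ is essential, since a clasp's preimages are two $b$--lines or two $i$--lines and cannot be separated by a single radial push). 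The branch points require a separate small check: near a branch point $\Sigma$ is already non-locally-flat in $S^3$ but becomes embedded after an arbitrarily small radial perturbation into $B^4$, contributing no double points.
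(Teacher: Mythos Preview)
Your overall strategy---push $\Sigma$ into $B^4$ via a radius function determined by the $b$/$i$ marking---is exactly the paper's approach, but your execution has two genuine gaps.

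First, you never address non-borromean triple points. A marked normal singular surface may have them, and your step~(1), which treats each ribbon separately, says nothing about what happens where three ribbons meet. The paper fixes this cleanly by defining a \emph{global} radius function $r:\tilde\Sigma\to(0,1]$ sending $i$--lines into $(0,\tfrac12)$, $b$--lines into $(\tfrac12,1)$, and every $(b\text{--}i)$ crossing to exactly $\tfrac12$. At a non-borromean triple point only one of the three preimages is of type $(b\text{--}i)$; the other two are $(b\text{--}b)$ and $(i\text{--}i)$, so the three preimages land at three distinct radii and the triple point is resolved with no double points at all. Your local ``push the $i$--sheet'' picture does not make this automatic.

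Second, your step~(2) is muddled: you write that ``exactly one of the three transverse intersections survives as a genuine double point'' and then that this produces ``one positive and one negative double point''. These cannot both be true. What actually happens in the paper's argument is that after the global radius function above, a borromean triple point \emph{remains} a triple point in $B^4$ (all three preimages sit at radius $\tfrac12$). One then perturbs by subtracting a bump $f$ near one preimage $p_1$ and adding $f$ near another preimage $p_2$ lying on the same ribbon line. This separates the two corresponding sheets along that ribbon, but along each of the other two ribbons the radius profiles now cross once, yielding exactly two transverse double points; the opposite signs come directly from the fact that one bump is $+f$ and the other $-f$. No appeal to ``Borromean-type obstruction'' or triple linking is needed---it is a direct local computation.

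Finally, your remark that ``a clasp's preimages are two $b$--lines or two $i$--lines'' is incorrect; a clasp is characterized by both preimage arcs having an endpoint on $\partial\tilde\Sigma$, which is a different issue. The ``no clasp'' hypothesis is used simply so that every double line admits a well-defined $b$/$i$ marking in the first place, allowing the radius function to be defined.
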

\begin{proof}
 Let $\tilde{\Sigma}$ be a compact surface and let $\iota:\tilde{\Sigma}\to S^3$ be a map which is an immersion except at a finite number of branch points, such that $\iota(\tilde\Sigma)=\Sigma$. We will define a radius function on $\tilde\Sigma$ in order to immerse it in $B^4$. Let $r:\tilde\Sigma\to(0,1]$ be a smooth function that sends:
 \begin{itemize}
  \item $\partial\tilde\Sigma$ onto $1$,
  \item branch points and triple points of type ($b$--$i$) onto $\frac12$,
  \item other points in an $i$--line to $(0,\frac12)$,
  \item other points in a $b$--line to $(\frac12,1)$,
 \end{itemize}
 see Figure~\ref{figmapr}. 
 Note that, within the three preimages of a non-borromean triple point, only one has type ($b$--$i$), so that only one is sent onto $\frac12$. The map from $\tilde\Sigma$ to $B^4=\frac{S^3\times[0,1]}{(x,0)\sim(y,0)}$ given by $x\mapsto\left(x,r(x)\right)$ is an embedding except at borromean triple points which are still triple points. It remains to modify the radius function around these borromean triple points.
 
\begin{figure}[htb]
\begin{center}
\begin{tikzpicture} [thick,scale=0.8]
 \draw[red] (0,0) circle (2) (2,1.2) node {$1$};
 \draw[blue] (-1.43,-1.4) -- (1.43,-1.4) (-1.43,1.4) -- (1.43,1.4) (2,0) -- (0.6,-0.2) (0,1.4) node[below right] {$\left(\frac12,1\right)$};
 \draw[rotate=240,blue] (-1.43,-1.4) -- (1.43,-1.4);
 \draw [gray] (0,-1.8) -- (0,-1) (-0.7,0) -- (0.1,0) (-0.3,-0.4) -- (-0.3,0.4) (0.6,-0.2) -- (1.2,0.5) (-1,0) node[below] {$\left(0,\frac12\right)$};
 \draw[vert] (0,-1.4) node {$\bullet$} (0.6,-0.2) node {$\bullet$} node[below left] {$\frac12$};
\end{tikzpicture}
\caption{The radius function} \label{figmapr}
\end{center}
\end{figure}
 
\begin{figure}[htb]
\begin{center}
\begin{tikzpicture}
 \foreach \x in {0,5.5,9,12.5} {\draw[xshift=\x cm] (0,2) node[left] {$0$} -- (0,0) node[left] {$1$} -- (2,0) (0,1) node[left] {$r$};}
 \draw (1,0) node[below] {$\tilde\ell_{ij}$};
 \draw[thick] (2,0) arc (7:173:1) (2,1.76) arc (-7:-173:1) (1,0.86) node {$\scriptstyle\bullet$};
 \draw[very thick,->] (3,1) -- (4,1);
\begin{scope} [xshift=5.5cm]
 \draw (1,0) node[below] {$\tilde\ell_{12}$};
 \draw[thick,rounded corners=1pt] (2,0) arc (7:70:1) arc (20:160:0.37) arc (110:173:1) (2,1.76) arc (-7:-70:1) arc (-20:-160:0.37) arc (-110:-173:1);
\end{scope}
\begin{scope} [xshift=9cm]
 \draw (1,0) node[below] {$\tilde\ell_{23}$};
 \draw[thick,rounded corners=2pt] (2,0) arc (7:173:1) (2,1.76) arc (-7:-70:1) arc (20:160:0.37) arc (-110:-173:1);
\end{scope}
\begin{scope} [xshift=12.5cm]
 \draw (1,0) node[below] {$\tilde\ell_{31}$};
 \draw[thick,rounded corners=2pt] (2,0) arc (7:70:1) arc (-20:-160:0.37) arc (110:173:1) (2,1.76) arc (-7:-173:1);
\end{scope}
\end{tikzpicture}
\caption{Modification of the radius function around a triple point\vspace{0.8ex}\\{\footnotesize The line $\ell_{ij}$ is the ribbon line on $\Sigma$ containing $p$\\whose preimage $\tilde\ell_{ij}$ is made of an $i$--line containing $p_i$ and a $b$--line containing $p_j$.}} \label{figmodifradius}
\end{center}
\end{figure}
 
 Take a smooth function $f:D^2\to[0,\varepsilon]$, where $\varepsilon>0$, such that $f=0$ on a collar neighborhood of $S^1$, $f(0)=\varepsilon$ and~$0$ is the only critical point of $f$. Fix a borromean triple point $p$ on $\Sigma$ and write $p_1,p_2,p_3$ its preimages, chosing the indices so that the $i$--line containing $p_1$ has the same image as the $b$--line containing~$p_2$. Now consider small disk neighborhoods of $p_1$ and $p_2$ and define a new radius function $r'$ by $r'=r-f$ in the neighborhood of $p_1$, $r'=r+f$ in the neighborhood of $p_2$ and $r'=r$ elsewhere. This desingularizes the triple point but adds two double points of opposite signs with preimages on the $i$--line containing $p_1$ and on the $b$--line containing $p_2$, see Figure~\ref{figmodifradius}. If the neighborhoods and $\varepsilon$ are small enough, no other singularity is created. Performing a similar modification around each borromean triple point, we finally get the required immersion. 
\end{proof}

\begin{corollary} \label{corTgenusclaspnb}
 For any algebraically split link $L$, $c_4^b(L)\leq T_s(L)$.
\end{corollary}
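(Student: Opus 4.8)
The plan is to read this off directly from Proposition~\ref{propborrotoclasp}. First I would fix a marked normal singular disks complex $\Sigma=\cup_{1\leq i\leq n}\Sigma_i$ with no clasp bounded by $L$ that realizes the $T$--genus, so that $\Sigma$ carries exactly $b=T_s(L)$ borromean triple points (it may in addition carry non-borromean triple points, which is allowed both in the definition of $T_s$ and in the hypotheses of Proposition~\ref{propborrotoclasp}). Such a complex exists because, by Kaplan's theorem quoted in Section~2, any algebraically split link bounds a $T$--ribbon, hence marked normal singular, disks complex with no clasp, and the minimum defining $T_s(L)$ is attained since the number of borromean triple points is a non-negative integer.

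Next I would regard $\Sigma$ as a single marked normal singular compact surface in $S^3$ — disconnected in general, but that plays no role in Proposition~\ref{propborrotoclasp} — with $b$ borromean triple points and no clasp. Applying the proposition produces a properly immersed surface $\Sigma'\subset B^4$ whose radial projection is $\Sigma$, with exactly $b$ positive and $b$ negative transverse double points and no further singularity. Since the radius function constructed in that proof equals $1$ on $\partial\tilde\Sigma$ and the desingularizing modifications are supported near branch points and triple points, the radial projection restricts to the identity near $\partial B^4$ and $\partial\Sigma'=\partial\Sigma=L$; moreover each component $\Sigma_i'$ is a copy of $\Sigma_i$ and hence a disk, so $\Sigma'$ is an immersed disks complex for $L$.

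Finally I would note that $\Sigma'$ has trivial self-intersection number: its $b$ positive and $b$ negative transverse double points cancel algebraically, and there are no others. Thus $\Sigma'$ is an admissible competitor in the definition of $c_4^b(L)$ carrying $b$ positive double points, whence $c_4^b(L)\leq b=T_s(L)$. There is no genuine obstacle here beyond the bookkeeping just indicated — namely checking that Proposition~\ref{propborrotoclasp} applies verbatim to the possibly disconnected surface $\Sigma$ and that its output preserves boundaries componentwise and the disk type of each component — all of which is transparent from the construction in the proof of that proposition.
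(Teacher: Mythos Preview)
Your argument is correct and is exactly the intended one: the paper states the corollary immediately after Proposition~\ref{propborrotoclasp} with no separate proof, so applying that proposition to a marked normal singular disks complex realizing $T_s(L)$ and reading off the resulting $b$ positive and $b$ negative double points is precisely what is meant. The bookkeeping you spell out (disconnected $\Sigma$, boundary preserved, disk type preserved, zero algebraic self-intersection) is all straightforward from the construction, as you note.
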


For knots, this corollary and the following lemma give a relation between the slice genus and the $T$--genus. Neverthless, the lemma does not hold for algebraically split links and we will give alternative proofs of this relation.

\begin{lemma} \label{lemmagscb}
 For any knot $K$, $g_s(K)\leq c_4^b(K)$.
\end{lemma}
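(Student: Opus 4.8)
The plan is to start from an immersed disk $D$ properly embedded in $B^4$ with $\partial D = K$, realizing $c_4^b(K)$: it has $k := c_4^b(K)$ positive double points, $k$ negative double points, and trivial algebraic self-intersection number (which is automatic here, being the number of positive minus the number of negative). The key observation is that a \emph{pair} consisting of one positive and one negative double point can be cancelled at the cost of adding one to the genus, rather than cancelling each double point individually (which would cost one genus per point, giving only $g_s(K) \le 2c_4^b(K)$). So the heart of the argument is a "paired Whitney-type move" or a direct handle-addition cancelling a $\pm$ pair while increasing genus by exactly one.

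Concretely, I would argue as follows. Given an immersed surface in $B^4$ with a positive double point $p$ and a negative double point $q$, connect $p$ to $q$ by an embedded arc $\alpha$ on the surface (using connectedness of the disk, or first tubing components together) avoiding all other singularities; $\alpha$ has two lifts $\tilde\alpha_1, \tilde\alpha_2$ to the abstract surface, each running from a preimage of $p$ to a preimage of $q$. Take a thin band (a tube) along $\alpha$: surgering the surface along this tube — i.e., removing two small disks around the endpoints-neighborhoods and gluing in an annulus running parallel to $\alpha$ — replaces the two double points $p$ and $q$ by none, because the self-intersection contributions cancel (the signs are opposite), while the genus of the surface goes up by one (an annulus attached along two arcs on the same component adds a handle). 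One must check that the tube can be pushed off itself in $B^4$ so that it creates no new intersections; this is possible because $B^4$ is simply connected and $4$-dimensional, so the normal framing issues and the generic-position issues are unobstructed. Iterating over the $k$ paired-up double points turns $D$ into an embedded surface of genus $\le 0 + k = c_4^b(K)$, whence $g_s(K) \le c_4^b(K)$.

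An alternative, perhaps cleaner, route: smooth the positive double point $p$ in the orientation-compatible way, which replaces it by a small embedded annular band and adds $1$ to the genus; then the negative double point $q$ can be removed \emph{without} cost — for instance because after suitable isotopy the $\pm$ pair together bounds a Whitney disk in $B^4$ (obstruction lies in $\pi_1$, which vanishes), and the Whitney move removes both at once; accounting carefully, one sees the net effect of handling one $+$ and one $-$ point together is $+1$ to the genus. Either way the bookkeeping is: $k$ positive double points $\leftrightarrow$ $+k$ genus, and the $k$ negative ones come for free once paired.

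The main obstacle is the genus bookkeeping in the cancellation step: one must be sure that cancelling a $\pm$ pair costs exactly $+1$ in genus and not $+2$, and that this is precisely where the knot hypothesis (one boundary component) and the triviality of the self-intersection number are used — indeed, the excerpt explicitly flags that the lemma fails for algebraically split links, so the argument cannot be purely local and must exploit that $D$ is a single disk so that the connecting arc $\alpha$ between a $+$ point and a $-$ point exists on the surface. I expect the write-up to be short, either invoking a standard "tubing two double points of opposite sign" lemma or giving the explicit band/Whitney-move construction, with the delicate point being the verification that no new intersections are introduced when pushing the tube into $B^4$, which is guaranteed by $\dim B^4 = 4$ and $\pi_1(B^4)=1$.
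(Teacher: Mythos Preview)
Your first approach---tubing along an arc on the disk joining a positive and a negative double point to remove both at the cost of $+1$ in genus---is exactly the paper's proof. One clarification: the reason the $\pm$ pairing is needed is not that ``self-intersection contributions cancel'' but that the tubed surface remains \emph{oriented} (this is precisely the paper's ``the sign condition ensures that the new surface is again oriented''); also, the tube lives in a small neighborhood of the arc, so no appeal to $\pi_1(B^4)$ or a Whitney disk is needed.
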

\begin{proof}
 Take a disk $\Sigma$ properly immersed in $B^4$ with $\partial\Sigma=K$ that realizes $c_4^b(K)$ (in particular $\Sigma$ has the same number of positive and negative double points). We will desingularize $\Sigma$ by tubing once for each pair of double points with opposite signs. Fix a pair $(p_+,p_-)$ of respectively a positive and a negative self-intersection point of $\Sigma$. Take a path $\gamma$ on $\Sigma$ joining $p_+$ and $p_-$, whose interior does not meet the singularities of $\Sigma$. Inside a neighborhood of $\gamma$ in $B^4$, one can find a solid tube $C=\gamma\times D^2$ where $\gamma\times\{0\}=\gamma$, such that $C$ meets the leaf of $\Sigma$ transverse to $\gamma$ around $p_+$ (resp. $p_-$) along $p_+\times D^2$ (resp. $p_-\times D^2$), and $C$ meets the leaf of $\Sigma$ containing $\gamma$ along $\gamma$. Now remove from $\Sigma$ the interior of the disks $p_\pm\times D^2$ and reglue $\gamma\times S^1$ instead. The sign condition ensures that the new surface is again oriented.
\end{proof}

Proposition~\ref{propborrotoclasp} says in particular that the boundary of a marked normal singular genus--$g$ complex bounds a genus--$g$ slice complex. We now prove the converse in order to get a three-dimensional characterization of the slice genus.

\begin{theorem} \label{thprojection}
 Let $S$ be a properly embedded compact surface in $B^4$. Then $S$ is isotopic, via an isotopy of properly embedded surfaces, to a surface whose radial projection in $S^3$ is a marked normal singular compact surface with no clasp and no borromean triple point.
\end{theorem}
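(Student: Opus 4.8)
The plan is to put $S$ in a generic position with respect to the radial function $\rho\colon B^4\setminus\{0\}\to(0,1]$, $\rho(x)=|x|$, and then read off the self-intersections of the projection. First I would isotope $S$ (rel $\partial$) so that $\rho|_S$ is a Morse function with distinct critical values, all lying in $(0,1)$, and so that $S$ is disjoint from the origin. The level sets $S_t=\rho^{-1}(t)\cap S$ are then links in the spheres $S^3_t=\rho^{-1}(t)$ for all but finitely many $t$, and the radial projection $\pi\colon B^4\setminus\{0\}\to S^3$ (rescaling each sphere to $S^3$) sends $S$ to the union of the images of these slices. A self-intersection of $\pi(S)$ occurs precisely when two points $x,y\in S$ with $\rho(x)=\rho(y)$ have the same image; generically this is a $1$--manifold of double points, with isolated triple points, and the branch/Morse behaviour of $\rho|_S$ controls the endpoints of the double-point arcs.

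Next I would analyze the local models. Near a regular value, a double-point arc of $\pi(S)$ has two preimage arcs in $S$; I want to arrange, after a further small isotopy, that along each such arc one preimage sheet is "radially outermost" near the relevant parameter interval and the other is "innermost," which forces the arc to be a ribbon (its $b$--line is the inner sheet, traveling up toward a later critical point; its $i$--line is on the outer sheet) rather than a clasp. The key point is that a clasp on $\pi(S)$ would correspond to two sheets of $S$ that cross each other's radial level in an interlocked way over a parameter interval; since $S$ is embedded in $B^4$, we can always choose the isotopy to "unlink" the radial orders of the two sheets over short intervals — concretely, one pushes one sheet slightly inward over the ambiguous range. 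This is the mechanism that kills clasps, and it is the heart of the argument. A branch point of $\pi(S)$ appears exactly at a Morse critical point of $\rho|_S$ of index $0$ or $2$ that collides (in $S^3$) with another sheet, i.e., a local minimum or maximum of the radius sitting on another piece of the surface; these are the allowed branch points of a normal singular surface. Triple points come from three sheets sharing a radius and an image; after the sheet-ordering adjustments, at such a point two of the three local double-point lines are ($b$--$i$) and the third is ($b$--$b$) or ($i$--$i$), which is exactly the non-borromean model of Figure~\ref{figtriplepoints}. So no triple point is borromean.

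Finally I would assemble these local pictures: mark the (branched) circles of $\pi(S)$ by declaring the inner preimage the $i$--line and the outer the $b$--line (consistent with the ribbon analysis), check that the resulting surface in $S^3$ is compact, normal singular, clasp-free, borromean-triple-point-free, and has boundary $\partial S=L$, and note that the genus is unchanged since $\pi|_S$ is a homeomorphism onto its image off the singular set, i.e., $\pi$ does not alter the abstract surface $S$ at all — we are only recording how the fixed embedded $S\subset B^4$ sits over the radial coordinate. The main obstacle, as indicated, is the clasp-elimination step: one must show that the radial orders of any two sheets of $S$ can be made "non-interlocking" over every parameter interval by an isotopy of embedded surfaces, using crucially that $S$ is embedded (this is where the hypothesis that $S$ is embedded, not merely immersed, is used). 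I expect this to follow from a standard general-position / finger-move argument in the $4$--ball, organized by the handle decomposition of $S$ induced by $\rho|_S$, handled one critical value at a time.
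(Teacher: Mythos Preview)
Your overall shape is right, but the ``clasp elimination'' step is a real gap, and the marking/triple-point analysis does not go through as written.

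\textbf{Clasps.} You propose to kill clasps by locally ``pushing one sheet inward'' so that the two sheets have a consistent radial order. But pushing a sheet inward is an isotopy of $S$ in $B^4$ that changes the radial function on a whole open set; nothing prevents this from creating new clasps elsewhere, and you give no global scheme for making these pushes compatible. The paper's key move bypasses this entirely: after putting $\rho|_S$ in Morse position, it \emph{reorders the critical values by index}, so that all index--$0$ points lie at some $\varepsilon_0$, all index--$1$ points at $\varepsilon_1>\varepsilon_0$, and all index--$2$ points at $\varepsilon_2>\varepsilon_1$. Then the part of the projection coming from radii $\le\varepsilon_1$ is literally ``disjoint disks $\Delta$ plus bands $B$'', hence a ribbon complex $\Sigma'$ (this is the classical reason ribbon surfaces are exactly band-presented ones). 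The index--$2$ handles contribute a family of disjoint embedded capping disks $D\subset S^3$, made transverse to $\Sigma'$. No clasp can appear in $\Sigma'\cup D$: $\Sigma'$ is ribbon and $D$ is embedded. This handle reordering is the missing idea.

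\textbf{Marking and triple points.} Your scheme ``inner preimage $=i$--line, outer $=b$--line'' is stated both ways in your write-up, and more importantly it cannot be used as a definition: for a genuine ribbon, the $b$/$i$ labels are \emph{forced} by which preimage meets $\partial\tilde\Sigma$, and there is no reason this agrees with radial ordering. Your ``three sheets at three radii'' heuristic therefore does not settle the borromean question for triple points lying on ribbons. In the paper's setup the marking is organised around the decomposition $\Sigma=\Sigma'\cup D$: one checks case by case that every double-point line can be marked so that any preimage arc meeting $\Int(\tilde D)$ is a $b$--line. Since $\Sigma'$ itself has no triple points, every triple point of $\Sigma$ has a preimage in $\Int(\tilde D)$, and that preimage is then the intersection of two $b$--lines --- hence non-borromean. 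Finally, branch points are not index--$0$ or $2$ critical points ``colliding with another sheet''; they arise on $\partial D$, where $b$--lines of $\Sigma'$ terminate on what is boundary for $\Sigma'$ but interior for $\Sigma$.
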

\begin{proof}
 Up to isotopy, we can assume that the radius function on $B^4$ restricts on $S$ to a Morse function whose index $\ell$ critical points take the value $\varepsilon_\ell$ with $0<\varepsilon_0<\varepsilon_1<\varepsilon_2<1$. Hence $S$ can be reorganized as a PL surface with:
 \begin{itemize}
  \item at $r=\varepsilon_0$, a family $\Delta$ of disjoint disks,
  \item at $r\in(\varepsilon_0,\varepsilon_1)$, the boundary $\partial\Delta$,
  \item at $r=\varepsilon_1$, $\partial\Delta\cup B$, where $B$ a disjoint union of embedded bands $[0,1]\times[0,1]$ that meet $\partial\Delta$ exactly along $[0,1]\times\{0,1\}$,
  \item at $r\in(\varepsilon_1,\varepsilon_2)$, the split union of $L=\partial S$ with a trivial link,
  \item at $r=\varepsilon_2$, the disjoint union of $L$ with a disjoint union $D$ of disks bounded by the above trivial link,
  \item at $r\in(\varepsilon_2,1]$, the link $L$.
 \end{itemize}
 Projecting radially $S$ on $S^3$, we obtain a complex $\Sigma=\Sigma'\cup_{\partial D} D$, where $\Sigma'$ is a ribbon complex --- projection of $\Delta\cup B$ --- and $D$ is a family of disjoint embedded disks, disjoint from $L$, that we can assume transverse to $\Sigma'$. Denote $\tilde\Sigma=\tilde\Sigma'\cup\tilde D$ the preimage of $\Sigma$ with the corresponding decomposition.
 
 Let us make a few useful remarks. Since $D$ is embedded, a double point $p\in\partial D$ is a double point of $\Sigma'$. Since $\Sigma'$ is a ribbon complex and $D$ is embedded, all branch points of $\Sigma$ lie on $\partial D$. By definition, branch points are simple. 
 
 We now analyse the lines of double points on $\Sigma$ and their preimages. Let $\gamma\subset\Sigma$ be the closure of a line of double points. Note that it cannot have an endpoint in $\Int(D)$ since $D$ is disjoint from $L$. 
 \paragraph{\underline{Case 1}:} $\gamma$ does not contain any branch point.
 Let $\tilde\gamma$ be one of its two preimages. If $\tilde\gamma\subset\tilde D$, then $\gamma$ is a circle and we mark $\tilde\gamma$ as a $b$--line. 
 If $\tilde\gamma$ meets both $\tilde D$ and $\tilde\Sigma'$, then it contains an interval properly embedded in $\tilde D$, see Figure~\ref{figNoBranchPoint}. It has to be continued on both sides by $b$--lines of $\tilde\Sigma'$. These end either on $\partial\tilde\Sigma$ or on $\partial\tilde D$. In the latter case, they are continued by an interval properly embedded in $\tilde D$. Iterating, we see that $\tilde\gamma$ is either an interval with two endpoints on $\partial\tilde\Sigma$ or a circle. In both cases, we mark it as a $b$--line. Now assume $\tilde\gamma\subset\tilde\Sigma'$. If it is an interval, it has no endpoint on $\partial\tilde D$, so that it is a $b$--line or an $i$--line of $\tilde\Sigma'$. If it is a circle, either the other preimage of $\gamma$ meets $\tilde D$, in which case we mark $\tilde\gamma$ as an $i$--line, or the two preimages of $\gamma$ are contained in $\tilde\Sigma'$ and we assign them different markings.
 \begin{figure}[htb]
 \begin{center}
 \begin{tikzpicture} [scale=0.8]
 \begin{scope}
  \draw[very thick,dashed] (4,-2) -- (5,-2) (3,2) -- (4,2);
  \draw[very thick] (3,2) -- (0,2) arc (90:270:2) -- (4,-2) node[above] {$\partial\tilde\Sigma$};
  \draw[fill=gray!30] (0,0.5) circle (0.8) (2,-0.5) circle (0.6);
  \draw (-0.1,0.95) node {$\tilde D$};
  \draw (-0.8,0.5) node {$\scriptstyle{\bullet}$} -- (0.8,0.5) node {$\scriptstyle{\bullet}$};
  \draw (-0.8,0.5) .. controls +(-1,0) and +(0.3,0.3) .. (-1.5,-1.3) node {$\scriptstyle{\bullet}$};
  \draw (0.8,0.5) .. controls +(0.5,0) and +(0,0.5) .. (2,0.1) node {$\scriptstyle{\bullet}$} .. controls +(0,-0.5) and +(-0.5,0) .. (2.6,-0.5) node {$\scriptstyle{\bullet}$} .. controls +(0.5,0) and +(0,0.5) .. (3,-2) node {$\scriptstyle{\bullet}$};
 \end{scope}
 \begin{scope} [xshift=10cm]
  \draw[very thick,dashed] (4,-2) -- (5,-2) (3,2) -- (4,2);
  \draw[very thick] (3,2) -- (0,2) arc (90:270:2) -- (4,-2) node[above] {$\partial\tilde\Sigma$};
  \draw[fill=gray!30] (0,0.5) circle (0.8) (2,-0.5) circle (0.6);
  \draw (-0.1,0.95) node {$\tilde D$};
  \draw (-0.8,0.5) node {$\scriptstyle{\bullet}$} -- (0.8,0.5) node {$\scriptstyle{\bullet}$};
  \draw (-0.8,0.5) .. controls +(-2,0) and +(-1.5,-1.5) .. (1.55,-0.9) node {$\scriptstyle{\bullet}$};
  \draw (0.8,0.5) .. controls +(0.5,0) and +(0,0.5) .. (2,0.1) node {$\scriptstyle{\bullet}$} .. controls +(0,-0.3) and +(0.5,0.5) .. (1.55,-0.9);
 \end{scope}
 \end{tikzpicture}
 \caption{Case of no branch point and $\tilde\gamma\cap\partial\tilde D\neq\emptyset$} \label{figNoBranchPoint}
 \end{center}
 \end{figure}

 \paragraph{\underline{Case 2}:} $\gamma$ contains a single branch point $p$.
 Let $\tilde\gamma$ be the whole preimage of $\gamma$ and let $\tilde p$ be the preimage of $p$. Near $p$, $\gamma$ lies in $\Sigma'\cap D$. Let $q$ be the first point of $\partial D$ reached from $p$. The preimages of $q$ are $\tilde q_1\in\partial\tilde D$ and $\tilde q_2\in\Int(\tilde\Sigma')$, see Figure~\ref{figSingleBranchPoint}. The curve $\tilde\gamma$ joins $\tilde p$ to $\tilde q_1$ inside $\tilde D$ and to $\tilde q_2$ inside $\tilde\Sigma'$. It is continued from $\tilde q_1$ by a $b$--line in $\tilde\Sigma'$ and from $\tilde q_2$ by an $i$--line in $\Sigma'$. The $b$--line may end on $\partial\tilde D$, in which case we iterate the argument, or on $\partial\tilde\Sigma$. Finally, the preimage of $\gamma$ containing $\tilde q_1$ is a $b$--line ending on $\partial\tilde\Sigma$ and the preimage containing $\tilde q_2$ is an $i$--line contained in $\tilde\Sigma'$.
 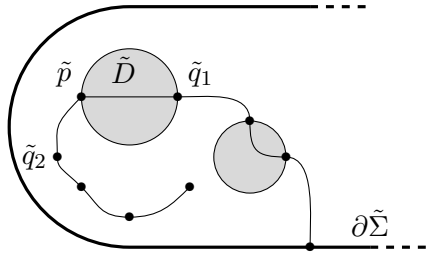
\begin{figure}[htb]
 \begin{center}
 \begin{tikzpicture} [scale=0.8]
  \draw[very thick,dashed] (4,-2) -- (5,-2) (3,2) -- (4,2);
  \draw[very thick] (3,2) -- (0,2) arc (90:270:2) -- (4,-2) node[above] {$\partial\tilde\Sigma$};
  \draw[fill=gray!30] (0,0.5) circle (0.8) (2,-0.5) circle (0.6);
  \draw (-0.1,0.95) node {$\tilde D$};
  \draw (-0.8,0.5) node {$\scriptstyle{\bullet}$} node[above left] {$\tilde{p}$} -- (0.8,0.5) node {$\scriptstyle{\bullet}$} node[above right] {$\tilde{q}_1$};
  \draw (-0.8,0.5) .. controls +(-0.5,-0.5) and +(0,0.5) .. (-1.2,-0.5) node {$\scriptstyle{\bullet}$} node[left] {$\tilde{q}_2$} .. controls +(0,-0.2) and +(-0.3,0.3) .. (-0.8,-1) node {$\scriptstyle{\bullet}$} .. controls +(0.3,-0.3) and +(-0.5,0) .. (0,-1.5) node {$\scriptstyle{\bullet}$} .. controls +(0.5,0) and +(-0.3,-0.3) .. (1,-1) node {$\scriptstyle{\bullet}$};
  \draw (0.8,0.5) .. controls +(0.5,0) and +(0,0.5) .. (2,0.1) node {$\scriptstyle{\bullet}$} .. controls +(0,-0.5) and +(-0.5,0) .. (2.6,-0.5) node {$\scriptstyle{\bullet}$} .. controls +(0.5,0) and +(0,0.5) .. (3,-2) node {$\scriptstyle{\bullet}$};
 \end{tikzpicture}
 \caption{Case of a single branch point $p$} \label{figSingleBranchPoint}
 \end{center}
 \end{figure}
 
 \paragraph{\underline{Case 3}:} $\gamma$ contains two branch points, {\em ie} it is an interval with branch points as endpoints. The two preimages of $\gamma$ are intervals in $\tilde\Sigma$ with the same endpoints. Analyzing the situation from a branch point as in the previous case, we see that one preimage of $\gamma$ is contained in $\tilde\Sigma'$, while the other meets $\tilde D$. We mark the first one as an $i$--line and the second one as a $b$--line.
 
 Finally $\Sigma$ is a marked normal singular complex. Let $p$ be a triple point of $\Sigma$. Since $\Sigma'$ is a ribbon complex, $p$ must have at least one preimage $\tilde p$ in $\Int(\tilde D)$. It follows from the previous discussion that no $i$--line meets $\Int(\tilde D)$, so that $\tilde p$ is the intersection of two $b$--lines. Hence $p$ is non-borromean.
\end{proof}

\begin{corollary} \label{corCharSliceGenus}
 The slice genus of an algebraically split link $L$ equals the minimal genus of a marked normal singular complex for $L$ with no clasp and no borromean triple point.
\end{corollary}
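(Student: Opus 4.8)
The plan is to combine Proposition~\ref{propborrotoclasp} and Theorem~\ref{thprojection}, which together give the two inequalities that make up the equality. Write $m(L)$ for the minimal genus of a marked normal singular complex for $L$ with no clasp and no borromean triple point; the goal is $g_s(L)=m(L)$.

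First I would prove $m(L)\leq g_s(L)$. Take a slice complex $S=\cup_i S_i$ for $L$ realizing $g_s(L)$, so each $S_i$ is properly embedded in $B^4$ with $\partial S_i=L_i$ and $\sum_i g(S_i)=g_s(L)$. Apply Theorem~\ref{thprojection} to $S$: after an isotopy of properly embedded surfaces (which does not change the genus, nor the boundary $L$), the radial projection of $S$ to $S^3$ is a marked normal singular compact surface $\Sigma$ with no clasp and no borromean triple point. Since radial projection is surjective from $S$ onto $\Sigma$ and the genus of a surface is a topological invariant preserved under the projection described in the proof (the PL reorganization does not alter the homeomorphism type of each component), $\Sigma$ is a marked normal singular complex for $L$ of genus $g_s(L)$ with the required restrictions on its singularities. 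Hence $m(L)\leq g_s(L)$. I should double-check that Theorem~\ref{thprojection} is indeed being applied componentwise and that $\partial S_i=L_i$ is respected, but this is immediate from the statement.

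Next, the reverse inequality $g_s(L)\leq m(L)$. Take a marked normal singular complex $\Sigma=\cup_i\Sigma_i$ for $L$ with no clasp and no borromean triple point realizing $m(L)$; thus $\Sigma$ has $b=0$ borromean triple points. Apply Proposition~\ref{propborrotoclasp}: $\Sigma$ is the radial projection of a properly \emph{immersed} surface in $B^4$ with $b$ positive and $b$ negative double points, i.e. with $0$ positive and $0$ negative double points, hence with no double points at all — a properly embedded surface $S$ in $B^4$ with $\partial S=L$ and with the same genus as $\Sigma$. (Here I use that the construction in the proof of Proposition~\ref{propborrotoclasp} replaces $\tilde\Sigma$ by an abstract copy of itself with a radius function, so the genus is unchanged, and that with $b=0$ the map $x\mapsto(x,r(x))$ is already an embedding with no modification needed.) Therefore $S$ is a slice complex for $L$ of genus $m(L)$, giving $g_s(L)\leq m(L)$.

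Combining the two inequalities yields $g_s(L)=m(L)$, which is the assertion. The only subtlety — and the step I would treat with the most care — is the genus bookkeeping: one must make sure that Theorem~\ref{thprojection} produces a projection whose components are homeomorphic to the components of $S$ (so no genus is lost or gained), and conversely that Proposition~\ref{propborrotoclasp} with $b=0$ genuinely yields an embedded, not merely immersed, surface. Both are clear from the constructions in the respective proofs, so no new ideas are required; the corollary is essentially the conjunction of the two preceding results. Note also that the hypothesis that $L$ is algebraically split is exactly what guarantees $g_s(L)$ is finite, so both sides are simultaneously finite or both undefined, and the equality holds in the extended sense as well.
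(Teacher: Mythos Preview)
Your proof is correct and follows exactly the approach the paper intends: the paragraph preceding Theorem~\ref{thprojection} explicitly says that Proposition~\ref{propborrotoclasp} gives one inequality and that Theorem~\ref{thprojection} is proved precisely to supply the converse. The only minor point is phrasing: for the genus bookkeeping you should say that the abstract surface $\tilde\Sigma$ is unchanged in both constructions (the radius/height function just embeds or immerses a copy of $\tilde\Sigma$), rather than arguing via surjectivity of the projection.
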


A {\em cobordism} from a link $L$ to a link $L'$ is a surface $S$ properly embedded in $S^3\times[0,1]$, such that $\partial S=L'\times\{1\}- L\times\{0\}$; the links $L$ and $L'$ are said to be {\em cobordant}. The cobordism $S$ is {\em strict} if it has genus $0$ and distinct components of $L$ belong to distinct components of $S$; note that the relation induced on links is not symmetric. A cobordism is a {\em concordance} if $S$ is a disjoint union of annuli and each annulus has one boundary component in $S^3\times\{0\}$ and the other in $S^3\times\{1\}$; the links $L$ and $L'$ are then said to be {\em concordant}. 

\begin{theorem} \label{thconcordance}
 Let $\Sigma$ be a marked normal singular compact surface in $S^3$ with $b$ borromean triple points and no clasp. Let $S$ be a compact surface properly embedded in $S^3\times[0,1]$ such that $\partial S\cap\left(S^3\times\{1\}\right)=-\partial\Sigma\times\{1\}$. Then, up to an isotopy of $S$ fixing the boundary, the image of $S\cup\left(\Sigma\times\{1\}\right)$ by the projection $S^3\times[0,1]\twoheadrightarrow S^3$ is again a marked normal singular compact surface with $b$ borromean triple points and no clasp. 
\end{theorem}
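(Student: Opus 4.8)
The plan is to reduce everything to a purely local statement about a marked normal singular surface meeting a properly embedded cobordism transversally, and then control the radial projection using Proposition~\ref{propborrotoclasp} together with a Morse-theoretic normal form for $S$. First I would set up coordinates: think of $S^3\times[0,1]$ with the projection $\pi\colon S^3\times[0,1]\twoheadrightarrow S^3$, and regard $\Sigma\times\{1\}$ as sitting over the level $t=1$. By Proposition~\ref{propborrotoclasp}, the surface $\Sigma$ (together with its $b$ borromean triple points and no clasp) is the radial projection of a properly immersed surface $\hat\Sigma$ in a collar $S^3\times[1,1+\delta]$, identified with a small shell of $B^4$, carrying exactly $b$ positive and $b$ negative double points and no other singularity; here I am using the radius-function construction from that proof, rescaled so that the collar has the desired size. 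Gluing $\hat\Sigma$ to $S$ along $\partial\Sigma\times\{1\}$ produces a single properly immersed surface $\hat S$ in $S^3\times[0,1+\delta]\cong S^3\times[0,1]$ whose only singularities are the $b$ pairs of transverse double points coming from $\hat\Sigma$, and whose radial (i.e.\ $t$-coordinate) projection is by construction $\pi(S)\cup\Sigma$ up to reparametrizing $[0,1+\delta]$ back to $[0,1]$.

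Next I would put $\hat S$ into a convenient position for projecting. After a small isotopy fixing the boundary, I can assume the function $t|_{\hat S}$ is Morse away from the double points, that its critical values are distinct and distinct from $1$, and that near each double point of $\hat\Sigma$ the local model is exactly the one used in Figure~\ref{figmodifradius}, so that $t$ has no critical point there. The key point is then the standard fact that for such a surface the radial projection $\pi|_{\hat S}$ is, after a further generic perturbation, a normal singular immersion whose double-point set consists of: (i) the double points already present in $\hat S$, which project to the $b$ borromean triple points with one $i$-line and one $b$-line in each preimage — these are unchanged because the projection is a local diffeomorphism near them; (ii) ribbon-type double curves created where two sheets of $\hat S$ at different $t$-levels project onto each other, which are ribbons (not clasps) precisely because one of the two preimage arcs is the "lower" sheet and runs interior to the projected surface — the usual argument that radial projection of an embedded surface never produces a clasp, only ribbons and branch points; and (iii) branch points arising from the Morse critical points of $t|_{\hat S}$ at minima/maxima, which are simple. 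I would then argue that the borromean triple points of the final projected surface are exactly the $b$ points in (i): any triple point has at least one preimage lying on a sheet that is strictly lowest among the three near that point, and that preimage, being the bottom sheet of an embedded-surface radial projection, lies on a $b$-line coming from two $b$-lines, hence cannot be borromean — this is the same dichotomy used at the end of the proof of Theorem~\ref{thprojection}. Marking the preimages of the circles and branched circles compatibly (lower sheet $=$ $i$-line, where applicable) makes $\pi(\hat S)$ a marked normal singular surface with no clasp and exactly $b$ borromean triple points, and it equals $\pi(S)\cup\Sigma$ after the reparametrization.

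Finally I would record that the isotopy of $\hat S$ used above restricts to an isotopy of $S$ fixing $\partial S$ (the part of the isotopy supported in the collar $S^3\times[1,1+\delta]$ only moves $\hat\Sigma$, which we may keep fixed, so no boundary point of $S$ at $t=1$ is disturbed), which gives the statement as phrased. The main obstacle I anticipate is step (ii)/(iii): making rigorous that the radial projection of the embedded-with-controlled-double-points surface $\hat S$ produces only ribbons, simple branch points, and the prescribed borromean triple points — i.e.\ ruling out clasps and higher-order degeneracies — and in particular checking that no new triple point among a ribbon of $S$ and the $b$-lines of $\hat\Sigma$ can accidentally be borromean. This is essentially the content of the classical "every surface in $B^4$ projects to a ribbon-type singular surface in $S^3$" lemma, combined with the local analysis already carried out in Theorem~\ref{thprojection}; I would handle it by the same case division on the closure of a double-point line (no branch point / one branch point / two branch points) and on where its preimages sit relative to the bottom sheet, transplanting that argument to the present situation where the only extra sheets are the $b$ small fingers of $\hat\Sigma$ near the desingularized triple points.
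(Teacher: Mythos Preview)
Your route is more circuitous than necessary, and step~(i) contains a genuine confusion. The detour through Proposition~\ref{propborrotoclasp} buys you nothing: once you glue $\hat\Sigma$ to $S$ and project, you recover exactly $\Sigma\cup\pi(S)$, which is precisely the object you had to analyze from the start. Worse, your claim that the $2b$ transverse double points of $\hat\Sigma$ ``project to the $b$ borromean triple points'' is incorrect. Near each borromean triple point $p$ of $\Sigma$, the construction in Proposition~\ref{propborrotoclasp} produces \emph{two} double points of $\hat\Sigma$, and each of them projects to an ordinary point on a ribbon line of $\Sigma$ near $p$, not to $p$ itself. The borromean triple point $p$ reappears in the projection simply because the underlying map $\tilde\Sigma\to S^3$ is $3$-to-$1$ there; this has nothing to do with the double points of the lift. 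So the bookkeeping you propose in~(i) does not track what you think it tracks.

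The paper's proof is far shorter and avoids this entirely. It observes that the argument of Theorem~\ref{thprojection} goes through verbatim if the bottom layer of disks $\Delta$ (at the level $r=\varepsilon_0$) is replaced by the given marked normal singular surface $\Sigma$ sitting at $t=1$. One puts $S$ alone into Morse normal form --- bands $B$ at one level, capping disks $D$ at another --- and arranges by general position that $B$ misses the singular set of $\Sigma$ and that $D$ misses the triple points of $\Sigma$. Then one re-runs the same three-case analysis of closures of double-point lines. Every \emph{new} triple point of the projection involves a sheet of $D$, hence has a preimage in $\Int(\tilde D)$ where only $b$-lines meet, and is therefore non-borromean; the $b$ borromean triple points of $\Sigma$ survive unchanged. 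Your final paragraph correctly identifies this case analysis as the crux; the lift $\hat\Sigma$ is simply not needed to reach it, and dropping it also removes the faulty claim in~(i).
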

\begin{proof}
 The proof is essentially the same as for Theorem~\ref{thprojection}. This proof still works if the surface~$S$, at $r=\varepsilon_0$, is not a disjoint union of disks but a marked normal singular complex $S$. One can ask that, in the projected complex $\Sigma$, the bands added at $r=\varepsilon_1$ avoid the singularities of $S$ and the union of disks $D$ avoids the triple points of $S$. The same discussion shows that $\Sigma$ can be marked as a normal singular surface with as many borromean triple points as $S$. Here, we consider $S$ in~$S^3\times[0,1]$ with $0<\varepsilon_2<\varepsilon_1<\varepsilon_0=1$ and we project on $S^3\times\{0\}$. 
\end{proof}

\begin{corollary} \label{corconcordance}
 The $T$--genus of algebraically split links is a concordance invariant.
\end{corollary}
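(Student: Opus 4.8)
The plan is to deduce this immediately from Theorem~\ref{thconcordance}. Let $L$ and $L'$ be concordant algebraically split links; I want to show $T_s(L)=T_s(L')$, and since concordance is a symmetric relation it suffices to prove $T_s(L')\le T_s(L)$. First I would fix a marked normal singular disks complex $\Sigma$ for $L$ with no clasp and exactly $b=T_s(L)$ borromean triple points; such a $\Sigma$ exists because, as recalled in the excerpt, Kaplan's theorem guarantees that any algebraically split link bounds a $T$--ribbon disks complex, so $T_s(L)$ is finite.

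Next I would take a concordance $S\subset S^3\times[0,1]$ between $L$ and $L'$ --- a disjoint union of annuli, one boundary circle of each in each end --- and, after adjusting the orientation conventions and reversing the interval factor if necessary, arrange that $S$ meets $S^3\times\{1\}$ exactly along $-\partial\Sigma\times\{1\}$ and meets $S^3\times\{0\}$ along $L'\times\{0\}$. Since $\Sigma$ is a disjoint union of disks and each annulus of $S$ has a single boundary circle in $S^3\times\{1\}$, the glued surface $S\cup(\Sigma\times\{1\})$ is again a disjoint union of disks, i.e.\ a disks complex, with boundary $L'\times\{0\}$. Applying Theorem~\ref{thconcordance} to $\Sigma$ and $S$, up to an isotopy of $S$ fixing its boundary the image of $S\cup(\Sigma\times\{1\})$ under the projection $S^3\times[0,1]\twoheadrightarrow S^3$ is a marked normal singular compact surface with $b$ borromean triple points and no clasp. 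Radial projection does not change the topological type of the surface, so this image is a marked normal singular disks complex for $L'$ with no clasp and $b=T_s(L)$ borromean triple points, hence a legitimate competitor in the definition of $T_s(L')$; therefore $T_s(L')\le T_s(L)$. Exchanging the roles of $L$ and $L'$ gives the reverse inequality, and hence $T_s(L)=T_s(L')$.

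The argument is essentially a single invocation of Theorem~\ref{thconcordance}, so I do not expect a real obstacle. The only points needing care are bookkeeping: matching the sign convention of Theorem~\ref{thconcordance} when gluing $\Sigma$ onto the top of the concordance (this is why one may have to reverse the interval), and the elementary remark that capping the annular concordance with the disks of $\Sigma$ produces a disks complex rather than a surface of higher genus.
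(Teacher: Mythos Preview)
Your argument is correct and is exactly the intended one: the paper states the corollary immediately after Theorem~\ref{thconcordance} with no further proof, so the deduction you spell out --- gluing a minimal disks complex for $L$ onto the concordance and projecting via Theorem~\ref{thconcordance}, noting that annuli capped by disks are again disks --- is precisely what the paper has in mind.
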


\begin{figure}[htb] 
\begin{center}
\begin{tikzpicture} [scale=0.4]
\begin{scope} [yshift=0.5cm]
 \foreach \t in {0,120,240} {
 \draw[rotate=\t] (-1.77,0.27) arc (135:350:2.5);
 \draw[rotate=\t] (2.46,-1.07) arc (10:105:2.5);}
\end{scope}
 \draw (5.5,0) node {$\sim$};
\begin{scope} [xshift=12cm,scale=0.5]
 \draw (0,-6) .. controls +(-3,3) and +(0,-5) .. (-3,3) .. controls +(0,4) and +(-2,2) .. (0,6); 
 \draw[white,line width=5pt] (-5.9,2) -- (5.9,2);
 \draw (-5.9,2) -- (5.9,2);
 \draw (-6.5,-1.6) -- (-6.5,3) .. controls +(0,5) and +(-5,0) .. (0,8.5) .. controls +(5,0) and +(0,5) .. (6.5,3) -- (6.5,-1.6);
 \draw[white,line width=5pt] (-7.1,2) .. controls +(-2,0) and +(0,2) .. (-10,0) .. controls +(0,-2) and +(-2,0) .. (-6.5,-2) -- (6.5,-2);
 \draw (-7.1,2) .. controls +(-2,0) and +(0,2) .. (-10,0) .. controls +(0,-2) and +(-2,0) .. (-6.5,-2) -- (6.5,-2);
 \draw (7.1,2) .. controls +(2,0) and +(0,2) .. (10,0) .. controls +(0,-2) and +(2,0) .. (6.5,-2);
 \draw (-6.5,-2.4) .. controls +(0,-5) and +(-5,0) .. (0,-8.5) ..controls +(5,0) and +(0,-5) .. (6.5,-2.4);
 \draw[white,line width=5pt] (0,-6) .. controls +(2,-2) and +(0,-4) .. (3,-3) .. controls +(0,5) and +(3,-3) .. (0,6);
 \draw (0,-6) .. controls +(2,-2) and +(0,-4) .. (3,-3) .. controls +(0,5) and +(3,-3) .. (0,6);  
\end{scope}
\begin{scope} [xshift=25cm,scale=0.5]
 %Surfaces
 \draw[blue!10,fill=blue!10] (-6.5,-2.1) -- (-6.5,3) .. controls +(0,5) and +(-5,0) .. (0,8.5) .. controls +(5,0) and +(0,5) .. (6.5,3) -- (6.5,-2.1) .. controls +(0,-5) and +(5,0) .. (0,-8.5) .. controls +(-5,0) and +(0,-5) .. (-6.5,-2.1); 
 \draw[green,fill=green!20] (0,-6) .. controls +(2,-2) and +(0,-4) .. (3,-3) .. controls +(0,5) and +(3,-3) .. (0,6); 
 \draw[red!30,fill=red!30] (-6.5,2) .. controls +(-3,0) and +(0,2) .. (-10,0) .. controls +(0,-2) and +(-2,0) .. (-6.5,-2) -- (2,-2) -- (0,0) -- (-6.5,0) -- (-6.5,2);
 \draw[red!30,fill=red!30] (6.5,2) .. controls +(3,0) and +(0,2) .. (10,0) .. controls +(0,-2) and +(2,0) .. (6.5,-2) --(3,-2) -- (3,0) -- (6.5,0);
 %Noeuds
 \draw[vert,dashed] (0,-6) .. controls +(-3,3) and +(0,-5) .. (-3,3) .. controls +(0,4) and +(-2,2) .. (0,6); 
 \draw[red,dashed] (-6,2) -- (6,2) (2.2,-2) -- (2.5,-2);
 \draw[blue,dashed] (-6.5,-1.9) -- (-6.5,0) (6.5,0) -- (6.5,-1.9);
 \draw[red] (-7.1,2) .. controls +(-2,0) and +(0,2) .. (-10,0) .. controls +(0,-2) and +(-2,0) .. (-6.5,-2) -- (2,-2) (3.5,-2) -- (6.5,-2) .. controls +(2,0) and +(0,-2) .. (10,0) .. controls +(0,2) and +(2,0) .. (7.1,2);
 \draw[blue] (-6.5,-2.1) .. controls +(0,-5) and +(-5,0) .. (0,-8.5) ..controls +(5,0) and +(0,-5) .. (6.5,-2.1) (-6.5,0) -- (-6.5,3) .. controls +(0,5) and +(-5,0) .. (0,8.5) .. controls +(5,0) and +(0,5) .. (6.5,3) -- (6.5,0);
 \draw[green] (0,-6) .. controls +(2,-2) and +(0,-4) .. (3,-3) .. controls +(0,5) and +(3,-3) .. (0,6); 
 %Rubans
 \draw[gray,densely dashed,thick] (2,-2) -- (-2,2) (0,-6) -- (0,6) (-6.5,0) -- (6.5,0);
\end{scope}
\end{tikzpicture}
\end{center}
\caption{The borromean link and a $T$--ribbon disks complex for it} \label{figDisksBorromean}
\end{figure}
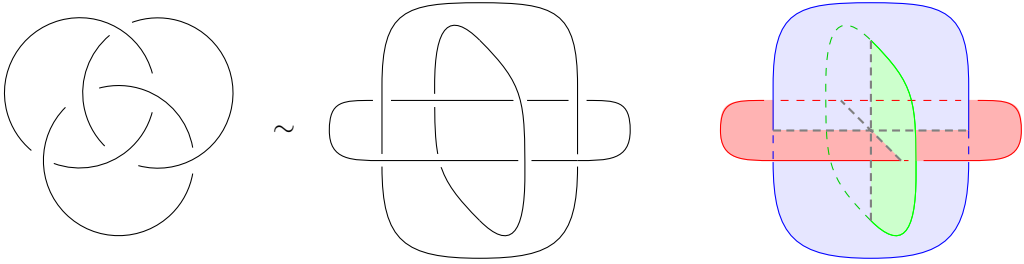 

The next result says that a marked normal singular surface in $S^3$ can be pushed into $S^3\times[0,1]$ in order to isolate the borromean triple points.

\begin{proposition} \label{propcob}
 Let $\Sigma$ be a marked normal singular surface in $S^3$ with $b$ borromean triple points and no clasp. Then there is a surface $S$ properly embedded in $S^3\times[0,1]$ and a disjoint union $\Delta\subset S^3\times\{1\}$ of $b$ $T$--ribbon disks complexes as represented in Figure~\ref{figDisksBorromean} such that $S\cap(S^3\times\{1\})=-\partial\Delta$, $S\cap(S^3\times\{0\})=\partial\Sigma$ and the image of $S\cup\Delta$ by the projection $S^3\times[0,1]\twoheadrightarrow S^3$ is $\Sigma$.
\end{proposition}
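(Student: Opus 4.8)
The plan is to refine the construction of Proposition~\ref{propborrotoclasp}: instead of repairing each borromean triple point by the small perturbation that creates a canceling pair, I trade a whole neighborhood of it for a pushed-up copy of the $T$--ribbon disks complex $\Delta_0$ of Figure~\ref{figDisksBorromean}. Write $\Sigma=\iota(\tilde\Sigma)$, let $p^{(1)},\dots,p^{(b)}$ be the borromean triple points, and fix disjoint $3$--ball neighborhoods $B_j\ni p^{(j)}$ in which $\Sigma$ is the standard borromean model: three disk-sheets meeting along three ribbon arcs concurrent at $p^{(j)}$, marked as in Figure~\ref{figtriplepoints} (so, after indexing the preimages, the $i$--line of $\ell_{12}$ and the $b$--line of $\ell_{31}$ meet at $p_1$, cyclically). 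The crucial step, which I expect to be the main obstacle, is a normalization: up to an isotopy of $\Sigma$ in $S^3$, a neighborhood of each local model in $\Sigma$ is itself a standard copy $\Delta_0^{(j)}$ of the complex of Figure~\ref{figDisksBorromean}, attached to the rest of $\Sigma$ along its Borromean--link boundary; equivalently, $\Sigma$ is built from $b$ disjoint copies of $\Delta_0$ by adjoining, away from the triple points, ribbons, handles and non-borromean triple points. Some such normalization is in fact unavoidable: since $\partial\Delta$ is a disjoint union of $b$ Borromean links and $\pi(\partial\Delta)\subseteq\pi(S\cup\Delta)=\Sigma$, the surface $\Sigma$ must contain such a sublink in the first place. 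Proving the normalization is the assertion that, up to isotopy along $\Sigma$, the only feature forced by a borromean triple point is a single copy of $\Delta_0$ --- a local statement, in keeping with the ball-supported nature of a $\Delta$--move, and the only place where the borromean marking of the $p^{(j)}$ and the absence of clasps are genuinely used.

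Granting the normal form, the rest is the construction of Proposition~\ref{propborrotoclasp} with a changed bookkeeping. Put a radius function on $\tilde\Sigma\setminus\bigcup_j\Int(\Delta_0^{(j)})$ as in that proof, arranged so that $\partial\tilde\Sigma$ goes to level $0$, a collar of $\bigcup_j\partial\Delta_0^{(j)}$ goes to level $1$, and the $b$--lines, $i$--lines and ($b$--$i$) points go respectively toward $0$, toward $1$, and to level $\tfrac12$; near the Borromean links $\Sigma$ is embedded, so there is no conflict. Since this part of $\Sigma$ has no clasp and no borromean triple point, exactly as in the proofs of Proposition~\ref{propborrotoclasp} and Theorem~\ref{thprojection} its lift $S$ is a properly embedded surface in $S^3\times[0,1]$, with every double line and every non-borromean triple point desingularized. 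Let $\Delta=\Delta^{(1)}\sqcup\dots\sqcup\Delta^{(b)}$, where $\Delta^{(j)}$ is a copy of $\Delta_0^{(j)}$ placed at the constant level $1-\varepsilon$ except near its boundary, which is raised along a thin collar up to level $1$; the singularities of $\Delta^{(j)}$ are then exactly those of $\Delta_0^{(j)}$, namely one borromean triple point with the marking inherited from $p^{(j)}$ together with ribbons, so $\Delta$ is of the required type.

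Finally, the three conclusions are built in: $S\cap(S^3\times\{0\})=\partial\Sigma$ and $S\cap(S^3\times\{1\})=\bigcup_j\partial\Delta_0^{(j)}=-\partial\Delta$ with the boundary orientations; $S$ is properly embedded, being the lift of a clasp- and borromean-free surface, and it meets $\Delta$ only along $S^3\times\{1\}$; and the projection $\pi:S^3\times[0,1]\to S^3$ carries the lift onto $\Sigma\setminus\bigcup_j\Int(\Delta_0^{(j)})$ and each $\Delta^{(j)}$ onto $\Delta_0^{(j)}\subset\Sigma$, these pieces exhausting $\Sigma$ and overlapping only along the embedded link $\bigcup_j\partial\Delta_0^{(j)}$, so that $\pi(S\cup\Delta)=\Sigma$ with no singularity beyond those of $\Sigma$. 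The substance of the argument is thus the normalization lemma; the remainder is the same careful but routine play with a radius function, a collar near level $1$, and the pushed-up copies of $\Delta_0$ that already underlies Proposition~\ref{propborrotoclasp} and Theorem~\ref{thprojection}.
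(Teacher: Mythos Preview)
Your overall strategy---isolate each borromean triple point inside a copy of $\Delta_0$, lift the clasp-- and borromean--free complement to an embedded $S$ via a radius/height function, and park the $\Delta_0$'s at level~$1$---is exactly the paper's. The gap is that you have relegated the entire content of the proposition to your ``normalization lemma'' and then written ``Granting the normal form\ldots''. That lemma is the proof; the paper's argument is precisely an explicit construction of this normalization together with the height function.

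Two points about the normalization deserve emphasis. First, it cannot be obtained ``up to an isotopy of $\Sigma$ in $S^3$'': an isotopy preserves the combinatorics of the singular set, so if a single ribbon carries two borromean triple points, or if a $b$--line passes through both a $(b\text{--}b)$ crossing and a $(b\text{--}i)$ crossing, no isotopy separates them. Second, the small $3$--ball model around $p^{(j)}$ is \emph{not} a copy of $\Delta_0$: the three ribbon arcs run out through $\partial B_j$, so the three little boundary circles cut the double locus and do not form a Borromean link. To get an honest $\Delta_0$, each of the three preimage disks must contain a \emph{full} $i$--line and a transverse segment of a $b$--line, with no other singularity; arranging this is the work.

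The paper does this work with the three-step filtration $\tilde C_1\subset\tilde C_1\cup\tilde C_2\subset\tilde C_1\cup\tilde C_2\cup\tilde C_3$ of $\tilde\Sigma$. The pieces of $\tilde C_1$ (collar of $\partial\tilde\Sigma$, disks at $(b\text{--}b)$ points, disks on each $b$--line between consecutive $(b\text{--}i)$ points, disks at branch points and on closed $b$--lines) cut the $b$--lines so that in $\tilde\Sigma_1$ the image is $T$--ribbon and every ribbon carries at most one triple point. Then $\tilde\Sigma_2$, a regular neighborhood of all $i$--lines, is a disjoint union of disks; discarding the nonsingular ones and a collar leaves $\tilde\Sigma_3$, whose disks carry exactly one $b$--line and one $i$--line crossing once---these are the preimages of the $\Delta_0$'s. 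The height function is then read off the filtration ($0$ on $\partial\tilde\Sigma$, $\tfrac13$ on $\partial\tilde\Sigma_1$, $\tfrac23$ on $\partial\tilde\Sigma_2$, $1$ on $\tilde\Sigma_3$), and it simultaneously produces the embedded $S$ and the $\Delta$ at level~$1$. Once you supply this construction, your write-up becomes the paper's proof.
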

\begin{proof}
 Let $\iota:\tilde{\Sigma}\to S^3$ be an immersion except at a finite number of branch points, with image~$\Sigma$. We will define a height function on $\tilde\Sigma$ in order to immerse it in $S^3\times[0,1]$ as a cobordism from $\partial\Sigma$ to a split union of borromean links. 
 
 First define a subsurface $\tilde C_1$ of $\tilde\Sigma$ as the disjoint union of:
 \begin{itemize}
  \item a small disk around each triple point of type ($b$--$b$), namely intersection of two $b$--lines,
  \item for each $b$--line, a small disk around a point of the line between any two consecutive triple points of type ($b$--$i$),
  \item for each closed $b$--line with no triple point, a small disk around any point of the circle,
  \item at each branched point, a small disk meeting the $b$--line along an open interval admitting the branched as an endpoint,
  \item a collar neighborhood of $\partial\Sigma$.
 \end{itemize}
 Set $\tilde\Sigma_1=\tilde\Sigma\setminus\Int(\tilde C_1)$, see Figure~\ref{figpushcobborro}. In restriction to $\tilde\Sigma_1$, $\iota$ is an immersion whose image is a $T$--ribbon genus--$0$ surface such that any ribbon line contains at most one triple point. Now define $\tilde\Sigma_2$ as a neighborhood in $\tilde\Sigma_1$ of all the $i$--lines. Set $\tilde C_2=\tilde\Sigma_1\setminus\Int(\tilde\Sigma_2)$. Then $\tilde\Sigma_2$ is a disjoint union of disks. On some of them, the restriction of $\iota$ to $\tilde\Sigma_2$ has no singularity; define $\tilde C_3$ as their union with a collar neighborhood of the boundary of the others. Set $\tilde\Sigma_3=\tilde\Sigma_2\setminus\Int(\tilde C_3)$. The restriction of $\iota$ to $\tilde\Sigma_3$ has, on each disk, one $b$--line and one $i$--line intersecting once. 
 
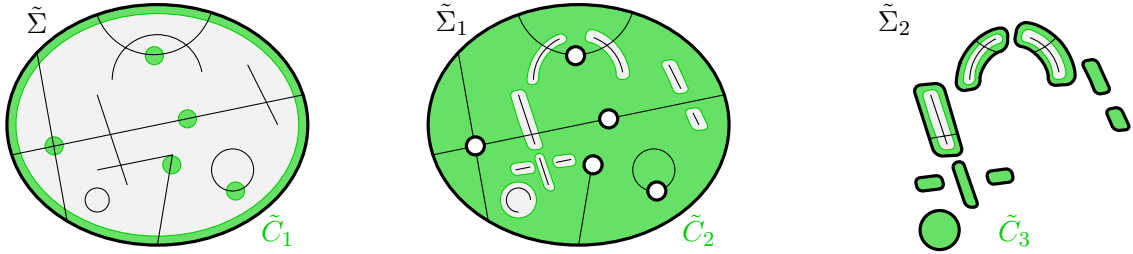
\begin{figure}[htb]
\begin{center}
\begin{tikzpicture} [scale=0.4]
 \begin{scope}
 \draw[fill=vert!60] (5,4) ellipse (5 and 4);
 \draw[vert,fill=gray!10] (5,4) ellipse (4.7 and 3.7);
 \draw[vert,fill=vert!60] (1.57,3.3) circle (0.3) (4.9,6.3) circle (0.3) (6,4.2) circle (0.3) (7.6,1.8) circle (0.3) (5.48,2.67) circle (0.3);
 \draw[very thick] (5,4) ellipse (5 and 4) (1,7.5) node {$\tilde\Sigma$};
 \draw (0.15,3) -- (9.85,5) (1,6.4) -- (2,0.8)
 (3,5) -- (4,2) (8,6) -- (9,4)
 (3,7.65) arc (-160:-17:2) (3.5,5.5) arc (180:10:1.5)
 (3,2.5) -- (5.5,3) -- (5,0)
 (3,1.5) circle (0.4) (7.5,2.5) circle (0.7);
 \draw[vert] (9,0.5) node {$\tilde C_1$};
 \end{scope}
 \begin{scope} [xshift=14cm]
 \draw[fill=vert!60] (5,4) ellipse (5 and 4);
 \draw[vert,fill=gray!10] (3,1.5) circle (0.6);
 \draw[vert,fill=gray!10,rounded corners=2pt] (3.2,2.35) -- (3.6,2.4) -- (3.5,2.8) -- (2.7,2.7) -- (2.8,2.3) -- (3.2,2.35)
 (4.57,2.6) -- (4.95,2.65) -- (4.85,3.05) -- (4.1,2.95) -- (4.2,2.55) -- (4.57,2.6)
 (3,4.1) -- (3.35,3.15) -- (3.85,3.2) -- (3.2,5.2) -- (2.7,5.1) -- (3,4.1) 
 (3.7,2.4) -- (3.9,1.8) -- (4.25,1.8) -- (3.8,3.1) -- (3.5,3) -- (3.7,2.4)
 (8,5.5) -- (8.2,5) -- (8.7,5.1) -- (8.2,6.2) -- (7.7,6.1) -- (8,5.5)
 (8.6,4.27) -- (8.8,3.8) -- (9.35,3.85) -- (9,4.6) -- (8.5,4.5) -- (8.6,4.27)
 (3.38,6) arc (160:105:1.7) -- (4.65,6.75) arc (105:185:1.3) -- (3.3,5.3) arc (185:160:1.7)
 (6.3,6.62) arc (40:0:1.7) -- (6.25,5.5) arc (0:80:1.3) -- (5.3,7.2) arc (80:40:1.7);
 \draw[very thick] (5,4) ellipse (5 and 4) (0.8,7.5) node {$\tilde\Sigma_1$};
 \draw (0.15,3) -- (9.85,5) (1,6.4) -- (2,0.8);
 \draw (3,5) -- (3.55,3.35) (3.7,2.9) -- (4,2) (8,6) -- (8.4,5.2) (8.8,4.4) -- (9,4);
 \draw (3,7.65) arc (-160:-17:2) (3.5,5.5) arc (180:115:1.5) (5.5,6.9) arc (70:10:1.5);
 \draw (2.9,2.5) -- (3.4,2.6) (4.25,2.75) -- (4.75,2.85) (5.5,3) -- (5,0);
 \draw  (3,1.1) arc (270:0:0.4) (7.5,2.5) circle (0.7);
 \draw[very thick,fill=white] (1.57,3.3) circle (0.3) (4.9,6.3) circle (0.3) (6,4.2) circle (0.3) (7.6,1.8) circle (0.3) (5.48,2.67) circle (0.3);
 \draw[vert] (9,0.5) node {$\tilde C_2$};
 \end{scope}
 \begin{scope} [xshift=28cm]
 \draw[very thick,fill=vert!60,yshift=-1cm] (3,1.5) circle (0.65);
 \draw[very thick,fill=vert!60,rounded corners=2pt,xshift=-0.8cm,yshift=-0.75cm,scale=1.1] (3.2,2.35) -- (3.6,2.4) -- (3.5,2.8) -- (2.7,2.7) -- (2.8,2.3) -- (3.2,2.35);
 \draw[very thick,fill=vert!60,rounded corners=2pt,xshift=0.03cm,yshift=-0.8cm,scale=1.1] (4.57,2.6) -- (4.95,2.65) -- (4.85,3.05) -- (4.1,2.95) -- (4.2,2.55) -- (4.57,2.6);
 \draw[very thick,fill=vert!60,rounded corners=2pt,yshift=-0.9cm,xshift=-0.8cm,scale=1.2] (3.7,2.4) -- (3.9,1.8) -- (4.25,1.8) -- (3.8,3.1) -- (3.5,3) -- (3.7,2.4);
 \draw[very thick,fill=vert!60,rounded corners=2pt] 
 (8.6,4.27) -- (8.8,3.8) -- (9.35,3.85) -- (9,4.6) -- (8.5,4.5) -- (8.6,4.27)
 (8,5.5) -- (8.2,5) -- (8.7,5.1) -- (8.2,6.2) -- (7.7,6.1) -- (8,5.5);
 \draw[very thick,fill=vert!60,rounded corners=3pt,xshift=-0.3cm] 
 (2.9,3.73) -- (3.15,2.95) -- (4.1,3) -- (3.4,5.4) -- (2.4,5.3) -- (2.9,3.73);
 \draw[vert,fill=gray!10,rounded corners=2pt,xshift=-0.3cm]
 (3,4.1) -- (3.35,3.15) -- (3.85,3.2) -- (3.2,5.2) -- (2.7,5.1) -- (3,4.1); 
 \draw[xshift=-0.3cm] (3,3.55) -- (3.9,3.7) (3,5) -- (3.55,3.35); 
 \begin{scope} [xshift=0.5cm]
 \draw[very thick,fill=vert!60,rounded corners=3pt]
 (3.15,6) arc (160:95:1.9) -- (4.75,6.5) arc (105:185:1.2) -- (3.1,5.1) arc (190:160:1.9)
 (6.54,6.74) arc (40:-5:1.9) -- (6.1,5.3) arc (-2:83:1.3) -- (5.1,7.4) arc (90:40:1.9); 
 \draw[vert,fill=gray!10,rounded corners=2pt]
 (3.38,6) arc (160:105:1.7) -- (4.65,6.75) arc (105:185:1.3) -- (3.3,5.3) arc (185:160:1.7)
 (6.3,6.62) arc (40:0:1.7) -- (6.25,5.5) arc (0:80:1.3) -- (5.3,7.2) arc (80:40:1.7);
 \draw (3.5,5.5) arc (180:115:1.5) (5.5,6.9) arc (70:10:1.5) (3.7,6.8) arc (-130:-105:2) (5.6,6.4) arc (-70:-43:2);
 \draw (1,7.5) node {$\tilde\Sigma_2$};
 \draw[vert] (5,0.5) node {$\tilde C_3$};
 \end{scope}
 \end{scope}
\end{tikzpicture}
\caption{Pushing a normal singular surface} \label{figpushcobborro}
\end{center}
\end{figure}

 \begin{samepage}
 Let $h:\tilde\Sigma\to[0,1]$ be a smooth function that sends:
 \begin{itemize}
  \item $\partial\tilde\Sigma$ onto $0$, $\partial\tilde\Sigma_1$ onto $\frac13$ and $\partial\tilde\Sigma_2$ onto $\frac23$,
  \item $\Int(\tilde C_1)$ to $(0,\frac13)$, $\Int(\tilde C_2)$ to $(\frac13,\frac23)$ and $\Int(\tilde C_3)$ to $(\frac23,1)$,
  \item $\tilde\Sigma_3$ onto $1$.
 \end{itemize}
 \end{samepage}
 Finally define an immersion $\iota':\tilde\Sigma\to S^3\times[0,1]$ by $\iota'(p)=\big(\iota(p),h(p)\big)$. Set $S=\iota'\left(\tilde\Sigma\setminus\Int(\tilde\Sigma_3)\right)$ and $\Delta=\iota'(\tilde\Sigma_3)$.
\end{proof}

The above results provide a characterization of the $T$--genus which generalizes a result of Kawauchi--Murakami--Sugishita in the case of knots \cite{KMS}.

\begin{corollary} \label{corcobborro}
 The $T$--genus of an algebraically split link $L$ is the smallest integer~$b$ such that there is a strict cobordism from $L$ to a split union of $b$ borromean links.
\end{corollary}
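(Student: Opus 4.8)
The plan is to prove the two inequalities $b_0\leq T_s(L)$ and $T_s(L)\leq b_0$, where $b_0$ denotes the least integer $b$ admitting a strict cobordism from $L$ to a split union of $b$ borromean links (such a $b$ exists since $L$, being algebraically split, bounds a $T$--ribbon disks complex, and the argument below turns any marked normal singular disks complex for $L$ with no clasp into such a cobordism). For $b_0\leq T_s(L)$ I would take a marked normal singular disks complex $\Sigma$ for $L$ with no clasp realizing $T_s(L)$ borromean triple points and apply Proposition~\ref{propcob}, which produces a properly embedded $S\subset S^3\times[0,1]$ with $S\cap(S^3\times\{0\})=L$ and $S\cap(S^3\times\{1\})=-\partial\Delta$, where $\Delta$ is a disjoint union of $T_s(L)$ copies of the $T$--ribbon disks complex for the borromean link of Figure~\ref{figDisksBorromean}; thus $-\partial\Delta$ is a split union of $T_s(L)$ borromean links (which, being invertible, make orientation irrelevant). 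That $S$ is \emph{strict} can be read off the construction: $\Sigma$ is a disjoint union of disks, one per component of $L$, and $S$ is obtained by deleting a disjoint family of open disks from their interiors and embedding the result, hence is a disjoint union of connected planar surfaces, one carrying each $L_i$; so $S$ has genus $0$ with distinct components of $L$ in distinct components of $S$. This yields $b_0\leq T_s(L)$.

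For $T_s(L)\leq b_0$ I would take a strict cobordism $S$ realizing $b_0$, from $L$ to a split union $L'=B_1\sqcup\cdots\sqcup B_{b_0}$ of borromean links, and cap $L'$ with pairwise disjoint copies $\Delta_j$ of the $T$--ribbon disks complex for the borromean link (Figure~\ref{figDisksBorromean}), each with a single borromean triple point, so that $\Delta=\Delta_1\sqcup\cdots\sqcup\Delta_{b_0}$ is a marked normal singular disks complex for $L'$ with no clasp and $b_0$ borromean triple points. Applying Theorem~\ref{thconcordance}, with $\Delta$ in the role of $\Sigma$ and (after an isotopy realizing the invertibility of $L'$, so that $\partial S\cap(S^3\times\{1\})=-\partial\Delta\times\{1\}$) $S$ in the role of $S$, makes the radial projection of $S\cup(\Delta\times\{1\})$ a marked normal singular surface $\Sigma'$ for $L$ with no clasp and $b_0$ borromean triple points. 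It then remains to extract an honest disks complex: since $S$ is strict, each component of $S$ meeting $L\times\{0\}$ is a connected genus-$0$ surface with exactly one boundary circle at height $0$ (a component of $L$) and all its other boundary circles at height $1$, so capping the latter with the corresponding disks of $\Delta$ turns it into a disk. The union $\Sigma_0$ of these capped components is a marked normal singular disks complex for $L$ with no clasp and at most $b_0$ borromean triple points (any remaining ones lie on closed components of $\Sigma'$, which we simply discard). Hence $T_s(L)\leq b_0$, and together with the previous inequality, $T_s(L)=b_0$.

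The main obstacle is not analytic but a matter of bookkeeping: making sure that Proposition~\ref{propcob} outputs a \emph{strict} cobordism and that the surface returned by Theorem~\ref{thconcordance} contains a genuine \emph{disks} complex for $L$. Both reduce to the elementary fact that a connected genus-$0$ surface becomes a disk once all but one of its boundary circles are capped, used in tandem with the component condition built into the definition of a strict cobordism, which guarantees exactly one uncapped boundary circle per relevant component.
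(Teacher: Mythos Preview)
Your proof is correct and follows essentially the same approach as the paper: Proposition~\ref{propcob} for one inequality and Theorem~\ref{thconcordance} together with the $T$--ribbon disks complex of Figure~\ref{figDisksBorromean} for the other. You are simply more explicit than the paper about two points it leaves implicit, namely that the surface $S$ produced by Proposition~\ref{propcob} from a \emph{disks} complex is automatically a strict cobordism, and that capping a strict cobordism with disks and discarding closed components yields a genuine disks complex for $L$.
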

\begin{proof}
 If $\Sigma$ is a marked normal singular disks complex for $L$ with $b$ borromean triple points and no clasp, then Proposition~\ref{propcob} gives a strict cobordism from $L$ to a split union of $b$ borromean links. Reciprocally, given such a cobordism $S$, define $\Sigma$ as the disjoint union of disks bounded by the $b$ borromean links as in Figure~\ref{figDisksBorromean} and apply Theorem~\ref{thconcordance} to get a marked normal singular disks complex for $L$ with $b$ borromean triple points and no clasp.
\end{proof}

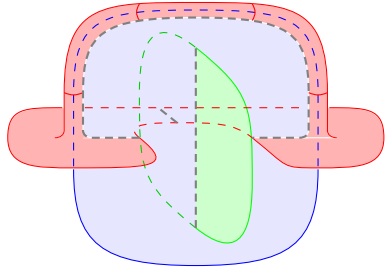
\begin{figure}[htb]
\begin{center}
\begin{tikzpicture} [xscale=0.25,yscale=0.2]
 %Surfaces
 \draw[white,fill=blue!10] (-6.5,-2.1) -- (-6.5,3) .. controls +(0,5) and +(-5,0) .. (0,8.5) .. controls +(5,0) and +(0,5) .. (6.5,3) -- (6.5,-2.1) .. controls +(0,-5) and +(5,0) .. (0,-8.5) .. controls +(-5,0) and +(0,-5) .. (-6.5,-2.1); 
 \draw[green,fill=green!20] (0,-6) .. controls +(2,-2) and +(0,-4) .. (3,-3) .. controls +(0,5) and +(3,-3) .. (0,6); 
 \draw[white,fill=red!30] (-7,2) .. controls +(-2,0) and +(0,2) .. (-10,0) .. controls +(0,-2) and +(-2,0) .. (-6.5,-2) .. controls +(4,0) and +(2,-2) .. (-3,0) -- (-6,0);
 \draw[white,fill=red!30] (7,2) .. controls +(2,0) and +(0,2) .. (10,0) .. controls +(0,-2) and +(2,0) .. (6.5,-2) .. controls +(-2,0) and +(2,-2) .. (3,0) -- (6,0);
 \draw[white,fill=red!30] (-7.5,0) .. controls +(0.3,0) and +(0,-0.3) .. (-7,0.5) -- (-7,3) .. controls +(0,6) and +(-5,0) .. (0,9) .. controls +(5,0) and +(0,6) .. (7,3) -- (7,0.5) .. controls +(0,-0.3) and +(-0.3,0) .. (7.5,0) -- (5.5,0) .. controls +(0.3,0) and +(0,-0.3) .. (6,0.5) -- (6,3) ..  controls +(0,4) and +(5,0) .. (0,8) .. controls +(-5,0) and +(0,4) .. (-6,3) -- (-6,0.5) .. controls  +(0,-0.3) and +(-0.3,0) .. (-5.5,0);
 %Noeuds
 \draw[vert,dashed] (0,-6) .. controls +(-3,3) and +(0,-5) .. (-3,3) .. controls +(0,4) and +(-2,2) .. (0,6); 
 \draw[red,dashed] (-5.9,2) -- (5.9,2);
 \draw[red,dashed] (-3,0) .. controls +(-1,1) and +(-1,0) .. (-1,1) .. controls +(2,0) and +(-1,1) .. (3,0);
 \draw[blue,dashed] (-6.5,-1.9) -- (-6.5,3) .. controls +(0,5) and +(-5,0) .. (0,8.5) .. controls +(5,0) and +(0,5) .. (6.5,3) -- (6.5,-1.9);
 \draw[red] (-7.1,2) .. controls +(-2,0) and +(0,2) .. (-10,0) .. controls +(0,-2) and +(-2,0) .. (-6.5,-2) .. controls +(4,0) and +(2,-2) .. (-3,0);
 \draw[red] (7.1,2) .. controls +(2,0) and +(0,2) .. (10,0) .. controls +(0,-2) and +(2,0) .. (6.5,-2) .. controls +(-2,0) and +(2,-2) .. (3,0);
 \draw[red] (-7.5,0) .. controls +(0.3,0) and +(0,-0.3) .. (-7,0.5) -- (-7,3) .. controls +(0,6) and +(-5,0) .. (0,9) .. controls +(5,0) and +(0,6) .. (7,3) -- (7,0.5) .. controls +(0,-0.3) and +(-0.3,0) .. (7.5,0);
 \draw[blue] (-6.5,-2.1) .. controls +(0,-5) and +(-5,0) .. (0,-8.5) ..controls +(5,0) and +(0,-5) .. (6.5,-2.1);
 \foreach \s in {-1,1} {
 \draw[red] (6*\s,3) .. controls +(0.4*\s,-0.2) and +(-0.4*\s,-0.2) .. (7*\s,3);
 \draw[red] (3*\s,7.8) .. controls +(0.2*\s,0.4) and +(0.2*\s,-0.4) .. (3*\s,8.9);}
 %Rubans
 \draw[gray,densely dashed,thick] (-3,0) -- (-5.5,0) .. controls +(-0.3,0) and +(0,-0.3) .. (-6,0.5) -- (-6,3) .. controls +(0,4) and +(-5,0) .. (0,8) .. controls +(5,0) and +(0,4) .. (6,3) -- (6,0.5) .. controls +(0,-0.3) and +(0.3,0) .. (5.5,0) -- (3,0);
 \draw[gray,densely dashed,thick] (-1,1) -- (-2,2) (0,-6) -- (0,6);
\end{tikzpicture}
\caption{Ribbon complex for the borromean link} \label{figSeifertBorro}
\end{center}
\end{figure}

\begin{corollary} \label{corslicegenusTgenus}
 For any algebraically split link $L$, $g_s(L)\leq T_s(L)$.
\end{corollary}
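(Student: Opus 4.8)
The plan is to produce a slice complex for $L$ of genus $T_s(L)$ by capping off, with slice complexes for the borromean link, the cobordism furnished by Corollary~\ref{corcobborro}. Put $b=T_s(L)$. By Corollary~\ref{corcobborro} there is a strict cobordism $S\subset S^3\times[0,1]$ from $L$, lying on $S^3\times\{0\}$, to a split union $L'$ of $b$ copies of the borromean link, lying on $S^3\times\{1\}$. Strictness forces $S$ to have genus $0$, so every component of $S$ is a planar surface whose boundary consists of exactly one component of $L$ together with some components of $L'$.

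Next I would record that the borromean link bounds a genus--$1$ slice complex. Indeed, Figure~\ref{figSeifertBorro} exhibits a genus--$1$ \emph{ribbon} complex for it, and, having no clasp and no triple point, this complex is by Proposition~\ref{propborrotoclasp} the radial projection of a properly embedded genus--$1$ surface complex in $B^4$. Since the $b$ borromean summands of $L'$ are split, they bound inside a ball $B^4$ a disjoint union $\Delta$ of $b$ such complexes, of total genus $b$; the components of $\Delta$ are disks and once-punctured tori, each with a single boundary circle.

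Then I would glue the two pieces together. Identifying $\bigl(S^3\times[0,1]\bigr)\cup_{S^3\times\{1\}}B^4$ with $B^4$ and matching $L'\times\{1\}$ with $\partial\Delta$, the surface $S\cup\Delta$ is a complex properly embedded in $B^4$ with boundary $L$, hence a slice complex for $L$. Computing its genus amounts to capping, component by component, the boundary circles of the planar surface $S$ coming from $L'$ by the corresponding components of $\Delta$: filling such a circle with a disk keeps the genus $0$, while filling it with a once-punctured torus raises it by $1$, so $S\cup\Delta$ has genus equal to the genus of $\Delta$, namely $b$. Hence $g_s(L)\le b=T_s(L)$.

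The one point needing care is this last genus count: one must check that the gluing respects the decomposition into components, so that $S\cup\Delta$ is genuinely a complex with $\partial\Sigma_i=L_i$ (strictness of $S$ is exactly what guarantees that each component of $S\cup\Delta$ carries one component of $L$), and that the genus is exactly additive, which relies on the borromean summands being split and bounding genus--$1$ rather than larger pieces. This can be made rigorous by a short Euler-characteristic computation: with $n$ the number of components of $L$ one has $\chi(S)=n-3b$ and $\chi(\Delta)=b$, hence $\chi(S\cup\Delta)=n-2b$, and since $S\cup\Delta$ has $n$ components each with a single boundary circle its genus is $\frac12\bigl(n-\chi(S\cup\Delta)\bigr)=b$. (In fact $g_s$ of the borromean link equals $1$, the lower bound coming from Milnor's triple linking number, but only $\le 1$ is needed here.)
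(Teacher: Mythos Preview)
Your proof is correct and follows essentially the same route as the paper's: take the strict cobordism from $L$ to a split union of $b=T_s(L)$ borromean links given by Corollary~\ref{corcobborro}, cap each borromean link with the genus--$1$ slice complex coming from Figure~\ref{figSeifertBorro}, and observe that the result is a slice complex of genus~$b$. Your added details (invoking Proposition~\ref{propborrotoclasp} to push the ribbon complex into $B^4$, and the Euler-characteristic count) are welcome precision but not a different strategy.

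One small caveat on the Euler-characteristic computation: the formula $\chi(S)=n-3b$ presumes $S$ has exactly $n$ components, whereas the definition of \emph{strict} cobordism only forces distinct components of $L$ into distinct components of $S$; in principle $S$ could have extra components meeting only $L'$. This does not harm the inequality---any such components cap off to closed surfaces that can be discarded, only lowering the genus---and in fact the specific cobordism produced by Proposition~\ref{propcob} (which underlies Corollary~\ref{corcobborro}) does have exactly $n$ components, since it arises from a disks complex. So the computation is ultimately fine, but you may want to remark on this.
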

\begin{proof}
 Figure~\ref{figSeifertBorro} shows that the borromean link bounds a slice complex of genus $1$. Take a strict cobordism from $L$ to a split union of $b=T_s(L)$ borromean links and complete it into a slice complex of genus $b$ for $L$ by gluing a genus--$1$ slice complex to each borromean link.
\end{proof}

\section{$T$--genera and $\Delta$--distance}

The $\Delta$--move on links represented in Figure~\ref{figdeltamove} was introduced by Murakami and Nakanishi \cite{MuNa}. Note that it preserves the linking numbers between the components of the link. The following result gives the converse \cite[Theorem~1.1]{MuNa}. 

\begin{theorem}[Murakami--Nakanishi] \label{thMuNa}
 Two links are related by a sequence of $\Delta$--moves if and only if they have the same number of components and their components have the same pairwise linking numbers.
\end{theorem}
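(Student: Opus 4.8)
The plan is to prove the two directions separately. For the easy direction, suppose $L$ and $L'$ are related by a sequence of $\Delta$--moves. A single $\Delta$--move is supported in a ball meeting the link in three arcs, and inside that ball the three arcs have zero pairwise linking (the move is local and the arcs can be capped off consistently before and after). Since the move changes the link only inside the ball, the number of components and all pairwise linking numbers $\lk(L_i,L_j)$ are unchanged: one checks directly from the before/after pictures in Figure~\ref{figdeltamove} that the algebraic crossing count between any two strands is preserved. Composing moves, $L$ and $L'$ have the same number of components and the same pairwise linking numbers.

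The substantive direction is the converse. First I would reduce to the unlink: it suffices to show that any algebraically split $n$--component link $U_n$ (all pairwise linking numbers zero) can be carried to the trivial $n$--component unlink by $\Delta$--moves, after handling the nonzero linking numbers separately. For the general case, given $L$ with prescribed linking numbers $\lambda_{ij}=\lk(L_i,L_j)$, note that a $\Delta$--move near a pair of parallel strands of $L_i$ and $L_j$ can introduce a single full crossing change between those strands — or, more carefully, a $\Delta$--move performed on two strands of $L_i$ and one strand of $L_j$ realizes a crossing change of $L_j$ across $L_i$, hence changes $\lambda_{ij}$ by $\pm 1$ while leaving other linking numbers fixed. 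So by a sequence of such moves we may assume all $\lambda_{ij}=0$, i.e., $L$ is algebraically split; and if $L'$ has the same linking numbers, it suffices to treat the algebraically split case. Then I would invoke the classical fact that $\Delta$--moves generate the same equivalence relation as \emph{band-pass moves} / are detected by Vassiliev invariants of degree one together with linking numbers; concretely, two algebraically split links are $\Delta$--equivalent because a $\Delta$--move changes the Arf/Sato–Levine type invariants freely and, for algebraically split links, these together with linking numbers form a complete set of $\Delta$--invariants. The cleanest route is a direct geometric argument: using that every knot is $\Delta$--equivalent to the unknot (Murakami–Nakanishi's knot case, proved by unknotting via $\Delta$--moves since a $\Delta$--move can realize a crossing change on a single component), reduce each component $L_i$ to an unknot; then undo the clasps between distinct components — an algebraically split pair of unknots bounds disjoint disks after finitely many $\Delta$--moves, since pairs of oppositely-signed clasps can be removed by a $\Delta$--move (a $\Delta$--move is exactly a "pass move" that eliminates a canceling pair of linking clasps). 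Iterating over all pairs and components, $L$ becomes the $n$--component unlink, and likewise $L'$; hence $L$ and $L'$ are $\Delta$--equivalent.

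The main obstacle is the bookkeeping in the second direction: one must verify that the $\Delta$--moves used to simplify one component do not disturb the (already normalized) linking numbers with the other components, and that the final "disjoint disks" step can be achieved locally. I would handle this by always performing the reducing $\Delta$--moves inside small balls disjoint from the other components where possible, and, when a $\Delta$--move must cross another component, accounting for its effect as a controlled crossing change of linking number $\pm 1$ which is then corrected by a compensating move. This is the only place where real care is needed; the rest is a routine local picture analysis. Since this theorem is quoted from \cite[Theorem~1.1]{MuNa}, in the paper we may simply cite it, but the above is the shape of the original proof.
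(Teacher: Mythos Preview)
The paper does not prove this theorem at all: it is simply quoted from \cite[Theorem~1.1]{MuNa} and used as a black box. You acknowledge this yourself at the end, so in that sense your proposal is already more than the paper provides.

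That said, your sketch of the converse contains a genuine internal contradiction. You first (correctly) argue that a $\Delta$--move preserves all pairwise linking numbers. Then, in the reduction step, you claim that a $\Delta$--move on two strands of $L_i$ and one strand of $L_j$ ``realizes a crossing change of $L_j$ across $L_i$, hence changes $\lambda_{ij}$ by $\pm 1$.'' This is false, and it directly contradicts the easy direction you just proved. Such a $\Delta$--move changes \emph{two} crossings between $L_i$ and $L_j$, with opposite contributions to the linking number, so $\lk(L_i,L_j)$ is unchanged. You therefore cannot reduce to the algebraically split case by $\Delta$--moves; the correct reduction is instead to fix a \emph{model link} with the prescribed linking numbers and show that both $L$ and $L'$ are $\Delta$--equivalent to it.

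Your ``cleanest route'' paragraph is closer to the actual argument in \cite{MuNa}, but it still needs sharpening. The statement that a $\Delta$--move realizes a self-crossing change of a single component is correct (take the two strands at the crossing together with a nearby parallel push-off of one of them as the third strand), and this does let you unknot each component. The delicate step is then unlinking pairs of components with linking number zero: you assert that ``pairs of oppositely-signed clasps can be removed by a $\Delta$--move,'' but a single $\Delta$--move does not do this; what works is that a pair of opposite crossings between $L_i$ and $L_j$ can be cancelled by a $\Delta$--move involving those two crossings and a strand of one of the components running between them. This is the core of Murakami--Nakanishi's argument and deserves a picture, not a one-line assertion.
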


A $\Delta$--move can be realized by {\em gluing a borromean link}, see Figure~\ref{figGlueBorro}. This will be useful for relating the $T$--genera and the invariants $\sd$ and $\rd$ defined in the introduction. The equality $T_s(K)=\sd(K)$ for a knot $K$ was given in \cite[Theorem 2]{KMS}.
\begin{figure}[htb] 
\begin{center}
\begin{tikzpicture}
\begin{scope} [yshift=0.8cm,scale=0.35]
 \foreach \t in {0,120,240} {
 \draw[rotate=\t] (-1.77,0.27) arc (135:350:2.5);
 \draw[rotate=\t] (2.46,-1.07) arc (10:105:2.5);}
 \draw (0,-6) node {Borromean link};
\end{scope}
\begin{scope} [xshift=7cm,scale=0.17]
 \foreach \t in {0,120,240} {
 \draw[rotate=\t] (-1.77,0.27) arc (135:225:2.5);
 \draw[rotate=\t] (1.77,-3.27) arc (315:350:2.5);
 \draw[rotate=\t] (2.46,-1.07) arc (10:105:2.5);
 \draw[rotate=\t,line width=10pt,white] (-14,-6) -- (-7,-6);
 \draw[rotate=\t] (-1.77,-3.27) .. controls +(3,-3) and +(3,0) .. (-7,-6) -- (-14,-6);
 \draw[rotate=\t] (1.77,-3.27) .. controls +(-3,-3) and +(-3,0) .. (7,-6) -- (14,-6);}
 \draw[line width=10pt,white] (-14,-6) -- (-8,-6);
 \draw (-14,-6) -- (-7,-6); 
 \draw (0,-13) node {Gluing a borromean link};
\end{scope}
\end{tikzpicture}
\end{center}
\caption{Borromean link and $\Delta$--move} \label{figGlueBorro}
\end{figure}
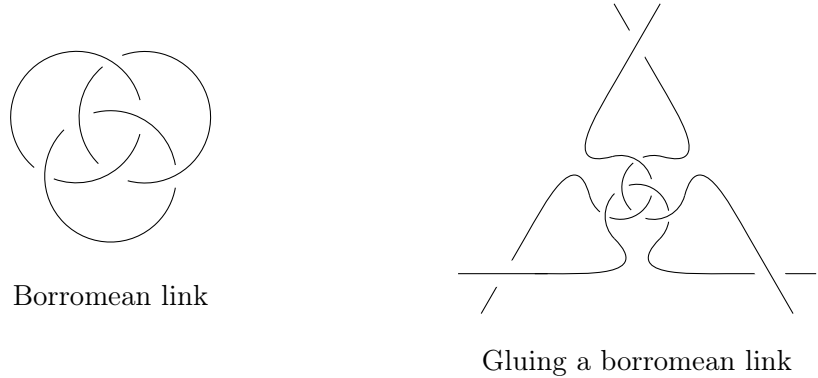

\begin{theorem} \label{thDeltaDistance}
 For any algebraically split link $L$, $T_s(L)=\sd(L)$ and $T_r(L)=\rd(L)$.
\end{theorem}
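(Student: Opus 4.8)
The plan is to prove each equality by establishing its two inequalities separately; the arguments for $T_s$ and for $T_r$ run in complete parallel, so I spell out the slice case and flag the (routine) changes for the ribbon case as I go. It is convenient to record first the endpoints of the induction below: one has $T_s(L)=0$ exactly when $L$ bounds a marked normal singular disks complex with no clasp and no borromean triple point, which by Corollary~\ref{corCharSliceGenus} means precisely that $g_s(L)=0$, i.e.\ that $L$ is slice; likewise $T_r(L)=0$ exactly when $L$ bounds an immersed disks complex with no clasp and no triple point of any kind, i.e.\ when $L$ is ribbon.

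\emph{The inequalities $T_s(L)\le\sd(L)$ and $T_r(L)\le\rd(L)$.} The key point is that a single $\Delta$-move changes $T_s$, and changes $T_r$, by at most $1$. By Figure~\ref{figGlueBorro} a $\Delta$-move on $L$ is realised by gluing a borromean link $B$ to $L$ along three bands running inside a small ball that meets $L$ in three arcs. Starting from a marked normal singular disks complex $\Sigma$ for $L$ with $b$ borromean triple points and no clasp, I would shrink this ball so that inside it $\Sigma$ reduces to the collars of those three arcs, and then glue onto $\Sigma$ the $T$-ribbon disks complex for $B$ drawn in Figure~\ref{figDisksBorromean} --- it has exactly one borromean triple point, no clasp and no non-borromean triple point --- along three embedded bands disjoint from $\Sigma$. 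The result is a marked normal singular disks complex for the new link $L'$ with $b+1$ borromean triple points and no clasp, so $T_s(L')\le T_s(L)+1$; since being related by a $\Delta$-move is a symmetric relation this gives $|T_s(L)-T_s(L')|\le 1$, and the identical construction with ``$T$-ribbon'' in place of ``marked normal singular'' gives $|T_r(L)-T_r(L')|\le 1$. Now if $\sd(L)=k$, a chain $L=L_0,\dots,L_k$ of $\Delta$-moves with $L_k$ slice gives $T_s(L)\le T_s(L_k)+k=k$, and likewise $T_r(L)\le\rd(L)$.

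\emph{The inequalities $\sd(L)\le T_s(L)$ and $\rd(L)\le T_r(L)$.} I would induct on $b:=T_s(L)$. If $b=0$ then $L$ is slice and $\sd(L)=0$. For the step, fix a marked normal singular disks complex $\Sigma$ for $L$ with $b\ge 1$ borromean triple points and no clasp, and choose one of these triple points $p$. On $L$ I perform the $\Delta$-move ``dual to $p$'': gluing a borromean link $B'$ to $L$ near $p$ --- along bands following the sheets of $\Sigma$ that run from $p$ out to $L=\partial\Sigma$ --- with the one-triple-point complex of Figure~\ref{figDisksBorromean} routed so as to carry the \emph{opposite} borromean pattern at $p$. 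The union of $\Sigma$ with this glued complex is a marked normal singular disks complex for the resulting link $L_1$, with no clasp and a priori $b+1$ borromean triple points: the $b$ old ones, together with the single triple point $q$ introduced by $B'$. The routing is chosen precisely so that $p$ and $q$ form a \emph{cancelling pair}: they are joined, through the three bands and the sheets running alongside the local picture at $p$, by a disk along which a finite isotopy rel boundary deletes both $p$ and $q$ while creating no clasp. Hence $L_1$ bounds a disks complex with $b-1$ borromean triple points and no clasp, so $T_s(L_1)\le b-1$; by the inductive hypothesis $\sd(L_1)\le b-1$, and since $L_1$ is obtained from $L$ by one $\Delta$-move, $\sd(L)\le b$. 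Every complex and every modification used here is $T$-ribbon (no clasp, no non-borromean triple point), so the same induction on $b=T_r(L)$ yields $\rd(L)\le T_r(L)$; for a knot this recovers the theorem of Kawauchi--Murakami--Sugishita.

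\emph{Main obstacle.} The one genuinely technical step is the cancelling-pair assertion: that a borromean triple point of a complex in $S^3$, together with the ``inverse'' borromean triple point produced by the dual $\Delta$-move, can be eliminated by an isotopy of complexes rel boundary that creates no clasp (and, in the $T$-ribbon case, no non-borromean triple point). I would prove it from an explicit local model on a regular neighbourhood of the union of the two triple points and the six double-point lines joining them; alternatively, Proposition~\ref{propborrotoclasp} lets one push the configuration into $B^4$, where each borromean triple point becomes a pair of transverse double points of opposite sign and the cancellation reduces to an ordinary Whitney move.
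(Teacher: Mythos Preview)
Your argument for $T_s(L)\le\sd(L)$ and $T_r(L)\le\rd(L)$ agrees with the paper's: a $\Delta$-move is realised by gluing the one-triple-point borromean complex of Figure~\ref{figDisksBorromean}, so it changes each $T$-genus by at most one.

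For the reverse inequalities your route diverges from the paper, and the decisive step --- cancelling the chosen borromean triple point $p$ against the freshly introduced ``dual'' triple point $q$ --- is not established. Neither proposed resolution suffices as stated. The Whitney-move alternative lives in $B^4$ and does not hand you back a complex in $S^3$ with fewer borromean triple points: Proposition~\ref{propborrotoclasp} goes only one way, and the projection result Theorem~\ref{thprojection} requires an \emph{embedded} surface, so after the Whitney move you have no control over what reappears in $S^3$. The ``explicit local model'' is exactly the missing content: the $b$-lines through $p$ may carry other triple points (or, in the normal singular case, be circles or branched lines that never reach $\partial\Sigma$), so the six double-point arcs you want joining $p$ to $q$ need not sit in a clean product neighbourhood, and your bands cannot simply ``follow the sheets'' without first simplifying $\Sigma$. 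The paper treats the two reverse inequalities by genuinely different devices. For $\sd(L)\le T_s(L)$ it passes through the cobordism characterisation of Corollary~\ref{corcobborro}: take a strict cobordism from $L$ to a split union $B^t$, observe that a band sum of two copies of the borromean link is a trivial $3$-link (Figure~\ref{figBorroToTrivial}), and slide those bands back along the cobordism to exhibit a link obtained from $L$ by one $\Delta$-move as strictly cobordant to $B^{t-1}$. For $\rd(L)\le T_r(L)$ it stays with the $T$-ribbon complex in $S^3$: first slide $\partial\Sigma$ along paths avoiding all $i$-lines until every ribbon carries at most one triple point (Figure~\ref{figSlideBoundary}), after which each triple point lies in an explicit borromean sub-complex whose excision is a single $\Delta$-move (Figure~\ref{figRemoveTriplePoint}). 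Your add-and-cancel scheme is morally the inverse of this last manoeuvre, but making it rigorous would require precisely that boundary-sliding simplification, which you have not supplied.
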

\begin{proof}
 The link $L$ can be obtained from a slice link, which bounds a marked normal singular disks complex $\Sigma$ with no borromean triple point and no clasp, by $\sd(L)$ $\Delta$--moves. Realize each of these $\Delta$--moves by gluing a borromean link. The borromean link bounds a complex of three disks that intersect along three ribbons, with a single borromean triple point, see Figure~\ref{figDisksBorromean}. Hence, when gluing a borromean link, we glue to $\Sigma$ such a complex of three disks with three bands. The bands may meet $\Sigma$ and create new ribbons, but we can assume that no other kind of singularity is added. Thus we obtain an immersed disks complex for $L$ with $\sd(L)$ borromean triple points and no clasp, proving $T_s(L)\leq \sd(L)$. Similarly, $L$ can be obtained from a ribbon link, which bounds a ribbon disks complex, by $\rd(L)$ $\Delta$--moves, so that $T_r(L)\leq\rd(L)$. It remains to prove the reverse inequalities. 
 
\begin{figure}[htb] 
\begin{center}
\begin{tikzpicture} 
\begin{scope}
 \draw (0.5,0) node {$\varnothing$} (4,0) circle (0.5);
 \node (A+) at (1,0.3) {};
 \node (A-) at (1,-0.3) {};
 \node (B+) at (3,0.3) {};
 \node (B-) at (3,-0.3) {};
 \draw[->] (A+) -- (B+) node[above,pos=0.5] {{\em birth}};
 \draw[->] (B-) -- (A-) node[below,pos=0.5] {{\em death}};
\end{scope}
\begin{scope} [xshift=9cm]
 \foreach \x in {0,5.4} {
 \draw[dashed] (\x-1.4,0.5) arc (90:270:0.5) (\x,0.5) arc (90:-90:0.5);}
 \draw (-1.4,0.5) arc (90:-90:0.5) (0,0.5) arc (90:270:0.5);
 \foreach \s in {-1,1} {
 \draw (4,0.5*\s) .. controls +(0.3,0) and +(-0.3,0) .. (4.7,0.1*\s) .. controls +(0.3,0) and +(-0.3,0) .. (5.4,0.5*\s);}
 \node (A+) at (1,0.3) {};
 \node (A-) at (1,-0.3) {};
 \node (B+) at (3,0.3) {};
 \node (B-) at (3,-0.3) {};
 \draw[->] (A+) -- (B+) node[above,pos=0.5] {{\em fusion}};
 \draw[->] (B-) -- (A-) node[below,pos=0.5] {{\em fission}};
\end{scope}
\end{tikzpicture}
\end{center}
\caption{Birth, death, fusion and fission moves} \label{figfusionfission}
\end{figure}
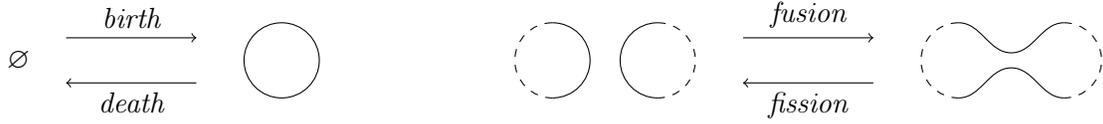 

 We shall prove that $T_s(L)$ can be reduced by gluing a borromean link. There is a strict cobordism $S$ from $L$ to the split union of $t=T_s(L)$ borromean links, which we denote $B^t$. This cobordism can be rearrange into the following pattern, where $\OO^k$ is a trivial $k$--component link, $J$ is a link and the different steps are described in Figure~\ref{figfusionfission}.
 \begin{center}
  \begin{tikzpicture} [xscale=1.5]
   \node (0) at (0,0) {$L$};
   \node (1) at (2,0) {$L\sqcup\OO^k$};
   \node (2) at (4,0) {$J$};
   \node (3) at (6,0) {$B^t\sqcup\OO^\ell$};
   \node (4) at (8,0) {$B^t$};
   \draw[->] (0) -- (1) node[below,pos=0.5] {{\em births}};
   \draw[->] (1) -- (2) node[below,pos=0.5] {{\em fusions}};
   \draw[->] (2) -- (3) node[below,pos=0.5] {{\em fissions}};
   \draw[->] (3) -- (4) node[below,pos=0.5] {{\em deaths}};
  \end{tikzpicture}
 \end{center}
 We can assume that each connected component of $S$ contains one component of $L$ and one component of $J$. One can get a trivial 3--component link as a band sum of two borromean links, see Figure~\ref{figBorroToTrivial}. Perform such a band sum gluing a borromean link $B$ to our link $B^t$. The bands gluing $B$ to $B^t$ can be glued before the fissions and deaths, hence be glued to $J$. Then these bands can be slid to be glued onto parts of the link $L$. Hence we can start by gluing $B$ to $L$ and then perform the remaining of the cobordism. This provides a strict cobordism from $L\sharp B$ to $B^{t-1}\sqcup\OO^3$. Filling in the components of $\OO^3$ with disks, we finally get a strict cobordism from the connected sum $L\sharp B$ to $B^{t-1}$. In other words, we get a cobordism from a link $L'$, obtained from $L$ by gluing a borromean link, to the disjoint union of $t-1$ borromean links; in particular $T_s(L')\leq T_s(L)-1$. It follows that $\sd(L)\leq T_s(L)$.
 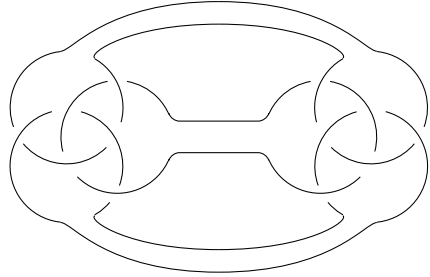
\begin{figure}[htb] 
 \begin{center}
 \begin{tikzpicture} [yshift=0.8cm,scale=0.3]
 \foreach \s in {1,-1} {
 \foreach \t in {90,210,330} {
 \draw[xshift=6*\s cm,xscale=-\s,rotate=\t] (-1.77,0.27) arc (135:350:2.5);
 \draw[xshift=6*\s cm,xscale=-\s,rotate=\t] (2.46,-1.07) arc (10:105:2.5);}
 \draw[white,line width=5pt] (2*\s,-1) -- (2*\s,1);
 \draw[white,line width=5pt] (5.5*\s,3.5) -- (7*\s,3.8);
 \draw[white,line width=5pt] (5.5*\s,-3.5) -- (7*\s,-3.8);}
 \foreach \s in {1,-1} {
 \draw[rounded corners=3pt] (-2.2,1*\s) -- (-2.1,0.7*\s) -- (2.1,0.7*\s) -- (2.2,1*\s);
 \foreach \t in {-1,1} {
 \draw (-7*\t,3.8*\s) .. controls +(0.5*\t,0) and +(-5*\t,0) .. (0,6*\s);
 \draw (-5.5*\t,3.5*\s) .. controls +(-0.3*\t,0.2*\s) and +(-4*\t,0) .. (0,5*\s);}}
 \end{tikzpicture}
 \end{center}
 \caption{A trivial $3$--component link as a band sum of two borromean links} \label{figBorroToTrivial}
 \end{figure}
 
 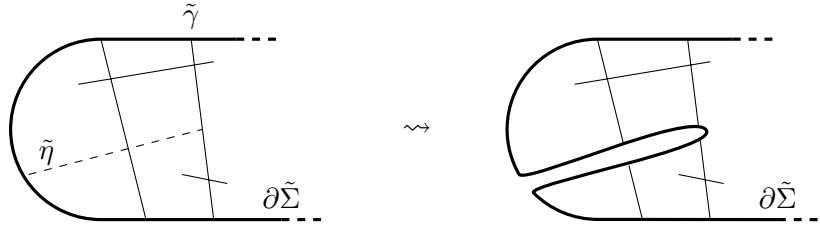
\begin{figure}[htb]
 \begin{center}
 \begin{tikzpicture} [scale=0.6]
  \newcommand{\tourr}{
  \draw[very thick,dashed] (4,-2) node[above] {$\partial\tilde\Sigma$} -- (5,-2) (3,2) -- (4,2);}
  \begin{scope}
  \tourr
  \draw[very thick] (3,2) -- (0,2) arc (90:270:2) -- (4,-2);
  \draw (0,2) -- (1,-2) (2,2) node[above] {$\tilde\gamma$} -- (2.5,-2) (-0.5,1) -- (2.5,1.5) (1.8,-1) -- (2.8,-1.2);
  \draw[dashed] (-1.6,-1) node[above right] {$\tilde\eta$} -- (2.25,0);
  \end{scope}
  \draw (7,0) node {$\rightsquigarrow$};
  \begin{scope} [xshift=11cm]
  \tourr
  \draw[very thick] (3,2) -- (0,2) arc (90:210:2) .. controls +(0.2,-0.2) and +(-0.4,0.4) .. (2.4,0) .. controls +(0.4,-0.4) and +(-0.2,0.2) .. (-1.41,-1.41) arc (225:270:2) -- (4,-2);
  \draw (0,2) -- (0.58,-0.32) (0.7,-0.8) -- (1,-2) (2,2) -- (2.24,0.08) (2.28,-0.24) -- (2.5,-2) (-0.5,1) -- (2.5,1.5) (1.8,-1) -- (2.8,-1.2);
  \end{scope}
 \end{tikzpicture}
 \caption{Sliding $\partial\Sigma$ along a path} \label{figSlideBoundary}
 \end{center}
 \end{figure}
 Now, take a $T$--ribbon disks complex $\Sigma$ for $L$ with $b=T_r(L)$ borromean triple points. Let $\tilde\Sigma$ be its preimage. Assume there is a ribbon $\gamma$ on $\Sigma$ which contains more than one triple point; denote $\tilde\gamma$ the corresponding $b$--line on $\tilde\Sigma$. Take a path $\tilde\eta$ on $\tilde\Sigma$ joining a point of $\partial\tilde\Sigma$ to a point of $\tilde\gamma$ between two preimages of triple points, such that $\tilde\eta$ does not meet any $i$--line --- which is possible since the $i$--lines are disjoint; denote $\eta\subset\Sigma$ the image of $\tilde\eta$. Slide the boundary of $\Sigma$ along $\eta$, see Figure~\ref{figSlideBoundary}. This results in an isotopy of $L$. Since $\tilde\gamma$ may intersect some $b$--lines, some ribbons may have been divided into more ribbons, but no other singularity appears. Moreover, $\gamma$ has been divided into ribbons with less triple points. Finally, we can assume that any ribbon on $\Sigma$ contains at most one triple point.
 
 Fix a triple point $p$ of $\Sigma$. For each preimage $\tilde p$ of $p$, take a neighborhood in $\tilde\Sigma$ of the union of the corresponding $i$--line with a part of the corresponding $b$--line joining $\tilde p$ to $\partial\tilde\Sigma$, see Figure~\ref{figRemoveTriplePoint}. These neighborhoods are three disks with only three ribbons meeting at a borromean triple point. Cutting these three disks amounts to cutting a borromean link. The same modification can be performed by a single $\Delta$--move. This implies $\rd(L)\leq T_r(L)$.
 \begin{figure}[htb]
 \begin{center}
 \begin{tikzpicture}
 \begin{scope} [xscale=0.5,yscale=0.3]
  \draw[gray!40,fill=gray!40] (1.5,0) .. controls +(0,2) and +(0,-1) .. (0.6,2.3) .. controls +(0,1) and +(-0.4,-0.2) .. (2,3.8) .. controls +(0.4,0.2) and +(0,1) .. (3.4,3.7) .. controls +(0,-1) and +(0,3) .. (2.5,0);
  \draw[very thick] (0,0) -- (4,0) (4,6) node[above] {$\partial\tilde\Sigma$} -- (0,6);
  \draw[very thick,dashed] (-1,0) -- (5,0) (5,6) -- (-1,6);
  \draw (2,6) -- (2,0) (1,2.5) -- (3,3.5);
 \end{scope}
 \draw (4,0.9) node {$\sim$};
 \begin{scope} [xshift=6cm,xscale=0.5,yscale=0.3]
  \draw[gray!40,fill=gray!40] (3,0) -- (1,0) .. controls +(1,0) and +(0,1) .. (1,-1.5) .. controls +(0,-1) and +(-1,0) .. (2,-3) .. controls +(1,0) and +(0,-1) .. (3,-1.5) .. controls +(0,1) and +(-1,0) .. (3,0);
  \draw[very thick] (0,0) -- (1,0) .. controls +(1,0) and +(0,1) .. (1,-1.5) .. controls +(0,-1) and +(-1,0) .. (2,-3) .. controls +(1,0) and +(0,-1) .. (3,-1.5) .. controls +(0,1) and +(-1,0) .. (3,0) -- (4,0) (4,6) node[above] {$\partial\tilde\Sigma$} -- (0,6);
  \draw[very thick,dashed] (-1,0) -- (1,0) (3,0) -- (5,0) (5,6) -- (-1,6);
  \draw (2,6) -- (2,-3) (1.5,-1.8) -- (2.5,-1.3);
 \end{scope}
 \end{tikzpicture}
 \caption{Isolate a borromean triple point} \label{figRemoveTriplePoint}
 \end{center}
 \end{figure}
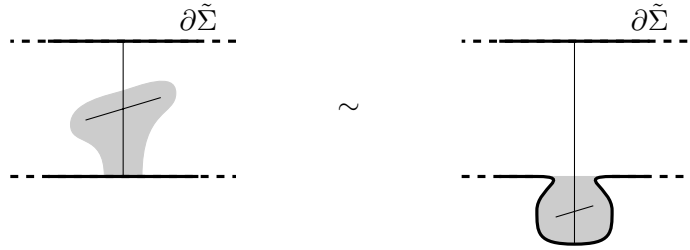
\end{proof}

We recover the result of Corollary~\ref{corslicegenusTgenus} and get the ribbon counterpart of it.
\begin{corollary} \label{corgenusTgenus}
 For any algebraically split link $L$, $g_s(L)\leq T_s(L)$ and $g_r(L)\leq T_r(L)$.
\end{corollary}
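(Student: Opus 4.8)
The plan is to derive both inequalities from Theorem~\ref{thDeltaDistance} together with the remark that a single $\Delta$--move increases neither the slice genus nor the ribbon genus by more than one. By Theorem~\ref{thDeltaDistance} we have $T_s(L)=\sd(L)$ and $T_r(L)=\rd(L)$, so it suffices to prove $g_s(L)\leq\sd(L)$ and $g_r(L)\leq\rd(L)$.

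For the ribbon bound I would fix a ribbon link $L_0$ obtained from $L$ by $\rd(L)$ $\Delta$--moves; as the reverse of a $\Delta$--move is again a $\Delta$--move, $L$ is obtained from $L_0$ by $\rd(L)$ $\Delta$--moves. The link $L_0$ bounds a ribbon disks complex, that is, a ribbon complex of genus $0$. I would then realize each of the $\rd(L)$ moves by gluing a borromean link, as in Figure~\ref{figGlueBorro}, and, at the level of surfaces, glue along the three bands of the move a genus--$1$ ribbon complex bounded by the inserted borromean link (Figure~\ref{figSeifertBorro}). Exactly as in the proof of Theorem~\ref{thDeltaDistance}, the bands can be assumed to meet the current surface only along new ribbons, so that no clasp and no triple point is created; since each inserted complex is fresh and has genus $1$, an Euler characteristic count shows the genus goes up by exactly one at each step. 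After $\rd(L)$ steps this yields a ribbon complex for $L$ of genus $\rd(L)$, hence $g_r(L)\leq\rd(L)=T_r(L)$.

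The slice bound follows from the same construction with ``ribbon'' replaced by ``slice'': one starts from a slice link $L_0$ realizing $\sd(L)$ and bounding a genus--$0$ slice complex $\Sigma_0\subset B^4$, and uses that the borromean link bounds a genus--$1$ slice complex, for instance by pushing the interior of the complex of Figure~\ref{figSeifertBorro} into $B^4$. The bands effecting each $\Delta$--move lie in a collar of $S^3=\partial B^4$ and are pushed slightly inward; kept in general position, they preserve embeddedness and again raise the genus by one, so $g_s(L)\leq\sd(L)=T_s(L)$. This re-derives Corollary~\ref{corslicegenusTgenus}.

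I expect the main obstacle to be the ribbon case, specifically the assertion that the band--sums create no clasp and no triple point on the surface. This is exactly the phenomenon already used in the first half of the proof of Theorem~\ref{thDeltaDistance} --- the bands run cleanly through the surface and hence only produce ribbons --- so I would dispose of it by reference to that argument rather than repeating the general--position discussion; the remaining verifications are routine bookkeeping of genera.
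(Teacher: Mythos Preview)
Your proposal is correct and follows essentially the same approach as the paper: both reduce to Theorem~\ref{thDeltaDistance}, start from a ribbon (resp.\ slice) link at $\Delta$--distance $\rd(L)$ (resp.\ $\sd(L)$) bounding a genus--$0$ complex, and realize each $\Delta$--move by gluing the genus--$1$ ribbon complex of Figure~\ref{figSeifertBorro} for the borromean link. The paper's version is terser, but your added justification that the bands create only ribbons (by reference to the argument in Theorem~\ref{thDeltaDistance}) and your genus bookkeeping are exactly what is implicit there.
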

\begin{proof}
 By Theorem~\ref{thDeltaDistance}, $T_r(L)=\rd(L)$, so that $L$ can be obtained from a ribbon link, bounding a ribbon disks complex, by a sequence of $T_r(L)$ $\Delta$--moves. We have seen that a $\Delta$--move can be realized by gluing a borromean link. This can be achieved by gluing to the ribbon complex represented in Figure~\ref{figSeifertBorro}, which has genus~$1$. The same holds for the slice version.
\end{proof}

\section{Arf invariant and Milnor's triple linking number}

Recall that the Arf invariant is a $\Z/2\Z$--valued concordance invariant of knots which takes the value $0$ on the trivial knot and the value $1$ on the trefoil knot. The following result \cite[Theorem 2]{Robertello} allows to extend the Arf invariant to algebraically split links (Robertello gives this result more generally for the so-called proper links).

\begin{theorem}[Robertello]
 Let $L$ be an algebraically split link. If there are strict cobordisms from two knots $K$ and $K'$ to $L$, then the Arf invariants $\Arf(K)$ and $\Arf(K')$ are equal.
\end{theorem}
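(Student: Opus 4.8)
The plan is to reduce the statement to a comparison of the Arf invariant of knots under the elementary moves that generate strict cobordisms, namely births, deaths, fusions and fissions (Figure~\ref{figfusionfission}), together with isotopy. First I would recall that, by hypothesis, $L$ is algebraically split, and that the strict cobordism condition forces each annulus/planar piece to contribute in a controlled way: a strict cobordism from a knot $K$ to $L$ is built from $K$ by births (adding split trivial circles) and band moves (fusions and fissions), ending on the $n$--component link $L$. Since $K$ and $K'$ both admit strict cobordisms to the same $L$, I would compose the cobordism from $K$ to $L$ with the (reversed, hence no longer strict in general, but still a cobordism) cobordism from $L$ to $K'$; crucially one should only need to compare Arf through the forward direction $K\rightsquigarrow L$ and $K'\rightsquigarrow L$ separately, so it suffices to show that $\Arf$ of the ambient knot is unchanged along a strict cobordism to an algebraically split link — but of course that is not literally true as a link has no Arf invariant, so the correct formulation is: whenever a strict cobordism from a knot to a knot factors through $L$, the two knots have equal Arf.

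The key technical step is therefore a band-move invariance statement. A fusion band on a knot produces a $2$--component link, a fission band then may recombine; I would track a single ``base point'' knot through the cobordism by, at each fission that splits off a component, choosing a band back (a fission is reversible by a fusion) so as to always compare knots. More cleanly, I would use the classical fact that $\Arf$ is additive under band-connected-sum and vanishes on split unknotted circles, so that births and deaths do not change the Arf invariant of the ``main'' knot component, and a fusion followed immediately by the inverse fission changes nothing; the content is that the specific sequence realizing the cobordism to $L$ can be reorganized into such cancelling pairs up to isotopy, because the endpoint $L$ having trivial pairwise linking numbers removes the obstruction (linking numbers are exactly what an Arf-type argument is sensitive to, via the Seifert form mod $2$). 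Equivalently, and perhaps most efficiently, I would invoke that two knots cobordant in $S^3\times[0,1]$ through a connected genus--$0$ surface whose other boundary components can be capped off by disks (which is possible precisely because $L$ bounds disjoint Seifert surfaces after the capping, guaranteed by the algebraic splitness) are concordant, and $\Arf$ is a concordance invariant of knots by the very definition quoted just before this theorem.

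I expect the main obstacle to be making precise the reorganization of the cobordism: given arbitrary strict cobordisms from $K$ and from $K'$ to $L$, one must produce an honest concordance, or a sequence of Arf-preserving moves, between $K$ and $K'$, and the only input one is allowed is that $L$ is algebraically split. The cleanest route is: cap off the components of $L$ at the $S^3\times\{1\}$ end of the cobordism from $K$ with disjoint disks $D$ (not embedded in $S^3$, but embedded in $S^3\times\{1\}$ pushed slightly into a collar — here algebraic splitness is what lets these disks be taken \emph{disjoint} after a further cobordism collar), obtaining a connected genus--$0$ surface in $S^3\times[0,1]$ from $K$ to a split unknot; the same for $K'$; stacking these gives a genus--$0$ surface from $K$ to $K'$ with no extra boundary, which after tube-surgery along the split unknot pieces is a concordance. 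Then $\Arf(K)=\Arf(K')$ by concordance invariance. I would present the details of the capping-and-stacking construction carefully, since that is where the hypothesis is genuinely used, and keep the rest at the level of citing additivity of $\Arf$ and its concordance invariance.
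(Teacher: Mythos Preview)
First, note that the paper does not actually prove this theorem: it is quoted from Robertello's 1965 paper \cite{Robertello} and used as a black box to define $\Arf(L)$ for algebraically split links. There is no proof in the paper to compare against.

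That said, your proposal has a genuine gap. The crucial step is the claim that the components of $L$ can be ``capped off by disjoint disks'' because $L$ is algebraically split. This is false: algebraic splitness means only that the pairwise linking numbers vanish, which is far weaker than $L$ bounding disjoint slice disks. The Borromean link, for instance, is algebraically split but not slice. Without the capping, the stacking construction does not give what you want: gluing a strict cobordism from $K$ to an $n$--component link $L$ with one from $K'$ to $L$ along $L$ produces a connected surface of genus $n-1$ between $K$ and $K'$, not a concordance. More fundamentally, $K$ and $K'$ need not be concordant at all (take $K$ any non-slice knot, $L=K$, and $K'$ a band-sum of $K$ with a split unknot arranged to give a different concordance class), so any strategy whose endgame is to exhibit a concordance between $K$ and $K'$ cannot succeed. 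The theorem is genuinely a statement about the Arf invariant, not a disguised concordance statement.

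Robertello's actual argument proceeds at the level of Seifert forms modulo~$2$: one computes $\Arf(K)$ from a Seifert surface for $K$ obtained by gluing the cobordism to a fixed Seifert surface for $L$, and checks that the Arf invariant of the resulting quadratic form depends only on $L$. The hypothesis that $L$ is algebraically split (or, in Robertello's more general formulation, proper) is exactly what makes the relevant diagonal terms vanish mod~$2$. If you want to salvage your outline, the right move is not to cap $L$ off, but to fix a Seifert surface for $L$ once and for all and compare the two resulting Seifert forms for $K$ and $K'$ directly.
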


For an algebraically split link $L$, define the {\em Arf invariant} of $L$ as $\Arf(L)=\Arf(K)$ for any knot $K$ such that there is a strict cobordism from $K$ to $L$. The following result was established in \cite{KMS} in the case of knots.

\begin{proposition} \label{propArf}
 Let $L$ be an algebraically split link. Let $\Sigma$ be a marked normal singular disks complex for $L$ with no clasp and $b$ borromean triple points. Then $\Arf(L)=b\ mod\ 2$.
\end{proposition}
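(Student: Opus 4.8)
The plan is to reduce the statement about a general marked normal singular disks complex to the known case of knots (the version of Kawauchi--Murakami--Sugishita) by using the cobordism technology already developed in the paper. Concretely, let $\Sigma$ be a marked normal singular disks complex for $L$ with no clasp and $b$ borromean triple points. By Corollary~\ref{corcobborro} (or directly by Proposition~\ref{propcob}), $\Sigma$ gives rise to a strict cobordism from $L$ to a split union $B^b$ of $b$ borromean links. The key observation is that $B^b$ bounds an obvious marked normal singular disks complex with exactly $b$ borromean triple points and no clasp (one borromean triple point per borromean factor, as in Figure~\ref{figDisksBorromean}), so the problem is symmetric: proving $\Arf(L) = b \bmod 2$ is the same as proving $\Arf$ of the split union $B^b$ equals $b \bmod 2$, provided I can show that a strict cobordism does not change the relevant count, and that the Arf invariant of $L$ agrees with that of a knot obtained by banding its components together.

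The first step is to handle the passage from a link to a knot. Since $L$ is algebraically split and $\Sigma$ is a disks complex, one can band the components of $L$ together along bands running parallel to $\Sigma$ to obtain a knot $K$ with a strict cobordism from $K$ to $L$; pushing these bands onto $\Sigma$ shows that $K$ bounds a marked normal singular disk (a genus-$0$ complex with one component) with no clasp and still exactly $b$ borromean triple points — the bands may create extra ribbon lines but, as in the proofs of Theorem~\ref{thconcordance} and Theorem~\ref{thDeltaDistance}, no new triple points or clasps appear. Then $\Arf(L) = \Arf(K)$ by Robertello's theorem, and for the knot $K$ the desired equality $\Arf(K) = b \bmod 2$ is exactly the result of \cite{KMS} quoted in the text.

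So the second step — really the heart of the matter — is to verify that banding the components together can indeed be done while keeping track of the singularities, i.e. that one can choose the bands disjoint from the $i$--lines and meeting the $b$--lines only in ribbon fashion, so that the resulting disk is again marked normal singular with no clasp and the same borromean triple points. This is where I expect the main obstacle: one must argue carefully that the bands, which live near $\partial\Sigma$ and run along $\Sigma$, can be isotoped to avoid all existing double point lines except for transverse ribbon intersections, and that the markings extend consistently. The machinery for this is already present — it is the same kind of analysis of preimages of double point lines carried out in the proof of Theorem~\ref{thprojection} and in the boundary-sliding argument of Figure~\ref{figSlideBoundary} — so the proof should proceed by invoking those manipulations rather than redoing them. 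Once the banding is justified, the proposition follows immediately by combining it with Robertello's theorem and the knot case from \cite{KMS}.

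An alternative, perhaps cleaner route avoiding the explicit banding: use Theorem~\ref{thDeltaDistance} together with the fact that a $\Delta$--move changes the Arf invariant by $1$ (for knots; and for algebraically split links via the strict-cobordism definition of $\Arf$ and the compatibility of $\Delta$--moves with gluing a borromean link, Figure~\ref{figGlueBorro}). Indeed, $L$ is obtained from a slice link $L_0$ (with $\Arf(L_0)=0$) by $b = \sd(L)$ $\Delta$--moves when $\Sigma$ realizes $T_s(L)$; but for a general $\Sigma$ one only gets that $L$ is obtained from a slice link by $b$ $\Delta$--moves (running the argument of Theorem~\ref{thDeltaDistance} with the given $\Sigma$ rather than an optimal one), and each such move flips the Arf invariant, giving $\Arf(L) = b \bmod 2$. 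I would present the banding argument as the primary proof since it stays closest to the disks-complex picture, and mention the $\Delta$--move argument as a remark; either way the only genuinely new content is the bookkeeping of singularities under banding.
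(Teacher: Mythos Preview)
Your primary approach is correct, but it is more laborious than the paper's and takes a genuinely different route. You reduce to the knot case of \cite{KMS} by carefully banding the components of $L$ together \emph{while preserving the marked normal singular structure of $\Sigma$}, which as you rightly flag requires nontrivial bookkeeping (bands in general position with respect to the singular set, checking no new triple points arise, extending the marking). The paper sidesteps all of this: it bands $L$ to a knot $K$ \emph{arbitrarily} (just to have a strict cobordism $K\to L$), applies Proposition~\ref{propcob} to get a strict cobordism $L\to B^b$, composes, and concludes $\Arf(L)=\Arf(K)=\Arf(B^b)$ directly from Robertello's definition. Then it computes $\Arf(B^b)=b\bmod 2$ from scratch by observing that the borromean link bands to a trefoil (Figure~\ref{figBorroToTrefoil}) and using additivity---so it never invokes the knot case of \cite{KMS} as a black box, and never needs the banded surface to remain a marked normal singular disk. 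What your approach buys is a direct connection to the existing literature; what the paper's buys is a self-contained two-line argument with no singular-surface bookkeeping at all. Your alternative $\Delta$--move route is also sound in spirit, but note that extracting ``$L$ is obtained from a slice link by exactly $b$ $\Delta$--moves'' from an arbitrary marked normal singular $\Sigma$ (not $T$--ribbon) really goes through Proposition~\ref{propcob} again rather than the boundary-sliding argument of Theorem~\ref{thDeltaDistance}, so it collapses to essentially the paper's proof once you also verify that a single $\Delta$--move flips $\Arf$.
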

\begin{proof}
 Let $K$ be a knot obtained from $L$ by merging the component, so that there is a strict cobordism from $K$ to $L$. By Proposition~\ref{propcob}, there is a strict cobordism from $L$ to a split union $B^b$ of $b$ borromean links. Composing these cobordisms gives a strict cobordism from $K$ to $B^b$. Hence $\Arf(K)=\Arf(L)=\Arf(B^b)$. Now, the trefoil knot can be obtained from the borromean link by merging the three components, see Figure~\ref{figBorroToTrefoil}, hence $\Arf(B)=1$. This concludes since the Arf invariant is additive under connected sum and thus under split union.
\end{proof}

\begin{figure}[htb] 
\begin{center}
\begin{tikzpicture} [scale=0.35]
\begin{scope}
 \foreach \t in {90,210,330} {
 \draw[rotate=\t] (-1.77,0.27) arc (135:350:2.5);
 \draw[rotate=\t] (2.46,-1.07) arc (10:105:2.5);}
 \foreach \s in {1,-1} {
 \draw[white,line width=5pt] (0.5,3.5*\s) -- (-1,3.8*\s) (3,2*\s) -- (3.8,1*\s);
 \draw (-1,3.8*\s) .. controls +(0.5,0) and +(-1.3,0.7*\s) .. (3,5.2*\s) .. controls +(1.8,-\s) and +(-0.3,0.5*\s) .. (3.8,1*\s);
 \draw (0.5,3.47*\s) .. controls +(-0.3,0.2*\s) and +(-0.9,0.5*\s) .. (2.5,4.3*\s) .. controls +(1.3,-0.7*\s) and +(0.5,-0.4*\s) .. (3,2*\s);}
\end{scope}
 \draw (8,0) node {$\sim$};
\begin{scope} [xshift=15cm]
 \foreach \t in {90,210,330} {
 \draw[rotate=\t] (-1.8,0.3) .. controls +(-2.4,-2.6) and +(-1,-3.1) .. (1.1,0.1);}
\end{scope}
\end{tikzpicture}
\end{center}
\caption{A trefoil knot as a band sum of a borromean link} \label{figBorroToTrefoil}
\end{figure}
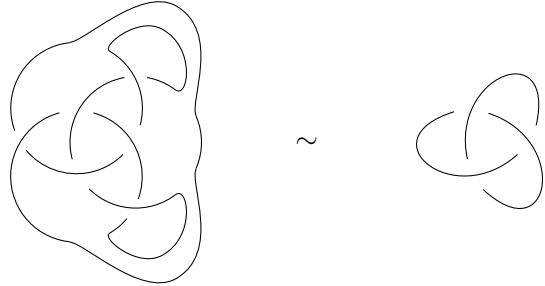

This result provides an interesting geometric realization of the Arf invariant of an algebraically split link $L$. The Arf invariant of $L$ is not only the parity of $T_s(L)$, but more generally the parity of the number of borromean triple points on any marked normal singular disks complex for $L$. This was first noticed by Kaplan for knots and $T$--ribbon disks \cite[Theorem 5.3]{Kaplan}.

We now give an expression of Milnor's triple linking number in terms of borromean triple points. The Milnor invariant $\mu(L)$ of an ordered $3$--component link $L=L_1\sqcup L_2\sqcup L_3$ is a $\Z$--valued concordance invariant introduced by Milnor; we refer the reader to \cite{Milnor} (see also \cite{Meilhan}) for its definition and main properties. Given a marked singular disks complex $D=D_1\cup D_2\cup D_3$ for $L$ with no clasp, define the {\em borromean triple point number} $n_{bt}(D)$ of $D$ as the number of borromean triple points of $D$ involving its three components, counted with signs. Note that changing the orientation of one component of $L$, as well as permuting two components, changes the sign of $n_{bt}(D)$; this also holds for $\mu(L)$.

\begin{proposition} \label{propMilnor}
 Let $L=L_1\sqcup L_2\sqcup L_3$ be an ordered algebraically split 3--component link. Let $D=D_1\cup D_2\cup D_3$ be a marked singular disks complex for $L$. Then $\mu(L)=n_{bt}(D)$.
\end{proposition}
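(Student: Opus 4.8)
The plan is to reduce the statement to the normal-form for disks complexes established by Proposition~\ref{propcob} together with the additivity of Milnor's $\mu$-invariant under the moves that appear in a strict cobordism. First I would observe that both sides of the claimed equality behave well under isotopy of $D$ and under the elementary modifications of a marked singular disks complex that do not change the number of borromean triple points involving the three components (sliding boundary along a path, subdividing a ribbon, adding or removing ribbons between $D_i$ and $D_j$ with $i,j$ not exhausting $\{1,2,3\}$). So it suffices to compute $\mu(L)$ when $D$ is already in the special form of Proposition~\ref{propcob}: pushing $D$ into $S^3\times[0,1]$ yields a strict cobordism $S$ from $L$ to a split union $B^b$ of $b$ borromean links, where $b=n_{bt}(D)$ counted \emph{without} signs once we have separated the triple points. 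The signed count enters because giving a borromean link the ``wrong'' orientation (or swapping two of its components) flips the sign of its $\mu$, exactly as noted in the paragraph before the statement.

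Next I would invoke the concordance (indeed, cobordism) invariance of $\mu$ for algebraically split $3$-component links: a strict cobordism from $L$ to $L'$ between algebraically split links preserves $\mu$, because $\mu$ of a $3$-component link with vanishing pairwise linking numbers is a finite-type/concordance invariant that is additive under split union and vanishes on split and on trivial components. Thus $\mu(L)=\mu(B^b)$ computed with the induced orientations. Since the borromean link $B$ has $\mu(B)=\pm1$ (the standard normalization, matching the sign convention for a single borromean triple point as in Figure~\ref{figtriplepoints}), and $\mu$ is additive under split union, we get $\mu(B^b)=\sum_{\text{triple points}}\varepsilon_j$ where $\varepsilon_j=\pm1$ is the sign of the $j$-th isolated borromean triple point; this sum is precisely $n_{bt}(D)$. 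Here one must check that the ``merging two components into one'' fission/death steps used to build $S$, which were allowed in the proof of Corollary~\ref{corcobborro}, do not involve the third component in a way that would alter $\mu$; arranging the cobordism so that the fissions and deaths occur on trivial split unknots (as in the pattern $L\sqcup\OO^k \to J \to B^t\sqcup\OO^\ell\to B^t$) takes care of this.

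The step I expect to be the main obstacle is the sign bookkeeping: verifying that the sign attached to an isolated borromean triple point in the definition of $n_{bt}(D)$ agrees with the sign of $\mu$ of the resulting oriented borromean link, uniformly across all triple points. Concretely, when Proposition~\ref{propcob} isolates a triple point $p$, the three disks bounding the little borromean link inherit orientations from $D_1, D_2, D_3$ and a marking of the $b$- and $i$-lines; I would fix once and for all the convention (as in Figure~\ref{figmodifradius}, choosing indices so the $i$-line at $p_1$ matches the $b$-line at $p_2$, etc.) and compute $\mu$ directly for the model borromean link in that convention, e.g.\ via the Magnus expansion of the longitudes or via a Seifert-surface intersection count on the genus-$1$ surface of Figure~\ref{figSeifertBorro}. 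Once the model case is pinned down, the general case follows because any other triple point looks, in a neighborhood of its three preimages, exactly like the model up to an orientation-preserving local diffeomorphism, and the contributions add. A secondary, more routine point is justifying that $\mu$ really is a cobordism invariant in the strict-cobordism sense used here; this can be cited (Milnor; see \cite{Meilhan}) or reduced to concordance invariance plus the observation that a strict cobordism factors through births, fusions, fissions and deaths, each of which is visibly $\mu$-preserving for algebraically split $3$-component links.
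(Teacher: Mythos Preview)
Your strategy differs from the paper's and has a real gap at the passage to $B^b$. Proposition~\ref{propcob} yields a strict cobordism from $L$ to a split union of $b$ borromean links where $b$ is the \emph{total} number of borromean triple points on $D$, not the unsigned count of those involving all three $D_i$; the elementary modifications you list in your first paragraph do not remove monochromatic or bichromatic borromean triple points, so the identification ``$b=n_{bt}(D)$ counted without signs'' is incorrect. More fundamentally, $B^b$ has $3b$ components, so $\mu(B^b)$ is not defined as the triple linking number of a $3$--component link. Making sense of ``$\mu(L)=\mu(B^b)$'' forces you to introduce a \emph{colored} $\mu_{123}$ (coloring each component of $B^b$ by the index of the $D_i$ it comes from), prove its invariance under strict colored cobordism, and check that non-tricolored borromean summands contribute~$0$. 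Your proposed justification --- ``births, fusions, fissions and deaths, each of which is visibly $\mu$-preserving for algebraically split $3$-component links'' --- does not apply, since a fission already produces a link with more than three components; this step is not the routine concordance statement you cite.

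The paper sidesteps all of this by never leaving genuine $3$--component links. It argues by induction: using the band-sum trick from the proof of Theorem~\ref{thDeltaDistance}, one glues a single borromean link $B$ to $L$ so as to cancel one of the borromean links in $B^t$, yielding a strict cobordism from $L\sharp B$ to $B^{t-1}$ whose projection is a new marked singular disks complex with one fewer triple point. Each such gluing either touches one or two components of $L$, in which case it is a link-homotopy and both $\mu$ and $n_{bt}$ are unchanged, or it touches all three, in which case Krushkal's additivity lemma \cite{Krushkal} shows $\mu$ shifts by the same $\pm1$ as $n_{bt}$. After all triple points are removed $L$ has become slice, so $\mu=0=n_{bt}$, and the original values agree. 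The substantive external ingredient you are missing is Krushkal's lemma; it is precisely what stands in for the colored-cobordism invariance you tried to invoke.
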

\begin{proof}
 Proposition~\ref{propcob} provides a strict cobordism $S\subset S^3\times[0,1]$ from $L\subset S^3\times\{0\}$ to a split union $B^t$ of $t=T_s(L)$ borromean links bounding a $T$--ribbon complex $R^t\subset S^3\times\{1\}$, such that $S\cup R^t$ projects onto $D$. This cobordism can be worked out as in the proof of Theorem~\ref{thDeltaDistance} in order to get a strict cobordism, from a connected sum $L\sharp B$ to a sublink $B^{t-1}$ of $B^t$ made of $t-1$ borromean links, which projects to a marked singular disks complex for $L\sharp B$. Note that the sign of the remaining borromean triple points is unchanged. Note also that the connected components of $L$ involved in the connected sum correspond to the connected components of $D$ involved in the cancelled triple point.
 
 If the borromean link is glued along one or two components of the initial link, then the invariance of $\mu$ under link-homotopy --- a relation which allows isotopy and self-crossing change of each component --- shows that the value of $\mu(L)$ is unchanged; as is the value of $n_{bt}(D)$. If the three components of the borromean link are glued to the three components of the initial link, then it is a result of Krushkal \cite[Lemme 9]{Krushkal} that Milnor's triple linking number is additive under such a gluing. Hence $\pm1$ is added to $\mu(L)$, depending on the orientations; the same value is added to $n_{bt}(D)$.
 
 Repeat this operation until there is no more borromean triple point on $D$. At that stage, $n_{bt}(D)=0$ and $L$ has been turned to a slice link, so that $\mu(L)=0$ since $\mu$ is a concordance invariant. It follows that the initial values of $n_{bt}(D)$ and $\mu(L)$ were equal.
\end{proof}

\begin{corollary}
 Let $L=L_1\sqcup L_2\sqcup L_3$ be an ordered algebraically split 3--component link. Then $|\mu(L)|\leq T_s(L)$.
\end{corollary}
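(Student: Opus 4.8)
The plan is to deduce this immediately from Proposition~\ref{propMilnor}. First I would take a marked normal singular disks complex $D=D_1\cup D_2\cup D_3$ for $L$ with no clasp that realizes the $T$--genus, so that $D$ has exactly $T_s(L)$ borromean triple points; such a $D$ exists because $L$ is algebraically split, hence bounds a $T$--ribbon (in particular marked normal singular) disks complex by Kaplan's theorem. Proposition~\ref{propMilnor} then gives $\mu(L)=n_{bt}(D)$.

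Next I would observe that $n_{bt}(D)$ is, by definition, the \emph{signed} count of those borromean triple points of $D$ that involve all three components $D_1,D_2,D_3$. This is a signed sum over a subset of the full set of borromean triple points of $D$, so $|n_{bt}(D)|$ is bounded above by the total number of borromean triple points of $D$, namely $T_s(L)$. Combining the two displays yields $|\mu(L)|=|n_{bt}(D)|\leq T_s(L)$, which is the claim.

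There is essentially no obstacle here: the content is entirely in Proposition~\ref{propMilnor}, and the remaining step is the trivial inequality ``a signed subcount is at most the total count.'' The only point worth flagging, so the reader is not confused, is that $n_{bt}$ counts with signs and only the triple points meeting all three components, whereas $T_s$ counts \emph{all} borromean triple points unsigned; passing to absolute values on the left accommodates both discrepancies in the correct direction.
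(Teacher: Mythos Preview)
Your proposal is correct and matches the paper's intended argument: the corollary is stated without proof precisely because it is an immediate consequence of Proposition~\ref{propMilnor}, via the observation that $|n_{bt}(D)|$ is bounded by the total number of borromean triple points on a minimizing complex. Your care in noting that $n_{bt}$ is a signed subcount while $T_s$ is an unsigned total count is exactly the point.
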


\begin{corollary} \label{corMilnor}
 Let $L=L_1\sqcup L_2\sqcup L_3$ be an ordered algebraically split 3--component link. Let $D=D_1\cup D_2\cup D_3$ be a $T$--ribbon disks complex for $L$. Then $\mu(L)$ equals the algebraic intersection number $\langle D_1,D_2,D_3\rangle$.
\end{corollary}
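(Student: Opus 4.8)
The plan is to reduce the statement to Proposition~\ref{propMilnor}. A $T$--ribbon disks complex is in particular a marked singular disks complex for $L$ with no clasp, so Proposition~\ref{propMilnor} already gives $\mu(L)=n_{bt}(D)$; the only thing left to prove is that, for a $T$--ribbon disks complex $D$, the signed count $n_{bt}(D)$ coincides with the algebraic triple intersection number $\langle D_1,D_2,D_3\rangle$.

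For this, I would first move the three immersed disks into general position by an arbitrarily small isotopy. Such an isotopy turns no ribbon into a clasp and no borromean triple point into a non-borromean one, so $D$ remains a $T$--ribbon disks complex; moreover $D_1\cap D_2\cap D_3$ becomes a finite set, disjoint from $\partial D=L$ since the components $L_i$ are pairwise disjoint. A point $p\in D_1\cap D_2\cap D_3$ lies on the $D_i$--$D_j$ double line for each pair $\{i,j\}$, hence is a triple point of $D$ with exactly one local sheet on each of $D_1,D_2,D_3$, that is, a triple point involving all three components; conversely every triple point involving the three components lies on all three disks. Since $D$ is $T$--ribbon, all its triple points are borromean, so $D_1\cap D_2\cap D_3$ is precisely the set of borromean triple points of $D$ involving its three components, i.e.\ the points counted (with signs) by $n_{bt}(D)$.

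It remains to check that the sign attached to a point $p\in D_1\cap D_2\cap D_3$ in the definition of $n_{bt}(D)$ agrees with its contribution to $\langle D_1,D_2,D_3\rangle$. Both are $\pm1$ and are read off the same local data, namely the configuration of the co-orientations of $D_1,D_2,D_3$ at $p$ inside the oriented space $T_pS^3$; both change sign under reversing the orientation of one component and under transposing two components. Hence it is enough to match the two conventions at a single explicit borromean triple point, for instance the one appearing in the $T$--ribbon disks complex for the borromean link of Figure~\ref{figDisksBorromean}. Granting this, $\langle D_1,D_2,D_3\rangle=n_{bt}(D)=\mu(L)$. The main point requiring care is this last sign comparison; the general-position reduction is routine but must be made, since the triple intersection number is only defined for transverse disks.
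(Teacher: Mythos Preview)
Your proposal is correct and follows exactly the route the paper intends: the corollary is stated without proof precisely because it is meant to be read as an immediate consequence of Proposition~\ref{propMilnor}, via the identification $n_{bt}(D)=\langle D_1,D_2,D_3\rangle$ that you spell out. Your general-position step is in fact unnecessary---at a triple point of an immersed $T$--ribbon complex the three local sheets are already transverse, so $\langle D_1,D_2,D_3\rangle$ is well-defined from the start---but the rest (identifying $D_1\cap D_2\cap D_3$ with the borromean triple points involving all three components, and matching signs) is exactly the content implicit in the paper.
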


\section{Examples} \label{secex}

Let $B_n$ be the link obtained by 0--cabling $(n-1)$ times a component of the borromean link~$B$, see Figure~\ref{figlinkB4}. 
 
\begin{figure}[htb]
\begin{center}
\begin{tikzpicture}
 \foreach \x in {1,2,3,4} {\draw (\x,0) ellipse (0.2 and 0.5);}
 \draw[rounded corners=6pt] (0,0) -- (0,-1) -- (5,-1) -- (5,1) -- (0,1) -- (0,0);
 \draww{ (-0.1,0.3) arc (90:270:0.3) -- (0.3,-0.3) (5.1,0.3) arc (90:-90:0.3) -- (4.3,-0.3);}
 \foreach \x in {0,1,...,3} {\foreach \y in {-0.3,0.3} {
 \draww{(\x +0.3,\y) -- (\x +1,\y);}}}
 \draw (0.1,0.3) -- (0.3,0.3) (4.3,0.3) -- (4.9,0.3);
\end{tikzpicture}
\caption{The link $B_4$} \label{figlinkB4}
\end{center}
\end{figure}
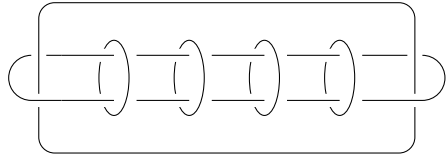

\begin{proposition}
 For all $n>0$, $T_r(B_n)=T_s(B_n)=n$ and $g_r(B_n)=g_s(B_n)=1$.
\end{proposition}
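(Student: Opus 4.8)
The plan is to bound $T_s(B_n)$ and $g_s(B_n)$ from below by $n$ and $1$, to exhibit ribbon complexes realizing these bounds, and to conclude from the inequalities $g_s\leq g_r\leq T_r$ and $g_s\leq T_s\leq T_r$. Write the borromean link as $B=K\sqcup K'\sqcup L$ and let $L^{(1)},\dots,L^{(n)}$ be the $n$ parallel $0$--framed copies of $L$, so that $B_n=K\sqcup K'\sqcup L^{(1)}\sqcup\dots\sqcup L^{(n)}$. The key observation is that each $3$--component sublink $K\sqcup K'\sqcup L^{(j)}$ is isotopic to $B$: a $0$--framed longitude of $L$ can be pushed onto $L$ inside a tubular neighborhood of $L$ disjoint from $K\cup K'$. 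In particular $\mu(K\sqcup K'\sqcup L^{(j)})=\mu(B)=\pm1$ for every $j$, and, the borromean link not being slice, $B_n$ is not slice either (a sublink of a slice link is slice, by restricting the slicing disks), so $g_s(B_n)\geq 1$.

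For the bound $T_s(B_n)\geq n$, let $D=D_K\cup D_{K'}\cup D^{(1)}\cup\dots\cup D^{(n)}$ be a marked normal singular disks complex for $B_n$ with no clasp and with $b$ borromean triple points; I claim $b\geq n$. For each $j$, the sub-disks-complex $D_K\cup D_{K'}\cup D^{(j)}$ is a marked normal singular disks complex for $K\sqcup K'\sqcup L^{(j)}$ with no clasp, so Proposition~\ref{propMilnor} gives $n_{bt}\big(D_K\cup D_{K'}\cup D^{(j)}\big)=\mu(K\sqcup K'\sqcup L^{(j)})=\pm1$. By definition this is the signed count of the borromean triple points of $D$ having one preimage on each of $D_K$, $D_{K'}$ and $D^{(j)}$; being nonzero, at least one such point exists. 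For distinct indices $j$ these points are distinct, since a triple point determines the unordered triple of disks carrying its preimages and the triples $\{D_K,D_{K'},D^{(j)}\}$ are pairwise distinct. Hence $b\geq n$ and $T_s(B_n)\geq n$.

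The upper bounds come from parallel-copying the surface spanning $L$ in the two reference complexes for $B$. Since the borromean link admits a symmetry realizing any permutation of its components, I may assume that in the genus--$1$ ribbon complex $\Sigma_K\cup\Sigma_{K'}\cup\Sigma_L$ of Figure~\ref{figSeifertBorro} the surfaces $\Sigma_{K'}$ and $\Sigma_L$ are embedded disks while $\Sigma_K$ has genus $1$. Taking $n$ pairwise disjoint parallel push-offs $\Sigma_L^{(1)},\dots,\Sigma_L^{(n)}$ of the embedded disk $\Sigma_L$, their boundaries form the $0$--cable of $L$ (the push-off of an embedded surface induces the $0$--framing), and the self-intersections of $\Sigma_K\cup\Sigma_{K'}\cup\Sigma_L^{(1)}\cup\dots\cup\Sigma_L^{(n)}$ are those of $\Sigma_K\cup\Sigma_{K'}$ together with parallel copies of the ribbons $\Sigma_L\cap\Sigma_K$ and $\Sigma_L\cap\Sigma_{K'}$; since the push-offs are disjoint, these are all ribbons and no triple point or clasp is created, so this is a ribbon complex for $B_n$ of genus $g(\Sigma_K)+g(\Sigma_{K'})=1$, whence $g_r(B_n)\leq 1$. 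The same device applied to the $T$--ribbon disks complex $D_1\cup D_2\cup D_3$ of Figure~\ref{figDisksBorromean}, whose unique borromean triple point involves all three disks, produces after parallel-copying the disk $D_3$ bounding $L$ a $T$--ribbon disks complex for $B_n$ in which each of the $n$ copies of $D_3$ meets $D_1$ and $D_2$ in exactly one borromean triple point and no other triple point occurs; this gives $T_r(B_n)\leq n$.

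Putting everything together, $1\leq g_s(B_n)\leq g_r(B_n)\leq 1$ forces $g_s(B_n)=g_r(B_n)=1$, and $n\leq T_s(B_n)\leq T_r(B_n)\leq n$ forces $T_s(B_n)=T_r(B_n)=n$. The heart of the argument is the inequality $T_s(B_n)\geq n$: it relies on applying Proposition~\ref{propMilnor} to the $n$ distinct $3$--component sublinks isotopic to $B$ and noting that the borromean triple points it detects must be pairwise distinct because they are carried by distinct triples of disks. The upper-bound constructions are then routine, the only point requiring care being the use of the symmetry of $B$ to guarantee that the cabled component bounds an embedded disk in each reference complex, so that its parallel copies are genuinely disjoint.
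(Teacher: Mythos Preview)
Your proof is correct and follows essentially the same strategy as the paper: bound $g_s$ below by non-sliceness of the sublink $B$, bound $T_s$ below by finding one forced triple point per cable, and build explicit ribbon/$T$--ribbon complexes by parallel-copying the disk on the cabled component.

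Two minor differences are worth noting. For the lower bound $T_s(B_n)\geq n$, the paper simply asserts that each intersection $D\cap D'\cap D_i$ contains a borromean triple point ``since $B$ is not slice''; you instead invoke Proposition~\ref{propMilnor} to get $n_{bt}(D_K\cup D_{K'}\cup D^{(j)})=\mu(B)=\pm1$, which is a cleaner justification and makes explicit why the triple point must involve all three sheets (hence why the points for different $j$ are distinct). For the upper bound $T_r(B_n)\leq n$, the paper reads it off directly from the obvious spanning disks in Figure~\ref{figlinkB4}, whereas you obtain it by parallel-copying a disk in the $T$--ribbon complex of Figure~\ref{figDisksBorromean}; both work, but the paper's construction avoids having to argue that no spurious triple points arise among the push-offs.
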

\begin{proof}
 First note that the borromean link $B$ satisfies $T_r(B)=T_s(B)=1$: Figure~\ref{figDisksBorromean} gives a $T$--ribbon disks complex for $B$ with one borromean triple point, and $B$ is not slice since $|\mu(B)|=1$.
 In Figure~\ref{figSeifertBorro}, taking $n$ parallel copies of the green disk provides a genus--1 ribbon complex for~$B_n$, so that $g_r(B_n)\leq1$. Moreover, $B_n$ is not slice since it has a non-slice sublink, namely $B$, thus $g_s(B_n)\geq1$.
 In Figure~\ref{figlinkB4}, the obvious disks bounded by the components of $B_n$ define a $T$--ribbon disks complex with exactly $n$ borromean triple points, giving $T_r(B_n)\leq n$. Now, take a marked normal singular disks complex $\mathcal D=D\cup D'\cup\left(\cup_{1\leq i\leq n}D_i\right)$ for $B_n$, where the disks $D_1,\dots,D_n$ are bounded by the parallel components of $B_n$. For any $1\leq i\leq n$, the number of borromean triple points on $\mathcal D$ defined by the intersection $D\cap D'\cap D_i$ is at least 1 since $B$ is not slice. Hence $T_s(B_n)\geq n$.
\end{proof}

Let $K_n$ be the {\em twist knot} defined by $n$ half-twists, see the left-hand side of Figure~\ref{figtwistknots}. It is easy to see that the genus of $K_n$ is~$1$. On the other hand, one can check that $K_0$ and $K_4$ are slice. A famous result of Casson and Gordon \cite{CG} states that these are the only slice twist knots.
\begin{figure}[htb]
\begin{center}
\begin{tikzpicture} [scale=0.4]
 \newcommand{\crossing}[2]{
 \begin{scope} [xshift=#1cm,yshift=#2cm]
 \draw (0,0) .. controls +(0,1) and +(0,-1) .. (1,2);
 \draww{(1,0) .. controls +(0,1) and +(0,-1) .. (0,2);}
 \end{scope}}
\begin{scope}
 \foreach \y in {0,2,...,6} {\crossing0\y}
 \draw (1,0) .. controls +(0,-2) and +(0,-7) .. (-2,4) .. controls +(0,4) and +(-1,0) .. (0,8.2) -- (0.8,8.2);
 \draww{(0,0) .. controls +(0,-2) and +(0,-7) .. (3,4) .. controls +(0,4) and +(1,0) .. (1.2,8.2);}
 \draw (1,8) -- (1,8.4) .. controls +(0,1) and +(0,1) .. (0,8.4);
 \draw (-1,3) node[rotate=90] {{\tiny $n$ half-twists}};
 \draw[gray] (-0.3,3) node {$\left\lbrace\resizebox{0cm}{1.8cm}{\phantom{rien}}\right.$};
\end{scope}
 \node (A) at (4,3.5) {}; \node (B) at (9,3.5) {};
 \draw[->] (A) -- (B) node[below,pos=0.5] {{\small $\Delta$--move}};
\begin{scope} [xshift=12.5cm]
 \foreach \y in {0,2,...,6} {\crossing0\y}
 \draw (1,0) .. controls +(0,-2) and +(0,-7) .. (-2,4) .. controls +(0,2) and +(-1,0) .. (-0.2,6);
 \draww{(0,0) .. controls +(0,-2) and +(0,-7) .. (3,4) .. controls +(0,2) and +(1,0) .. (1.2,6) -- (0.2,6);}
 \draw (1,8) .. controls +(0,1) and +(0,1) .. (0,8);
\end{scope}
 \draw (17,3.5) node {$=$};
\begin{scope} [xshift=20cm,yshift=1.5cm]
 \foreach \y in {0,2} {\crossing0\y}
 \draw (1,0) .. controls +(0,-1) and +(0,-4) .. (-1.5,2) .. controls +(0,2) and +(-1,0) .. (0,4.2) -- (0.8,4.2);
 \draww{(0,0) .. controls +(0,-1) and +(0,-4) .. (2.5,2) .. controls +(0,2) and +(1,0) .. (1.2,4.2);}
 \draw (1,4) -- (1,4.4) .. controls +(0,1) and +(0,1) .. (0,4.4);
\end{scope}
\end{tikzpicture}
\caption{A twist knot and a $\Delta$--move on it} \label{figtwistknots}
\end{center}
\end{figure}

\begin{theorem}[Casson--Gordon]
 For $n\neq0,4$, $g_s(K_n)=1$.
\end{theorem}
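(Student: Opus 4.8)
The plan is to prove the two inequalities $g_s(K_n)\le 1$ and $g_s(K_n)\ge 1$ separately. The upper bound involves nothing deep: the standard diagram of $K_n$ bounds a Seifert surface of genus $1$ (a once-punctured torus), and pushing its interior into $B^4$ gives a smoothly and properly embedded genus-$1$ surface, so $g_s(K_n)\le 1$; equivalently, $g_s(K_n)$ is at most the ordinary genus of $K_n$, which is $1$ as recalled above. Since $K_0$ is the unknot, the theorem reduces to showing that $K_n$ is not slice, i.e.\ $g_s(K_n)\ge 1$, for every $n>0$ with $n\ne 4$.

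From the genus-$1$ Seifert surface one reads off the Seifert form of $K_n$ as an explicit $2\times 2$ integral matrix, hence its signature $\sigma(K_n)$ and its determinant $\det(K_n)=|\Delta_{K_n}(-1)|=2n+1$. A short computation shows that for $n$ odd the symmetrized Seifert form is definite of rank two, so $\sigma(K_n)=\pm 2\ne 0$ and $K_n$ is not slice by the signature obstruction. For $n$ even one has $\sigma(K_n)=0$, but $\det(K_n)=2n+1$ is a perfect square exactly when $n=2j(j+1)$ for some $j\ge 1$, i.e.\ for $n$ in the sparse set $S=\{4,12,24,40,\dots\}$; when $n$ is even and $n\notin S$, the determinant of $K_n$ is not a square, the Fox--Milnor condition fails, and $K_n$ is not slice. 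This disposes of every $n>0$ lying outside $S$.

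It remains to treat the twist knots $K_n$ with $n\in S\setminus\{4\}$. These are exactly the algebraically slice twist knots other than the unknot and $6_1=K_4$: their Seifert form is metabolic, so no signature, Fox--Milnor, or other invariant extracted from the infinite cyclic cover can obstruct sliceness. This is precisely where Casson and Gordon's theorem \cite{CG} enters: they compute the relevant Casson--Gordon invariants of these knots --- Tristram--Levine-type signatures attached to characters of the first homology of a suitable prime-power cyclic branched cover --- and show that for no metabolizer do these invariants all vanish, which is incompatible with sliceness. Granting this computation, which is the heart of the matter and which I would quote rather than reprove, one concludes $g_s(K_n)\ge 1$, and hence $g_s(K_n)=1$, for all $n\ne 0,4$.
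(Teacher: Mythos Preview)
The paper does not prove this theorem: it is stated as a cited result of Casson and Gordon \cite{CG}, with no accompanying proof. Your outline is an accurate summary of how the actual argument goes: the upper bound is immediate from the genus--$1$ Seifert surface (which the paper mentions just before the statement), the signature handles odd $n$, the Fox--Milnor condition on the Alexander polynomial handles even $n$ with $2n+1$ not a perfect square, and the remaining algebraically slice cases $n\in\{12,24,40,\dots\}$ require the Casson--Gordon signature invariants. You are right to flag that last step as something to quote rather than reprove --- that computation is the substance of \cite{CG} in this context and lies well beyond the scope of the present paper, which only invokes the statement as input for its discussion of the $4$--dimensional clasp number and $T$--genus of twist knots.
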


\begin{lemma}
 For $n\neq0,4$, $c_4^b(K_n)=c_4(K_n)=1$.
\end{lemma}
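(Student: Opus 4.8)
The plan is to prove the two inequalities $c_4^b(K_n)\le 1$ and $c_4(K_n)\ge 1$, which together with the general chain $c_4^+,c_4^-\le c_4^b\le c_4\le 2c_4^b$ pin both numbers down to $1$. The lower bound is immediate: since $K_n$ is not slice for $n\neq 0,4$ (by the Casson--Gordon theorem just quoted), it does not bound an embedded disk in $B^4$, so any immersed disk it bounds has at least one transverse double point, giving $c_4(K_n)\ge 1$ and hence $c_4^b(K_n)\ge 1$ as well (a fortiori, since $c_4^b\le c_4$... wait, actually I need $c_4^b\ge 1$ directly: if $c_4^b(K_n)=0$ then $K_n$ bounds an embedded disk, contradiction). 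So the substantive point is the upper bound: $K_n$ bounds an immersed disk in $B^4$ with a single transverse double point, and moreover one can arrange that disk to have trivial self-intersection number, so that this single double point already witnesses $c_4^b(K_n)\le 1$ and $c_4(K_n)\le 1$.

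To produce such a disk, I would use the $\Delta$--move shown on the right-hand side of Figure~\ref{figtwistknots}: a single $\Delta$--move turns $K_n$ into $K_{n-2}$ (the picture displays the case reducing to $K_2$, i.e.\ the figure-eight-type knot, but the same local move iterates). Iterating, after finitely many $\Delta$--moves we reach either $K_0$ or $K_4$ according to the parity of $n$; both are slice. But I only need \emph{one} $\Delta$--move if I instead argue as follows. By Theorem~\ref{thDeltaDistance} (or rather its proof), a $\Delta$--move on a knot can be realized by gluing a borromean link, and gluing a borromean link along a single component is a link-homotopy-type operation that, by Proposition~\ref{propborrotoclasp} applied to the resulting marked normal singular disk with one borromean triple point and no clasp, corresponds to a properly immersed disk in $B^4$ with exactly one positive and one negative double point — hence self-intersection number zero. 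The cleanest route: $\sd(K_n)\le ?$. Actually the simplest honest statement is that $s_\Delta(K_n)$ equals the number of $\Delta$--moves to reach a slice knot, and from Figure~\ref{figtwistknots} one $\Delta$--move changes $K_n$ to $K_{n-2}$; but that does not immediately give $s_\Delta(K_n)=1$.

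So the argument I would actually write is the direct $4$--dimensional one. View $B^4 = S^3\times[0,1]/(S^3\times\{0\})$. Realize the $\Delta$--move of Figure~\ref{figtwistknots} inside a small ball; this gives a cobordism in $S^3\times[\tfrac12,1]$ from $K_n$ (at level $1$) to a knot $K'$ (at level $\tfrac12$) that is either $K_0$ or $K_4$ — here I exploit that the $\Delta$--move in the figure reduces the twist count by $2$ and I iterate, but crucially a $\Delta$--move realized by gluing a borromean link contributes, after pushing into $B^4$ via Proposition~\ref{propborrotoclasp}, exactly one pair of cancelling double points. Hmm — that gives $\lceil n/2\rceil$-ish double points, too many. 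The genuinely economical observation is different: a single twist-region $\Delta$--move does \emph{not} change $K_n$ to $K_{n-2}$ in the way I want; rather, I should note that $K_n$ and $K_{n'}$ for $n\equiv n'\pmod 2$ are related by \emph{crossing changes} in the twist region, each crossing change being realizable by one transverse double point of a prescribed sign in $B^4$, and signs can be chosen to cancel in pairs. Since $K_0$ and $K_4$ are slice, $K_n$ with $n$ even differs from a slice knot by crossing changes; with $n$ odd one extra analysis (e.g.\ $K_n$ for $n$ odd is related by one crossing change to $K_{n\pm1}$ even) handles it. The main obstacle, and the step I would be most careful about, is getting the \emph{count} down to exactly one double point of trivial self-intersection: one must check that a single crossing change in the twist region already produces a slice knot, i.e.\ that each $K_n$ with $n\neq 0,4$ is related to a slice twist knot ($K_0$ or $K_4$) by a single crossing change together with an unknotting-type simplification — equivalently, that the unknotting number argument collapses to one double point after allowing the sign freedom in the balanced count. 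I would verify this by exhibiting, for each residue of $n$, an explicit immersed disk: take the obvious genus-one Seifert surface picture, push a band into $B^4$, and check that one self-intersection suffices, invoking that $c_4^b(K)\le u(K)$-type bounds are too weak and instead using the Figure~\ref{figtwistknots} move directly as the model for the single double point.
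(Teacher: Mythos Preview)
Your proposal never actually establishes the upper bound; you cycle through $\Delta$--moves, iterated crossing changes, and ad hoc immersed-disk constructions without completing any of them. Worse, you explicitly dismiss as ``too weak'' the one approach that works: the unknotting number bound. Every twist knot $K_n$ has unknotting number~$1$ --- changing the single clasp crossing (the one at the top of the diagram, not a crossing in the twist region) yields the unknot --- and it is standard that a crossing change is realized by a single transverse double point of an immersed disk in $B^4$, so $c_4(K_n)\le u(K_n)=1$. Your concern about balancing signs for $c_4^b$ is also misplaced: the chain $c_4^b\le c_4$ already gives $c_4^b(K_n)\le 1$, since one may always add an extra double point of the opposite sign to make the self-intersection number vanish. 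Combined with the non-sliceness lower bound (which you handle correctly), this finishes the proof in two lines, exactly as the paper does.

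The $\Delta$--move of Figure~\ref{figtwistknots} is irrelevant here; it is used later in the paper to study $T_r$ and $T_s$, not $c_4$ or $c_4^b$.
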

\begin{proof}
 It is a simple and well-known fact that the unknotting number majors the 4--dimensional clasp number. Clearly, the unknotting number of non-trivial twist knots equals~$1$.
\end{proof}

\begin{proposition}
 For all $n\geq0$, $|T_r(K_{n+2})-T_r(K_n)|=|T_s(K_{n+2})-T_s(K_n)|=1$.
\end{proposition}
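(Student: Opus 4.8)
The plan is to sandwich each difference between $0$ and $1$: an upper bound of $1$ coming from a single $\Delta$--move, and a lower bound of $1$ coming from the parity of the Arf invariant. Since all of $T_s,T_r,\sd,\rd$ are finite on knots (knots are algebraically split), everything in sight is a well-defined nonnegative integer.

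First I would record the upper bound. The right-hand side of Figure~\ref{figtwistknots} shows that a single $\Delta$--move turns $K_{n+2}$ into $K_n$ (it absorbs two of the half-twists). Since a $\Delta$--move is reversible, $\sd$ and $\rd$ each change by at most $1$ between $\Delta$--equivalent links, so $|\sd(K_{n+2})-\sd(K_n)|\leq 1$ and $|\rd(K_{n+2})-\rd(K_n)|\leq 1$. By Theorem~\ref{thDeltaDistance}, $T_s=\sd$ and $T_r=\rd$, so the same bounds hold for $|T_s(K_{n+2})-T_s(K_n)|$ and $|T_r(K_{n+2})-T_r(K_n)|$.

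Next I would produce the parity obstruction. For any knot $K$, a marked normal singular disks complex realizing $T_s(K)$, and a $T$--ribbon disks complex realizing $T_r(K)$, are both marked normal singular disks complexes with no clasp, so Proposition~\ref{propArf} gives $T_s(K)\equiv T_r(K)\equiv\Arf(K)\pmod 2$. It then remains to check that a single $\Delta$--move changes the Arf invariant of a knot. I would argue exactly as in the proof of Theorem~\ref{thDeltaDistance}: realizing the $\Delta$--move by gluing a borromean link (Figure~\ref{figGlueBorro}) amounts to gluing to a disks complex for $K$ the $T$--ribbon disks complex of the borromean link, which carries a single borromean triple point, the attaching bands producing only new ribbons; hence if $K$ bounds a marked normal singular disks complex with no clasp and $b$ borromean triple points, the result of one $\Delta$--move bounds one with $b+1$, and Proposition~\ref{propArf} gives that its Arf invariant is $\Arf(K)+1$. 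Applied to the pair $(K_{n+2},K_n)$ this yields $\Arf(K_{n+2})\neq\Arf(K_n)$, hence $T_s(K_{n+2})\not\equiv T_s(K_n)$ and $T_r(K_{n+2})\not\equiv T_r(K_n)\pmod 2$, so neither difference can be $0$.

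Combining the two steps, each of $|T_s(K_{n+2})-T_s(K_n)|$ and $|T_r(K_{n+2})-T_r(K_n)|$ is a positive integer at most $1$, hence equals $1$. The only point that needs a moment's care is the claim that a $\Delta$--move changes the Arf invariant of a knot; but since it follows immediately from the borromean-link description of the move together with Proposition~\ref{propArf}, I do not expect a genuine obstacle here — the statement is essentially a corollary of Theorem~\ref{thDeltaDistance} and Proposition~\ref{propArf}.
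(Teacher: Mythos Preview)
Your proof is correct and follows essentially the same approach as the paper: the upper bound comes from the single $\Delta$--move in Figure~\ref{figtwistknots} together with Theorem~\ref{thDeltaDistance}, and the lower bound from Proposition~\ref{propArf}. The paper's version is terser --- it simply says ``Proposition~\ref{propArf} shows that they have to be modified'' --- whereas you spell out explicitly why a $\Delta$--move flips the Arf invariant, but the underlying argument is the same.
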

\begin{proof}
 For $n\geq0$, Figure~\ref{figtwistknots} shows that $K_{n+2}$ can be turned into $K_n$ by a single $\Delta$--move, which modifies $T_s$ and $T_r$ by at most one according to Theorem~\ref{thDeltaDistance}. Proposition~\ref{propArf} shows that they have to be modified (precisely it shows that their parity is changed).
\end{proof}

\begin{corollary}
 For all $n>1$, $T_r(K_{2n-1})\leq n$ and $T_r(K_{2n})\leq n-2$.
\end{corollary}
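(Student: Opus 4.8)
The plan is to read off both inequalities from the preceding proposition, which gives $|T_r(K_{m+2})-T_r(K_m)|=1$ for every $m\geq 0$, combined with the equality $T_r=\rd$ of Theorem~\ref{thDeltaDistance} and the values of $T_r$ on the two smallest twist knots $K_1$ and $K_4$.

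First I would record those two values. The knot $K_4$ is the Stevedore knot $6_1$, which is ribbon; since a $T$--ribbon disk carrying no borromean triple point has no triple point at all and hence is a ribbon disk, this means $T_r(K_4)=0$. The knot $K_1$ is the trefoil, and it is classical \cite{MuNa} that the trefoil is obtained from the unknot by a single $\Delta$--move, so $\rd(K_1)\leq 1$ and therefore $T_r(K_1)\leq 1$ by Theorem~\ref{thDeltaDistance}. (One in fact has $T_r(K_1)=1$, because Proposition~\ref{propArf} forces $T_r(K_1)\equiv\Arf(3_1)=1\pmod 2$, but only the upper bound is needed.)

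Then, for $n>1$, I would iterate the preceding proposition along the two residue classes. Applying the triangle inequality for $|\,\cdot\,|$ a total of $n-2$ times along the chain $K_{2n},K_{2n-2},\dots,K_4$ yields $T_r(K_{2n})\leq T_r(K_4)+(n-2)=n-2$, and applying it $n-1$ times along the chain $K_{2n-1},K_{2n-3},\dots,K_1$ yields $T_r(K_{2n-1})\leq T_r(K_1)+(n-1)\leq n$. These are exactly the asserted bounds, and the hypothesis $n>1$ is precisely what makes the counts $n-2$ and $n-1$ nonnegative.

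There is no real obstacle here: the arithmetic content was already packaged in the preceding proposition and in Theorem~\ref{thDeltaDistance}. The only genuine input is the value $T_r(K_4)=0$, i.e.\ the ribbon-ness of $6_1$ --- this is what lowers the even bound from the naive $n$ (obtained by descending all the way down to $K_0$) to $n-2$ --- together with the classical fact that the trefoil lies at $\Delta$--distance at most $1$ from the unknot. Both are standard, so the only care required is to stop the chains of $\Delta$--moves at $K_4$ and $K_1$ rather than at $K_0$.
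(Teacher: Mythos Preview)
Your proof is correct and follows essentially the same route as the paper: establish the base values $T_r(K_1)\leq 1$ and $T_r(K_4)=0$, then iterate the step--by--one proposition along each parity class. You are in fact slightly more careful than the paper, which only records that $K_4$ is \emph{slice}; for the $T_r$ bound one really needs that $K_4=6_1$ is \emph{ribbon}, as you note.
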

\begin{proof}
 The move on Figure~\ref{figtwistknots} changes $K_1$ into the trivial knot, so that $T_r(K_1)=1$. The knot $K_4$ is slice. 
\end{proof}

The following result shows the failure of Lemma~\ref{lemmagscb} for algebraically split links.
\begin{lemma}
 Let $L$ be the split union of a non-slice twist knot with its mirror image. Then $g_s(L)=2$ and $c_4^b(L)=1$. 
\end{lemma}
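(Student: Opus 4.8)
The plan is to establish the two equalities separately, using the invariants developed earlier. For the lower bound $c_4^b(L) \geq 1$: since $L$ contains a non-slice knot as a component (indeed as a split sublink), $L$ is not slice, so $g_s(L) \geq 1$ and hence (since $c_4^b$ majors the slice genus for links with trivial self-intersection number — any immersed disks complex with trivial self-intersection and no positive double points would be, after smoothing, a genuine slice complex) we get $c_4^b(L) \geq 1$. For the upper bound $c_4^b(L) \leq 1$: let $K = K_n$ be a non-slice twist knot, so $K$ has unknotting number $1$. I would realize the split union $L = K \sqcup \overline{K}$ by taking a properly immersed disk in $B^4$ for $K$ with a single positive double point (obtained by capping off the trace of the crossing change with a disk in $B^4$ and recording the crossing change as a transverse double point) and a properly immersed disk for $\overline{K}$ with a single negative double point (the mirror construction reverses the sign). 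Their split union is an immersed disks complex in $B^4$ bounded by $L$ with one positive and one negative double point, hence with trivial self-intersection number, so $c_4^b(L) \leq 1$.

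For $g_s(L) = 2$: the upper bound $g_s(L) \leq 2$ is immediate since each twist knot has slice genus $1$ (Casson--Gordon), so the split union of the two Seifert-type genus-$1$ slice surfaces gives a slice complex of genus $2$. The genuine content is the lower bound $g_s(L) \geq 2$. Here one cannot use a single-knot argument since $K \sqcup \overline{K}$ is \emph{slice as a two-component link in the unordered sense via a concordance between $K$ and $\overline{K}$}\,---\,wait, that would give a genus-$0$ \emph{cobordism} but not a slice \emph{complex} (which requires the two disks to have disjoint-component boundaries, i.e. two separate disks each bounded by one component). So I would argue: suppose $\Sigma = \Sigma_1 \sqcup \Sigma_2$ is a slice complex with $\partial \Sigma_1 = K$, $\partial \Sigma_2 = \overline{K}$ and total genus $\leq 1$. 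Then one of the two surfaces, say $\Sigma_1$, is a disk, exhibiting $K$ as slice\,---\,contradiction. Hence $g_s(L) \geq 2$. This is the clean way to see it, and it is the analogue of the sublink argument used for $B_n$ in the previous proposition, now applied componentwise: a genus-$g$ slice complex for a split union restricts to slice complexes for the summands whose genera add up, so $g_s(K \sqcup \overline{K}) \geq g_s(K) + g_s(\overline{K}) = 1 + 1 = 2$, using additivity of slice genus under split union.

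I expect the main (though modest) obstacle to be the precise bookkeeping in the $c_4^b$ upper bound: one must verify that a crossing-change move on a knot diagram genuinely produces a properly immersed disk in $B^4$ with exactly one transverse double point whose sign is controlled, and that taking the mirror flips the sign; this is standard (it is the same mechanism that shows the unknotting number bounds $c_4$, invoked in the preceding lemma) but needs to be stated carefully enough that the two double points of $L$'s disks complex are genuinely of opposite sign so that the self-intersection number vanishes. The slice-genus computation is then essentially formal via additivity under split union, which itself follows because a slice complex for $L_1 \sqcup L_2$ splits\,---\,after an isotopy in $B^4$ pushing the two pieces into disjoint balls is \emph{not} automatic, so instead one argues directly that each component of the complex is bounded by a sublink and the genera add, giving $g_s \geq g_s(K) + g_s(\overline K)$.
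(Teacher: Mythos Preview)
Your argument is essentially the same as the paper's and is correct in substance, but there is one genuine slip in the justification of $c_4^b(L)\geq 1$. You write that ``$c_4^b$ majors the slice genus for links'', and deduce $c_4^b(L)\geq 1$ from $g_s(L)\geq 1$. But the inequality $g_s\leq c_4^b$ for links is precisely what this lemma is designed to \emph{refute}: you are about to show $g_s(L)=2>1=c_4^b(L)$. What you actually need, and what your parenthetical correctly says, is only the trivial implication $c_4^b(L)=0\Rightarrow L$ is slice (an immersed disks complex with trivial self-intersection and zero positive double points has no double points at all). That is exactly how the paper argues: ``Since $L$ is not slice, $c_4^b(L)>0$.'' So drop the general claim and keep the direct implication.

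The rest matches the paper: the upper bound $c_4^b(L)\leq 1$ via unknotting number~$1$ for each twist knot, with the mirror construction flipping the sign of the double point; and $g_s(L)=2$ via $g_s(K)=g_s(\overline K)=1$ together with the observation that a slice complex for a split link assigns a separate surface to each component, so a total genus $\leq 1$ would force one component to bound a disk.
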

\begin{proof}
 The slice genus follows from that of non-slice twist knots. Since $L$ is not slice, $c_4^b(L)>0$. Now, each component of $L$ bounds a disk in $B^4$ with exactly one double point and the two disks can be chosen to be the mirror image of each other, in order to get two double points with opposite sign.
\end{proof}

This lemma implies that the difference $g_s-c_4^b$, and thus $T_s-c_4^b$, can be arbitrarily large for split links: take the split union of arbitrarily many copies of the link $L$ in the lemma.

\section{Colored links}

A {\em colored link} is a link $L$ in $S^3$ with a given partition into sublinks $L=L_1\sqcup\dots\sqcup L_\mu$. A {\em colored complex for $L$} is a union of compact surfaces $\Sigma=\cup_{1\leq i\leq \mu}\Sigma_i$ such that $\partial\Sigma_i=L_i$ for all $i$. We have as previously notions of normal singular, ribbon, $T$--ribbon, slice complex and a notion of marking. 

Given two links $K=\sqcup_{i=1}^kK_i$ and $J=\sqcup_{j=1}^\ell J_j$, where the $K_i$ and $J_j$ are knots, the {\em linking number of $K$ and $J$} is $\lk(K,J)=\sum_{i=1}^k\sum_{j=1}^\ell\lk(K_i,J_j)$. 
A colored link $L=\sqcup_{i=1}^\mu L_i$ is {\em algebraically $c$--split} if $\lk(L_i,L_j)=0$ for all $i\neq j$. We will generalize Kaplan's result, proving that any algebraically $c$--split colored link bounds a $T$--ribbon genus--$0$ colored complex. Although Kaplan's proof generalizes to this setting, we present here an alternative proof based on Theorem~\ref{thMuNa}. 
We start with preliminary results.

\begin{lemma} \label{lemmaLinkingsA}
 Fix a positive integer $n$ and integers $\ell_{ij}$ for $1\leq i<j\leq n$. There exists a link $K$ with $n$ connected components $K_i$ such that $\lk(K_i,K_j)=\ell_{ij}$ for all $i<j$ and $K$ bounds a genus--$0$ compact connected surface embedded in $S^3$.
\end{lemma}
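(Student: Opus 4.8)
The plan is to build the link $K$ explicitly from its linking matrix data. First I would handle a single off-diagonal entry $\ell_{ij}$: two disjoint round circles $K_i$ and $K_j$ with linking number $\ell_{ij}$ bound a genus--$0$ surface in $S^3$ — indeed, take a Hopf-band-like annulus between them (or a twice-punctured sphere realizing the linking), which is planar. More concretely, place the circles so that $K_i$ bounds a disk met transversally by $K_j$ in $\ell_{ij}$ algebraically-counted points, then tube these intersections in pairs of opposite sign to get a disk for $K_i$ disjoint from $K_j$, and a similar planar surface for $K_j$; the union is a genus--$0$ embedded complex with the prescribed linking number. For the general case I would take $n$ unknotted components arranged along a line (as in Figure~\ref{figlinkB4}'s stacked pattern, or as nested/clasped circles) so that the pair $(K_i,K_j)$ is a $(2,2\ell_{ij})$-torus-link-like configuration for each $i<j$ simultaneously, all other geometric crossings being avoidable by spreading the components out in disjoint balls pairwise. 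Then I would write down an explicit Seifert-type surface for the whole configuration and check it is planar.

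The cleanest route is probably to apply Seifert's algorithm to a well-chosen diagram: arrange the $K_i$ as $n$ horizontal round circles stacked vertically, and insert, between the $i$-th and $j$-th circle, a ``clasp box'' contributing $\ell_{ij}$ to $\lk(K_i,K_j)$ (a sequence of $\ell_{ij}$ like-signed crossings between a strand of $K_i$ and a strand of $K_j$, pushed off to the side). Since every component is an unknot traversed once and all crossings are inter-component, the Seifert circles can be taken to be exactly the $n$ components themselves (each oriented so that its Seifert circle is nested disks), and the Seifert surface obtained consists of $n$ disks joined by bands at the crossing boxes. Counting, this surface has $n$ disks, $\sum_{i<j}|\ell_{ij}|$ bands, and the number of boundary components is $n$; by the Euler characteristic formula $\chi = n - \sum_{i<j}|\ell_{ij}|$ and $\chi = n - 2g - \sum(\text{boundary comps})$... this does \emph{not} automatically give genus $0$, so I would instead not use Seifert's algorithm globally but rather build the surface by hand as above: for each $i$, the component $K_i$ bounds an embedded disk $\Delta_i$ in $S^3$; the obstruction to the $\Delta_i$ being disjoint is exactly the linking numbers, which I absorb by tubing intersection points of opposite sign in pairs, which preserves genus $0$ of each $\Delta_i$ (connect-summing a planar surface with an annulus along a disk stays planar). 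I then isotope to remove any remaining clasp/ribbon issues, noting we only need embeddedness, not disjointness of the surfaces from each other — wait, we do need the single surface $\Sigma=\cup\Sigma_i$ embedded, so I must ensure the $\Sigma_i$ are pairwise disjoint, which is exactly what the tubing achieves when each $\Sigma_i$ is disjoint from every $K_j$.

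So the sharp version of the argument is: choose a diagram where $K_i$ is an unknot bounding an obvious disk $D_i$ meeting $K_j$ ($j\neq i$) transversally in exactly $|\ell_{ij}|$ points, all of the same sign; order the components so that $D_i$ meets only $K_j$ for $j>i$ (stacking trick); then for each pair $(i,j)$ with $i<j$, resolve the $|\ell_{ij}|$ intersection points of $K_j$ with $D_i$ by tubing $K_j$ over the interior of $D_i$ — but since all signs agree this instead requires isotoping $K_j$ off $D_i$ through $S^3\setminus D_i$, which changes $K_j$'s isotopy class. The honest fix: allow the $|\ell_{ij}|$ crossings to come in signs summing to $\ell_{ij}$ is impossible if we want the \emph{geometric} count to equal $|\ell_{ij}|$, so instead realize each $\ell_{ij}$ by a \emph{single} clasp of multiplicity $\ell_{ij}$ whose corresponding surface is a planar annulus. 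Concretely: take $K_i$ and $K_j$ forming a $(2,2\ell_{ij})$-torus link, which bounds a planar annulus $A_{ij}$; do this in a small ball $B_{ij}$, with all balls disjoint, and all components unknotted outside the balls. The resulting complex is assembled from the $n$ disks-with-tubes and is genus $0$. The main obstacle is precisely this bookkeeping — ensuring that realizing all $\binom{n}{2}$ prescribed linking numbers simultaneously can be done in disjoint balls without forcing higher genus — and I expect the resolution to be the observation that a chain/ring of clasped unknots, or the ``keychain'' configuration of Figure~\ref{figlinkB4} suitably generalized, handles all pairs at once with a manifestly planar spanning complex.
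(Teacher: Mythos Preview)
Your proposal never quite lands on a complete argument. You cycle through Seifert's algorithm (correctly abandoned), tubing disks (correctly noted to fail when the algebraic intersection is nonzero), and pairwise clasps in disjoint balls --- but this last idea cannot work as stated: a single component $K_i$ must link with \emph{every} $K_j$, so it cannot sit in $\binom{n}{2}$ pairwise disjoint balls $B_{ij}$. Your final sentence (``I expect the resolution to be\ldots'') is a hope, not a construction, and the keychain picture of Figure~\ref{figlinkB4} does not obviously generalize to arbitrary prescribed $\ell_{ij}$.

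The missing idea, and the paper's actual proof, is much simpler than anything you attempted: take a \emph{single connected} planar surface $\Sigma$ --- an $n$--holed sphere --- whose boundary is a trivial $n$--component link. For any pair $i<j$, choose a band in $\Sigma$ running from $K_i$ to $K_j$ and insert full twists in it. Twisting a band is an ambient isotopy of the surface that changes $\lk(K_i,K_j)$ by the number of twists while leaving all other pairwise linking numbers untouched, and it does not change the homeomorphism type of $\Sigma$, so $\Sigma$ remains genus~$0$. Doing this independently for every pair realizes any prescribed matrix $(\ell_{ij})$. The point you missed is that working with one connected surface from the start avoids entirely the bookkeeping problem of assembling per-component or per-pair pieces.
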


\begin{proof}
 Take a trivial link $K$ with $n$ components $K_i$. It bounds an embedded genus--$0$ surface~$\Sigma$, compact and connected. For given $i<j$, the linking $\lk(K_i,K_j)$ can be modified as follows. Take a band $B=[0,1]\times[0,1]$ on $\Sigma$ such that $\{0\}\times[0,1]=B\cap K_i$, $\{1\}\times[0,1]=B\cap K_j$ and $(0,1)\times[0,1]\subset\Int(\Sigma)$. Twist the band $B$ around $\left\lbrace\frac12\right\rbrace\times[0,1]$, see Figure~\ref{figTwist}. This adds the number of twists to the linking number $\lk(K_i,K_j)$ without modifying the other linking numbers. 
\end{proof}
\begin{figure} [htb]
\begin{center}
\begin{tikzpicture} [xscale=0.6,yscale=0.3]
\begin{scope}
 \draw[color=gray!35,fill=gray!35,rounded corners=5pt] (0,-3) -- (1,-1) -- (5,-1) -- (6,-3) -- (6,3) -- (5,1) -- (1,1) -- (0,3) --(0,-3);
 \draw[color=gray!35,fill=gray!35] (0,-3) -- (1,-1) -- (5,-1) -- (6,-3) -- (6,3) -- (5,1) -- (1,1) -- (0,3) --(0,-3);
 \draw[rounded corners=5pt] (0,-3) -- (1,-1) -- (5,-1) -- (6,-3) (6,3) -- (5,1) -- (1,1) -- (0,3);
 \draw (0.3,-0.8) node {$\Sigma$};
 \draw[->] (6,3) -- (5.5,2) node[above left] {$K_i$};
 \draw[-<] (6,-3) -- (5.5,-2) node[below left] {$K_j$};
\end{scope}
\draw[->,very thick] (7.5,0) -- (8.5,0);
\begin{scope} [xshift=10cm]
 \draw[color=gray!35,fill=gray!35] (0,3) -- (1.5,0) -- (0,-3) (6,3) -- (4.5,0) -- (6,-3);
 \foreach \x in {0,1,2} {
 \draw[color=gray!35,fill=gray!35,rounded corners=10pt,xshift=\x cm] (1.5,0) -- (2,1) -- (2.5,0) (2.5,0) -- (2,-1) -- (1.5,0);}
 \draw[rounded corners=10pt] (0,-3) -- (1.4,-0.2) (1.6,0.2) -- (2,1) -- (3,-1) -- (3.4,-0.2) (3.6,0.2) -- (4,1) -- (6,-3) (6,3) -- (4.6,0.2) (4.4,-0.2) -- (4,-1) -- (3,1) -- (2.6,0.2) (2.4,-0.2) -- (2,-1) -- (0,3);
\end{scope}
\end{tikzpicture}
\caption{Twisting a band in $\Sigma$} \label{figTwist}
\end{center}
\end{figure}
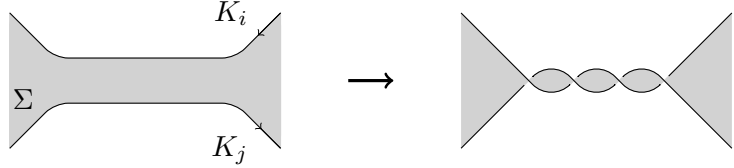

\begin{lemma} \label{lemmaLinkingsB}
 Let $L=\sqcup_{1\leq i\leq\mu}L_i$ be an algebraically $c$--split colored link. There exists an algebraically $c$--split colored link $J=\sqcup_{1\leq i\leq\mu}J_i$ whose knot components have the same pairwise linking numbers as those of $L$ and which bounds a ribbon genus--$0$ colored complex in $S^3$.
\end{lemma}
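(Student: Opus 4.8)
The plan is to imitate the proof of Lemma~\ref{lemmaLinkingsA}, but to also allow a band of one color's surface to wind once around a component of another color; winding a band across another surface introduces a ribbon, which is exactly what preserves the $T$--ribbon condition, while the colors being algebraically $c$--split is what lets these windings come in cancelling pairs.

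Write $L_i=L_{i,1}\sqcup\dots\sqcup L_{i,n_i}$ as a union of knots and set $\ell_{(i,a)(j,b)}=\lk(L_{i,a},L_{j,b})$, so that the hypothesis reads $\sum_{a,b}\ell_{(i,a)(j,b)}=0$ for all $i\neq j$. For each $i$, apply Lemma~\ref{lemmaLinkingsA} inside a ball $V_i\subset S^3$ to produce a link $J^0_i=\sqcup_aJ^0_{i,a}$ with $\lk(J^0_{i,a},J^0_{i,b})=\ell_{(i,a)(i,b)}$ bounding a connected embedded genus--$0$ surface $\Sigma_i\subset V_i$. Choosing the $V_i$ pairwise disjoint, the colored link $J^0=\sqcup_iJ^0_i$ is algebraically $c$--split, realises all the within--color linking numbers, has all linking numbers between distinct colors equal to $0$, and $\Sigma^0=\cup_i\Sigma_i$ is an embedded --- hence $T$--ribbon --- genus--$0$ colored complex for it.

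It remains to install the linking numbers between distinct colors. Fix $i\neq j$. For $a\neq a'$ in $\{1,\dots,n_i\}$ and $b\in\{1,\dots,n_j\}$, start from a band $B\subset\Sigma_i$ with $B\cap J^0_{i,a}=\{0\}\times[0,1]$, $B\cap J^0_{i,a'}=\{1\}\times[0,1]$ and $(0,1)\times[0,1]\subset\Int(\Sigma_i)$, as in Lemma~\ref{lemmaLinkingsA}, and push $B$ --- dragging with it the two small arcs of $J^0_{i,a}$ and $J^0_{i,a'}$ on its ends --- out of $V_i$ along a tube disjoint from the other balls, once around $J^0_{j,b}$, and back. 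The displaced piece of $\Sigma_i$ stays disjoint from the link $J^0_j$, and since $\Sigma_i$ remains disjoint from $J^0_{j,b}$ the quantity $\lk(\partial\Sigma_i,J^0_{j,b})=\sum_{a''}\lk(J^0_{i,a''},J^0_{j,b})$ is unchanged and equal to $0$; as only the arcs of $J^0_{i,a}$ and $J^0_{i,a'}$ have moved, and they wind around $J^0_{j,b}$ in opposite senses, this move adds $+1$ to $\lk(J^0_{i,a},J^0_{j,b})$ and $-1$ to $\lk(J^0_{i,a'},J^0_{j,b})$, leaving all other linking numbers --- in particular all within--color ones --- untouched. The genus of $\Sigma_i$ is unchanged, and the displaced band, which links $J^0_{j,b}$ once, meets $\Sigma_j$ transversally; routing it so that it is thin, transverse to $\Sigma_j$, and disjoint from the singular set of $\Sigma_j$, one checks that the lines of double points so created are ribbons (with the pierced arcs of $J^0_{i,a}$ and $J^0_{i,a'}$ as endpoints of their $b$--lines in $\Sigma_i$), no triple or branch point being produced. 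The symmetric move --- pushing a band of $\Sigma_j$ once around a component $J^0_{i,a}$ --- changes $\lk(J^0_{i,a},J^0_{j,b})$ and $\lk(J^0_{i,a},J^0_{j,b'})$ by $+1$ and $-1$.

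To conclude: for fixed $i\neq j$, the matrix $\big(\ell_{(i,a)(j,b)}\big)_{a,b}$ has vanishing total sum, hence is an integral combination of the elementary matrices $e_{a,b}-e_{a',b}$ and $e_{a,b}-e_{a,b'}$; carrying out the corresponding band pushes with pairwise disjoint supports over all pairs of colors transforms $J^0$ into a colored link $J$ whose knot components have exactly the pairwise linking numbers of those of $L$ --- so $J$ is algebraically $c$--split --- and which bounds a $T$--ribbon genus--$0$ colored complex. The one genuinely delicate point is the parenthetical claim in the previous paragraph, namely that a band winding once around a component of another color can be arranged to cross that color's surface along ribbons only (not along a clasp, and without creating a triple point); once this local model is in place, the linking--number bookkeeping and the arrangement of disjoint supports are routine.
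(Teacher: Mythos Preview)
Your proof is correct and follows essentially the same strategy as the paper: build each $\Sigma_i$ via Lemma~\ref{lemmaLinkingsA} in disjoint balls, then install the cross-color linking numbers by winding bands of one surface around components of another, observing that this creates only ribbon intersections. The paper organizes the bookkeeping slightly differently --- it fixes a ``last'' component $J_{ik_i}$ in each color as a buffer and sets the remaining linking numbers directly, letting the $c$--split condition force the final one --- whereas you phrase it as decomposing a zero-sum matrix into elementary differences $e_{a,b}-e_{a',b}$ and $e_{a,b}-e_{a,b'}$; these are equivalent.
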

\begin{proof}
 First, thanks to Lemma~\ref{lemmaLinkingsA}, define $J$ as the split union of links $J_i$ that realize the pairwise linking numbers of the $L_i$ and bound embedded genus--$0$ connected surfaces $\Sigma_i$ in $S^3$. Write $J_i$ as the disjoint union of knots $J_i=\sqcup_{1\leq \ell\leq k_i}J_{i\ell}$. Fix $i,j,\ell,m$ such that $1\leq i<j\leq\mu$, $1\leq\ell\leq k_i$ and $1\leq m<k_j$. Take a band in $\Sigma_j$ joining $J_{jm}$ to $J_{jk_j}$. Link this band around $J_{i\ell}$ in order to realize the desired linking $\lk(J_{i\ell},J_{jm})$, see Figure~\ref{figLink}. This adds ribbon intersections on the complex $\cup_{i=1}^\mu\Sigma_i$. Then, for $\ell<k_i$, realize the linking $\lk(J_{i\ell},J_{jk_j})$ using a band on $\Sigma_i$ joining $J_{i\ell}$ to $J_{ik_i}$. We have realized all the linking numbers $\lk(J_{i\ell},J_{jm})$ where $\ell<k_i$ or $m<k_j$. Since $J$ remains algebraically $c$--split, $\lk(J_{ik_i},J_{jk_j})$ is determined by these linking numbers, so that all linking numbers are finally realized.
\end{proof}

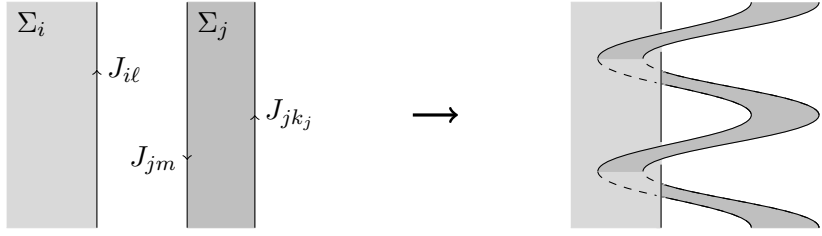
\begin{figure}[htb]
\begin{center}
\begin{tikzpicture} [xscale=0.6,yscale=0.3]
\begin{scope}
 \draw[color=gray!30,fill=gray!30] (0,0) -- (2,0) -- (2,10) -- (0,10);
 \draw (0,10) node[below right] {$\Sigma_i$};
 \draw[->] (2,10) -- (2,7) (2,0) -- (2,7) node[right] {$J_{i\ell}$};
 \draw[color=gray!50,fill=gray!50] (4,0) -- (5.5,0) -- (5.5,10) -- (4,10);
 \draw (4,10) node[below right] {$\Sigma_j$};
 \draw[->] (4,0) -- (4,3) (4,10) -- (4,3) node[left] {$J_{jm}$};
 \draw[->] (5.5,10) -- (5.5,5) (5.5,0) -- (5.5,5) node[right] {$J_{jk_j}$};
\end{scope}
\draw[->,very thick] (9,5) -- (10,5);
\begin{scope} [xshift=12.5cm]
 \draw[color=gray!50,fill=gray!50] (0.6,7.5) .. controls +(0,-1) and +(0,1) .. (4,5) -- (5.5,5) .. controls +(0,1) and +(0,-1) .. (1.6,7.5) 
 (0.6,2.5) .. controls +(0,-1) and +(0,1) .. (4,0) -- (5.5,0) .. controls +(0,1) and +(0,-1) .. (1.6,2.5);
 \draw (0.6,7.5) .. controls +(0,-1) and +(0,1) .. (4,5) (5.5,5) .. controls +(0,1) and +(0,-1) .. (1.6,7.5) 
 (0.6,2.5) .. controls +(0,-1) and +(0,1) .. (4,0) (5.5,0) .. controls +(0,1) and +(0,-1) .. (1.6,2.5);
 \draw[color=gray!30,fill=gray!30] (0,0) -- (2,0) -- (2,10) -- (0,10);
 \draw[dashed] (0.6,7.5) .. controls +(0,-1) and +(0,1) .. (4,5) (5.5,5) .. controls +(0,1) and +(0,-1) .. (1.6,7.5) 
 (0.6,2.5) .. controls +(0,-1) and +(0,1) .. (4,0) (5.5,0) .. controls +(0,1) and +(0,-1) .. (1.6,2.5);
 \draw[gray!50] (2,1.383) -- (2.1,1.336) (2,1.955) -- (2.1,1.89) (2,6.383) -- (2.1,6.336) (2,6.955) -- (2.1,6.89);
 \draw[gray!30,fill=gray!30] (1.9,0) -- (2,0) -- (2,10) -- (1.9,10);
 \draw (2,0) -- (2,2.8) (2,3.8) -- (2,7.8) (2,8.8) -- (2,10);
 \draw[color=gray!50,fill=gray!50] (4,10) .. controls +(0,-1) and +(0,1) .. (0.6,7.5) -- (1.6,7.5) .. controls +(0,1) and +(0,-1) .. (5.5,10) 
 (4,5) .. controls +(0,-1) and +(0,1) .. (0.6,2.5) -- (1.6,2.5) .. controls +(0,1) and +(0,-1) .. (5.5,5);
 \draw (4,10) .. controls +(0,-1) and +(0,1) .. (0.6,7.5) (1.6,7.5) .. controls +(0,1) and +(0,-1) .. (5.5,10) 
 (4,5) .. controls +(0,-1) and +(0,1) .. (0.6,2.5) (1.6,2.5) .. controls +(0,1) and +(0,-1) .. (5.5,5);
\end{scope}
\end{tikzpicture}
\caption{Linking a band in $\Sigma_j$ around a component of $J_i$} \label{figLink}
\end{center}
\end{figure}

\begin{proposition} \label{propAlgTrivial}
 Any algebraically $c$--split colored link bounds a $T$--ribbon genus--$0$ colored \mbox{complex.}
\end{proposition}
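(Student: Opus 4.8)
The plan is to follow the strategy announced before Lemma~\ref{lemmaLinkingsA}, combining Lemma~\ref{lemmaLinkingsB} with the Murakami--Nakanishi Theorem~\ref{thMuNa}. First I would apply Lemma~\ref{lemmaLinkingsB} to obtain an algebraically $c$--split colored link $J=\sqcup_{1\leq i\leq\mu}J_i$ whose knot components have the same pairwise linking numbers as those of $L$ and which bounds a $T$--ribbon genus--$0$ colored complex $\Sigma$. Writing $L_i=\sqcup_{\ell}L_{i\ell}$ and $J_i=\sqcup_{\ell}J_{i\ell}$ as disjoint unions of knots, I label the components of $J$ so that the identification $J_{i\ell}\leftrightarrow L_{i\ell}$ preserves all pairwise linking numbers. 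Then $L$ and $J$ have the same number of knot components and the same pairwise linking numbers, so Theorem~\ref{thMuNa} (in the form that an identification of components preserving all pairwise linking numbers can be realized by $\Delta$--moves) yields a finite sequence of $\Delta$--moves carrying $J$ to $L$ compatibly with this identification, hence with the partition into the $\mu$ colors.

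Next I would realize each of these $\Delta$--moves by gluing a borromean link, exactly as in the proof of Theorem~\ref{thDeltaDistance}: such a move attaches to $\Sigma$, by three bands, a complex of three disks meeting along three ribbons at a single borromean triple point, and in general position the bands produce only new ribbon lines --- no clasp, no branch point, no non-borromean triple point. I assign to each of the three new disks the color of the strand of the ambient link onto which it is attached; since a $\Delta$--move alters neither the number of components nor their identification, this produces again a colored complex, now carrying one more borromean triple point. The genus is not increased: each new disk is attached by a single band, contributing a boundary connected sum with a disk, under which a genus--$0$ surface stays genus--$0$, while the mutual intersections of the three new disks and the intersections of the bands with $\Sigma$ do not affect the underlying abstract surface. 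Iterating over the whole sequence transforms $\Sigma$ into a $T$--ribbon genus--$0$ colored complex bounded by $L$.

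The main points to be careful about concern the colored bookkeeping, in two places. One is the component identification: the sequence of $\Delta$--moves must be compatible with the labelling $J_{i\ell}\leftrightarrow L_{i\ell}$, which is why one arranges this labelling to preserve pairwise linking numbers before invoking Theorem~\ref{thMuNa}. The other is when a single $\Delta$--move is performed on two or three strands lying on the same component, or on components sharing a color: then several of the three new disks get attached to the same surface $\Sigma_i$, and one must check that the result is still a genus--$0$, $T$--ribbon, consistently colored complex. This holds because each such disk is attached by its own band, so it contributes nothing to the genus and inherits the color $i$, while all new double point lines are ribbons and the only new triple point is borromean. I expect this verification to be the principal --- though essentially routine --- obstacle; everything else is a direct combination of Lemma~\ref{lemmaLinkingsB}, Theorem~\ref{thMuNa}, and the gluing construction used in the proof of Theorem~\ref{thDeltaDistance}.
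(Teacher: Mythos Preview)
Your proposal is correct and follows essentially the same route as the paper: apply Lemma~\ref{lemmaLinkingsB} to get $J$, use Theorem~\ref{thMuNa} to connect $J$ to $L$ by $\Delta$--moves, and realize each move by gluing a borromean disks complex as in the proof of Theorem~\ref{thDeltaDistance}. The paper's argument is considerably terser and does not spell out the component-identification or same-color issues you flag, but these are exactly the routine verifications you describe.
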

\begin{proof}
 Let $L$ be an algebraically $c$--split colored link. Lemma~\ref{lemmaLinkingsB} provides another algebraically $c$--split colored link $J$ with the same pairwise linking numbers, that bounds a $T$--ribbon genus--$0$ colored complex $\Sigma$. Theorem~\ref{thMuNa} says that $L$ can be obtained from $J$ by a sequence of $\Delta$--moves. Realize these $\Delta$--moves by gluing borromean links to $L$ and associated $T$--ribbon disks complexes to $\Sigma$. This provides a ribbon disks complex for $L$.
\end{proof}

We consider a colored version of the invariants studied above, namely we define these invariants from colored complexes, and we add a superscript $c$ to distinguish them from the non-colored version. It follows from Proposition~\ref{propAlgTrivial} that the slice and ribbon genera and the $T$--genera of an algebraically $c$--split colored link are well-defined. Most of the results we have seen remain true in the colored setting, with the same proof. We collect them in the next statement.

A {colored cobordism} from a link $L$ to a link $L'$ is a disjoint union in $S^3\times[0,1]$ of genus--$0$ cobordisms from the colored sublinks of $L$ to the colored sublinks of $L'$. In this definition, some sublinks of the colored links may be empty. 

\begin{theorem}
 For any algebraically $c$--split colored link $L$,
\begin{itemize}
 \item the colored slice genus of $L$ equals the minimal genus of a marked normal singular colored complex for $L$ with no clasp intersection and no borromean triple point,
 \item the $T$--genus of $L$ is the smallest integer~$b$ such that there is a colored cobordism from $L$ to a split union of $b$ borromean links with any coloring,
 \item $T_s^c(L)=\sd^c(L)$ and $T_r^c(L)=\rd^c(L)$,
 \item $g_s^c(L)\leq T_s^c(L)$ and $g_r^c(L)\leq T_r^c(L)$,
 \item $c_4^{b,c}(L)\leq T_s^c(L)$.
\end{itemize}
\end{theorem}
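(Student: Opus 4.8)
The plan is to observe that each of the five bullet points is the colored analogue of a statement already proved in the non-colored setting, and that the proof of the non-colored statement only ever uses (a) the existence of disks/surfaces bounded by the relevant pieces of the link and (b) local surgery moves on surfaces (band slides, radius-function modifications, gluing of borromean complexes, $\Delta$-moves) — none of which interact with how the components are partitioned into colors. So the strategy is to go through the proofs of Corollary~\ref{corCharSliceGenus}, Corollary~\ref{corcobborro}, Theorem~\ref{thDeltaDistance}, Corollary~\ref{corgenusTgenus} and Corollary~\ref{corTgenusclaspnb} in turn and point out that each manipulation respects the coloring, replacing ``component'' by ``colored sublink'' and ``complex'' by ``colored complex'' throughout. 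The one genuinely new input needed at the outset is Proposition~\ref{propAlgTrivial}, which guarantees that the invariants in question are all finite (well-defined) for an algebraically $c$--split colored link; this has already been established, so I would simply cite it.

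\textbf{Bullet by bullet.} For the first item, I would rerun the proof of Theorem~\ref{thprojection} and Proposition~\ref{propborrotoclasp}: given a properly embedded colored surface in $B^4$, its radial projection after a Morse-type reorganization of the radius function produces a marked normal singular complex with no clasp and no borromean triple point, and this construction is carried out componentwise on the colored pieces $S_i$, hence yields a colored complex; conversely the radius-function construction of Proposition~\ref{propborrotoclasp} pushes a colored marked normal singular complex with no borromean triple point into an embedded colored surface in $B^4$. This gives $g_s^c(L)$ equal to the stated minimal genus. For the second item, the cobordism constructions of Proposition~\ref{propcob} and Theorem~\ref{thconcordance} are local near the singular set and near $\partial\Sigma$, so they take colored complexes to colored cobordisms; the borromean links one splits off inherit whatever coloring the bands connecting them carry, which is exactly why the statement says ``with any coloring''. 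For the third item, the two inequalities $T_s^c\le\sd^c$ and $T_r^c\le\rd^c$ come from realizing $\Delta$--moves by gluing borromean complexes as in the proof of Theorem~\ref{thDeltaDistance}, each glued band being attached to a single colored sublink; the reverse inequalities come from the band-sum trick (Figure~\ref{figBorroToTrivial}) and the boundary-sliding argument (Figure~\ref{figSlideBoundary}, Figure~\ref{figRemoveTriplePoint}), both of which only slide arcs along the surface and so preserve the coloring of the boundary. The fourth item is then immediate from the third together with the genus--$1$ (ribbon) complex for the borromean link in Figure~\ref{figSeifertBorro}, glued colored-sublink-wise. The fifth item follows from Proposition~\ref{propborrotoclasp} applied to the colored disks complex, turning $b$ borromean triple points into $b$ positive and $b$ negative double points, which gives an immersed colored disks complex in $B^4$ with vanishing self-intersection number and $b$ positive double points.

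\textbf{Main obstacle.} I do not expect any deep obstacle; the content is a bookkeeping verification that coloring is preserved. The one point that deserves genuine care is the band-sum reduction step inside the proof of Theorem~\ref{thDeltaDistance}: there one must check that when the connecting bands of the auxiliary borromean link $B$ are slid back onto $L$, they can be slid onto a \emph{single} colored sublink of $L$, so that the resulting link $L\sharp B$ is still a link with the same coloring pattern and $\sd^c$ decreases by one. Tracing through the argument, the bands are glued to the link $J$ obtained from $L$ by births and fusions along colored sublinks, so each band lies on one colored piece and can be slid within that piece; this is what makes the colored version of $\sd^c(L)\le T_s^c(L)$ go through. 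The analogous check for the $\Delta$--ribbonning number uses the disk-isolation move of Figure~\ref{figRemoveTriplePoint}, which again lives entirely on one colored component of the $T$--ribbon complex. With these remarks in place, the proof is ``the same as in the non-colored case, carried out componentwise on the colored sublinks, using Proposition~\ref{propAlgTrivial} for well-definedness.''
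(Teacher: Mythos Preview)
Your proposal is correct and follows essentially the same route as the paper: the paper's proof simply cites Theorem~\ref{thprojection}, Theorem~\ref{thconcordance}, Proposition~\ref{propcob}, Theorem~\ref{thDeltaDistance}, and Proposition~\ref{propborrotoclasp} in turn and asserts that each argument carries over verbatim to the colored setting. You have spelled out the bookkeeping (in particular the band-sliding step in the $\sd^c(L)\le T_s^c(L)$ direction) in more detail than the paper does, but the strategy and the ingredients are the same.
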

\begin{proof}
 The first point is a corollary of Theorem~\ref{thprojection}.  
 The second point follows from Theorem~\ref{thconcordance} and Proposition~\ref{propcob}. 
 The proof of Theorem~\ref{thDeltaDistance} works in the colored setting and gives the third point; the fourth is a corollary of it. Finally the fifth point is a corollary of Proposition~\ref{propborrotoclasp}.
\end{proof}

Let $L$ and $L'$ be colored links. A {\em colored concordance} from $L$ to $L'$ is a disjoint union of concordances between the sublinks of $L$ and $L'$. Note that the relations in the next result are also satisfied by the slice genus.

\begin{theorem} 
 Let $L$ and $J$ be algebraically $c$--split colored links. Let $\widehat J$ be the colored link with the same underlying link as $J$ and a different color for each knot component. 
 \begin{itemize}
  \item If $L$ and $J$ are related by a colored cobordism, then $T_s(L)\leq T_s(\widehat J)$.
  \item If $L$ and $J$ are related by a colored concordance, then $T_s(L)=T_s(J)$.
 \end{itemize}
\end{theorem}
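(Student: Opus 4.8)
The plan is to run, color by color, the ``cut-and-project'' argument already used for Theorem~\ref{thconcordance}: put a defining complex of the ``target'' link on top of the cobordism or concordance in $S^3\times[0,1]$, project radially to $S^3$, and keep track of the genus of each colored piece and of the number of borromean triple points.

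For the first item I would first discard the trivial case $T_s(\widehat J)=\infty$, so assume $b:=T_s(\widehat J)<\infty$ and fix a marked normal singular disks complex $D$ for $\widehat J$ with no clasp and exactly $b$ borromean triple points. The point of passing to $\widehat J$ is that, its coloring being the finest one, $D$ is a genuine disjoint union of (possibly singular) disks, one per knot component $J_{i,j}$ of $J$. Let $C\subset S^3\times[0,1]$ be a colored cobordism relating $L$ and $J$; by definition its color-$i$ part $C_i$ is a genus-$0$ cobordism from the colored sublink $L_i$ to $J_i=\sqcup_j J_{i,j}$, and the $C_i$ are pairwise disjoint. Reading $C$ as a surface $S$ with $\partial S\cap(S^3\times\{1\})=-\partial D\times\{1\}$ (reverse the $[0,1]$-coordinate), Theorem~\ref{thconcordance}, applied in its colored form, produces after an isotopy fixing $\partial S$ a marked normal singular complex $\Sigma$ for $L$ with no clasp and exactly $b$ borromean triple points, namely the radial projection of $S\cup(D\times\{1\})$. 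Since $C$ respects colors and $D$ splits along knot components, $\Sigma$ decomposes as $\cup_i \Sigma_i$, where $\Sigma_i$ is $C_i$ capped off, along each of its ``$J$-side'' boundary circles, by the corresponding disk of $D$. An Euler-characteristic count shows that capping a genus-$0$ cobordism by a disk along each such circle yields a genus-$0$ surface; hence $\Sigma_i$ has genus $0$ with $\partial\Sigma_i=L_i$, so $\Sigma$ is a marked normal singular colored disks complex for $L$ with no clasp and $b$ borromean triple points, and $T_s(L)\leq b$.

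For the second item, a colored concordance relating $L$ and $J$ matches, color by color and annulus by annulus, the knot components of $L$ with those of $J$, and such concordances are symmetric and composable. I would assume $b:=T_s(J)<\infty$, pick a marked normal singular colored disks complex $D=\cup_i D_i$ for $J$ (here $D_i$ is a genus-$0$ surface bounding the whole sublink $J_i$) with no clasp and $b$ borromean triple points, and repeat the argument with the concordance in place of the cobordism: each $D_i$ is merely extended by annuli realizing the concordance, and attaching an annulus along a boundary circle does not change the homeomorphism type, so each $\Sigma_i$ is again genus $0$ with $\partial\Sigma_i=L_i$. Theorem~\ref{thconcordance} again gives no clasp and exactly $b$ borromean triple points, so $T_s(L)\leq T_s(J)$, and applying this to the reversed concordance yields the reverse inequality, hence equality.

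The crux is the genus bookkeeping, which is precisely why the first statement must be phrased with $\widehat J$: a colored cobordism is only genus $0$ \emph{color by color}, so gluing it to a colored disks complex whose pieces are multi-boundary planar surfaces can create handles whenever the cobordism merges or splits knot components within a single color, whereas capping along the individual boundary circles of a disjoint union of disks never raises the genus. For a colored concordance the within-color component structure is preserved, this obstruction disappears, and one recovers the sharp equality with $J$ itself. The remaining verifications --- that Theorem~\ref{thconcordance} transports the coloring, and that any closed components created by the capping can be deleted without changing the borromean triple point count --- are routine.
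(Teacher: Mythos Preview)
Your argument is correct and follows essentially the same route as the paper: glue a realizing complex for the target link to the given cobordism/concordance, project via Theorem~\ref{thconcordance}, and check that the result is a genus--$0$ colored complex for $L$ with the right bound on borromean triple points; for concordance, symmetry gives equality. Your write-up is more explicit than the paper's (which is quite terse) about invoking Theorem~\ref{thconcordance}, about the Euler-characteristic bookkeeping, and about why $\widehat J$ is needed in the first item; the only imprecision is the claim that deleting closed components leaves the borromean count unchanged---it can only \emph{decrease} it, but that is all you need.
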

\begin{proof}
 First assume $L$ and $J$ are cobordant and define $\Sigma$ as the union of a cobordism from $L$ to $J$ with a marked normal singular disks complex $S$ for $\widehat J$ that realizes $T_s(\widehat J)$. Since $S$ is made of disks, after removing closed components if necessary, $\Sigma$ is a marked normal singular genus--$0$ complex for $L$ with at most $T_s(\widehat J)$ borromean triple points.
 
 Now assume $L$ and $J$ are concordant and do the same with a concordance from $L$ to $J$ and a marked normal singular genus--$0$ complex $S$ for $J$ that realizes $T_s(J)$. Once again, $\Sigma$ has genus~$0$, so that $T_s(L)\leq T_s(J)$. Similarly $T_s(J)\leq T_s(L)$.
\end{proof}

\begin{figure}[htb]
\begin{center}
\begin{tikzpicture} [scale=0.7]
\draw[rounded corners=20pt] (1,0) -- (1,1.2) -- (-1,1.2) -- (-1,-1.2) -- (0,-1.2);
\begin{scope} [yscale=0.8]
 \draww{(1,-1) arc (270:90:1) .. controls +(1,0) and +(-1,0) .. (3,-1);}
 \draww{ (3,-1) arc (-90:90:1) .. controls +(-1,0) and +(1,0) .. (1,-1);}
\end{scope}
\draww{[rounded corners=20pt] (0,-1.2) -- (1,-1.2) -- (1,0);}
\draw[very thick,->] (5,0) -- (6,0) node[above] {$\Delta$} -- (7,0);
\draw[rounded corners=20pt] (9,1.2) -- (8,1.2) -- (8,-1.2) -- (12,-1.2) -- (12,0);
\begin{scope} [xshift=9cm,yscale=0.8]
 \draww{ (1,-1) arc (270:90:1) .. controls +(1,0) and +(-1,0) .. (3,-1);}
 \draww{(3,-1) arc (-90:90:1) .. controls +(-1,0) and +(1,0) .. (1,-1);}
\end{scope}
\draww{[rounded corners=20pt] (12,0) -- (12,1.2) -- (9,1.2);}
\end{tikzpicture}
\caption{A $\Delta$--move on a Hopf link}  \label{figHopf}
\end{center}
\end{figure}
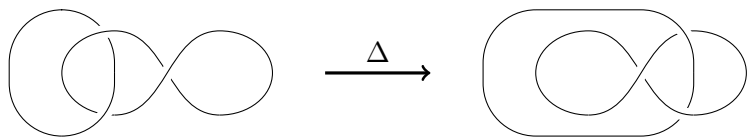

We end with a point which does not generalize to colored links. In the non-colored setting, the parity of the number of borromean triple points on a marked normal singular complex is fixed, it only depends on the link. A consequence of this fact is that a $\Delta$--move performed on an algebraically split link always modifies the link. Figure~\ref{figHopf} shows a $\Delta$--move on the Hopf link that leaves it unchanged. It follows that a colored Hopf link with a single color bounds $T$--ribbon genus--$0$ complexes with any number of borromean triple points.

\def\cprime{$'$}
\providecommand{\bysame}{\leavevmode ---\ }
\providecommand{\og}{``}
\providecommand{\fg}{''}
\providecommand{\smfandname}{\&}
\providecommand{\smfedsname}{\'eds.}
\providecommand{\smfedname}{\'ed.}
\providecommand{\smfmastersthesisname}{M\'emoire}
\providecommand{\smfphdthesisname}{Th\`ese}

\end{document}